\definecolor{darkgreen}{rgb}{0,0.45,0}
\DeclareFontFamily{U}{mathx}{\hyphenchar\font45}
\DeclareFontShape{U}{mathx}{m}{n}{
	<5> <6> <7> <8> <9> <10>
	<10.95> <12> <14.4> <17.28> <20.74> <24.88>
	mathx10
}{}
\DeclareSymbolFont{mathx}{U}{mathx}{m}{n}
\DeclareMathAccent{\widecheck}{0}{mathx}{"71}
\DeclareMathAccent{\wideparen}{0}{mathx}{"75}
\newcommand\bA{\mathbb{A}}
\newcommand\bI{\mathbb{I}}
\newcommand\bR{\mathbb{R}}
\newcommand\bC{\mathbb{C}}
\newcommand\bZ{\mathbb{Z}}
\newcommand\bQ{\mathbb{Q}}
\newcommand\bF{\mathbb{F}}
\newcommand\bT{\mathbb{T}}
\newcommand\Zp{{\mathbb{Z}_p}}
\newcommand\Qp{{\mathbb{Q}_p}}
\newcommand\cE{\mathcal{E}}
\newcommand\cH{\mathcal{H}}
\newcommand\cK{\mathcal{K}}
\newcommand\cL{\mathcal{L}}
\newcommand\cM{\mathcal{M}}
\newcommand\cO{\mathcal{O}}
\newcommand\cT{\mathcal{T}}
\newcommand\cX{\mathcal{X}}
\newcommand\ga{\mathfrak{a}}
\newcommand\gh{\mathfrak{h}}
\newcommand\gm{\mathfrak{m}}
\newcommand\sH{\mathscr{H}}
\newcommand{\ob}[1]{\mkern 1.5mu\overline{\mkern-1.5mu#1\mkern-1.5mu}\mkern 1.5mu}
\newcommand{\Ind}{\mathrm{Ind}}
\renewcommand{\char}{\textrm{char}}
\DeclareMathOperator{\Gal}{Gal}
\DeclareMathOperator{\Der}{Der}
\DeclareMathOperator{\id}{id}
\DeclareMathOperator{\Hom}{Hom}
\DeclareMathOperator{\im}{im}
\DeclareMathOperator{\coker}{coker}
\DeclareMathOperator{\Frac}{Frac}
\DeclareMathOperator{\ad}{ad}
\DeclareMathOperator{\tr}{tr}
\DeclareMathOperator{\End}{End}
\DeclareMathOperator{\cond}{cond}
\DeclareMathOperator{\GL}{GL}
\DeclareMathOperator{\Sel}{Sel}
\DeclareMathOperator{\Gr}{Gr}
\DeclareMathOperator{\diff}{d}
\DeclareMathOperator{\Spec}{Spec}
\DeclareMathOperator{\corank}{corank}
\newcommand{\HH}{\mathrm{H}}
\newcommand{\ur}{\mathrm{ur}}
\DeclareSymbolFont{cyrletters}{OT2}{wncyr}{m}{n}
\DeclareMathSymbol{\Sha}{\mathalpha}{cyrletters}{"58}
\theoremstyle{defstyle}
\newtheorem{definition}[subsubsection]{Definition}
\newtheorem{remark}[subsubsection]{Remark}
\theoremstyle{thmstyle}
\newtheorem{theorem}[subsubsection]{Theorem}
\newtheorem{proposition}[subsubsection]{Proposition}
\newtheorem{lemma}[subsubsection]{Lemma}
\newtheorem*{conjecture*}{Conjecture}
\newtheorem{lemmaAppendix}{Lemma}
\newtheorem{propAppendix}{Proposition}
\newtheorem{corAppendix}{Corollary}
\newtheorem{THM}{Theorem}
\tikzset{
	symbol/.style={
		draw=none,
		every to/.append style={
			edge node={node [sloped, allow upside down, auto=false]{$#1$}}}
	}
}
\author{Alexandre Maksoud}
\address{Universität Paderborn, Warburger Str. 100, 33098 Paderborn, Germany}
\email{maksoud.alexandre@gmail.com}
\urladdr{https://sites.google.com/view/alexandre-maksoud/}
\subjclass[2020]{11R27.}
\keywords{$p$-adic $L$-functions, Modular forms, Selmer groups}
\title{A canonical generator for congruence ideals of Hida families}
\begin{document}
\maketitle
\begin{abstract}
	We construct canonical adjoint $p$-adic $L$-functions generating the congruence ideal attached to Hida families using Ohta's pairing. We show that these $p$-adic $L$-functions, suitably modified by certain Euler factors, are interpolated by a regular element of Hida's universal ordinary Hecke algebra. We also relate them to characteristic series of primitive adjoint Selmer groups.
\end{abstract}
\section{Introduction}
In the context of $L$-functions, the notion of period often refers to Deligne's period for the value of a motivic $L$-function at a critical integer. When the $L$-function has an automorphic origin, its special values can sometimes be related to automorphic periods. Consider an elliptic cuspidal newform $f$ of weight $k\geq 2$ and level $N$, and let $L(\ad f,s)$ be the $L$-function attached to the adjoint lift of the automorphic representation of $\GL_2(\bA)$ generated by $f$. A theorem of Sturm \cite{sturm1980special} implies that the ratio
\begin{equation}\label{eq:intro_symm_square}
	\dfrac{L(\ad f,j)}{\pi^{2j+k-1}\langle f,f\rangle}
\end{equation}
is an algebraic number for all odd integers $1\leq j \leq k-1$, where $\langle f,f\rangle$ is the Petersson norm of $f$ for $\Gamma_1(N)$. The same automorphic period $\langle f,f\rangle$ is involved in algebraicity statements on special values of the Rankin-Selberg $L$-function $L(f\times g,s)$, where $g$ is another newform of weight smaller than $k$.

Let $p\geq 5$ be a prime number. When $f$ is $p$-ordinary, Schmidt \cite{schmidt} constructed a $p$-adic $L$-function $\cL_p(\ad f,s)$ interpolating (up to simple fudge factors) quantities like \eqref{eq:intro_symm_square} or twisted variants thereof. Hida showed in \cite{hidaGL3} that this construction could be realized for the primitive Hida family $F$ of $f$, yielding a two-variable $p$-adic $L$-function $\cL_p(\ad F,\kappa,s)$, $\kappa$ being the weight variable. 
However, Schmidt-Hida's construction does not quite fit into Coates-Perrin-Riou's framework of $p$-adic $L$-functions of motives developed in \cite{coatesperrinriou}. Indeed, the motivic periods for $\ad f$ and its critical twists should involve the product $\Omega_f^+\Omega_f^-$ of $p$-normalized Shimura periods of $f$ rather than $\langle f,f \rangle$. 
This problem of normalizations of periods is perhaps better directly seen on \eqref{eq:intro_symm_square} for $j=1$, which turns out to be an explicit number depending on $f$ in an elementary way and does not seem to capture deep algebraic invariants such as the size of a Bloch-Kato Selmer group.
This observation suggests that $\cL_p(\ad F,\kappa,s)$ is the ratio of two ''genuine`` (or ''motivic``) $p$-adic $L$-functions $L_p(\ad F,\kappa,s)$ and $L_p(\ad F,\kappa)$ interpolating (up to simple fudge factors)
	\begin{equation}\label{eq:intro_ratio_of_periods}
		\dfrac{L(\ad f,j)}{\pi^{2j+k-1}\Omega_f^+\Omega_f^-} \quad \mbox{and} \quad	\dfrac{\langle f,f\rangle}{\Omega_f^+\Omega_f^-}
	\end{equation}
respectively.
The second one should also generate the congruence ideal of $F$ in most cases. By Hida's formula mentioned above, $\pi^{k+1}\langle f,f\rangle$ is essentially equal to $L(\ad f,1)$, and it is therefore reasonable to call $L_p(\ad F,\kappa)$ the (weight-variable) adjoint $p$-adic $L$-function of $F$. 
These expectations are not new; a similar picture for Rankin-Selberg $L$-series and their $p$-adic counterparts is drawn by Hida in his monograph \cite{hidagenuine} where many evidences are provided in the CM case.

This article focuses on the construction of $L_p(\ad F,\kappa)$ and the study of its arithmetic properties. 
Hida  \cite{hidamodules} gave first evidences for the existence of $L_p(\ad F,\kappa)$ by showing in certain cases that any generator of the congruence ideal of $F$, evaluated at $f$, is essentially equal to the second ratio of \eqref{eq:intro_ratio_of_periods} (see also \cite[Corollary 6.29]{hidapune}).
A construction of an adjoint $p$-adic $L$-function (or, rather, an invertible sheaf) on the Eigencurve of tame level $N$ was carried out by Bella\"iche \cite{eigenbook} using Kim's scalar product on overconvergent modular symbols.
Its vanishing locus is intimately related to the ramification locus of the weight map, which constitutes a geometric counterpart to the support of the congruence module.
Note that Bella\"iche's interpolation formula is only given at crystalline points of trivial Nebentype. Locally on the Eigencurve, the Shimura periods can be normalized in a coherent way at all arithmetic points, with a controllable $p$-adic error term. This approach has been generalized to other settings in recent works by Balasubramanyam-Longo, Lee, Wu and Lee-Wu \cite{baskarlongo,lee2021padic,wusiegel,leewu}.

Going back to the ordinary setting, we now set up some notations and we state our main results. 

Let $\ob{\bQ}_p$ be an algebraic closure of $\Qp$ and fix an isomorphism $\bC\simeq \ob{\bQ}_p$. Let $\Lambda=\Zp[[1+p\Zp]]$ be the Iwasawa algebra. Assume $F$ is a primitive Hida family of tame level $N$ with coefficients in an integrally closed finite flat $\Lambda$-algebra $\bI$. Denote by $\ob{\rho}_F$ residual $p$-ordinary Galois representation over $G_{\bQ}=\Gal(\ob{\bQ}/\bQ)$ attached to $F$ and assume the following condition:
\begin{equation} \tag{CR}\label{tag:cr}
	\ob{\rho}_F \textrm{ is absolutely irreducible and $p$-distinguished.}
\end{equation}
Here, $\ob{\rho}_F$ is called $p$-distinguished if the semi-simplification of its restriction to $G_{\bQ_p}=\Gal(\ob{\bQ}_p/\Qp)$ is non-scalar.

\begin{THM}[=Theorem \ref{thm:calcul_valeurs_speciales}]\label{thm:introduction_main_1}
	Assume $\ob{\rho}_F$ satisfies \eqref{tag:cr}. There exists a generator $L_p(\ad F)\in\bI$ of the congruence ideal of $F$ satisfying the following interpolation property. Let $f$ be an arithmetic specialization of $F$ of weight $k\geq 2$ and level $Np^r$, $r\geq 1$, and denote by $f^\circ$ its associated newform. Then the specialization $L_p(\ad f)$ of $L_p(\ad F)$ at $f$ is given by
	\begin{equation*}
		L_p(\ad f)=p^{r-1}\alpha^r w(f^\circ)\cE_p(\ad f^\circ)\cdot (-2i)^{k} \dfrac{\langle f^\circ,f^\circ\rangle}{\Omega^+_{f^\circ}\Omega^-_{f^\circ}}\in\ob{\bQ}_p.
	\end{equation*}
	In the above formula, $\alpha$ and $\beta$ are the roots of the $p$-th Hecke polynomial of $f^\circ$ with $|\alpha|_p=1$, $w(f^\circ)\in \ob{\bQ}^\times$ is the Atkin-Lehner pseudo-eigenvalue of $f^\circ$, and $\cE_p(\ad f^\circ)$ is a modified Euler factor at $p$ given by $\cE_p(\ad f^\circ)=1$ if $f^\circ= f$ and by \[\cE_p(\ad f^\circ)=\alpha(p-1)(1-\frac{\beta}{\alpha})(1-p^{-1}\frac{\beta}{\alpha}) \]
	if $f^\circ\neq f$.
\end{THM}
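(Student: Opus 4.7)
The plan is to realise $L_p(\ad F)$ using Ohta's $\Lambda$-adic perfect pairing on the ordinary étale cohomology of the modular tower $\{X_1(Np^r)\}_{r\geq 1}$. Let $T_F$ denote the component cut out by localising Hida's universal ordinary Hecke algebra $\gh$ at the maximal ideal attached to $\ob\rho_F$ and then projecting to $\bI$; under \eqref{tag:cr}, $\gh_\gm$ is Gorenstein and $T_F$ is free of rank $2$ over $\bI$, with its $\pm 1$-eigenspaces $T_F^\pm$ under complex conjugation each free of rank $1$. Ohta's pairing, suitably twisted by an Atkin-Lehner involution, restricts to a perfect $\bI$-bilinear Hecke-equivariant pairing on $T_F$, and I would define $L_p(\ad F)\in \bI$ as the pairing of canonical generators of $T_F^+$ and $T_F^-$ chosen via integral structures coming from Betti cohomology and Ohta's control theorem. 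The standard Mazur-Wiles-Hida identification of the Gorenstein discriminant with the congruence module then shows that $L_p(\ad F)$ generates the congruence ideal $C(F)$.

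The interpolation formula is obtained by specialising this pairing at an arithmetic point $P_f\colon \bI \to \ob\bQ_p$ of weight $k$ and level $Np^r$. Ohta's control theorem identifies $T_F\otimes_{\bI,P_f}\ob\bQ_p$ with the $f$-isotypic component of the ordinary étale cohomology of $X_1(Np^r)$ with coefficients in $\mathrm{Sym}^{k-2}$, and Ohta's pairing specialises to a nonzero multiple of the Poincaré pairing composed with an Atkin-Lehner twist. Combining the Eichler-Shimura comparison with the Hodge decomposition of Betti and de Rham cohomology, the specialised pairing on the $f$-isotypic part equals a suitable normalisation of $(-2i)^k\langle f,f\rangle / \Omega_f^+\Omega_f^-$; the Shimura periods $\Omega_f^\pm$ arise from the integral generators of $T_F^\pm$, while the factor $(-2i)^k$ comes from the conversion between the holomorphic Petersson integral $\int f\,\overline{f}\, y^{k-2}\,dx\,dy$ and the algebraic pairing on $\mathrm{Sym}^{k-2}$.

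To pass from $f$ at level $Np^r$ to the underlying newform $f^\circ$, I would apply classical oldform-newform comparisons. When $f^\circ=f$, no Euler factor appears and $\cE_p(\ad f^\circ)=1$. When $f\neq f^\circ$, so that $f^\circ$ has level $N$ prime to $p$ and $f$ is its ordinary $p$-stabilisation $f_\alpha$, the standard formula for the Petersson norm of a $p$-stabilisation produces the factor $(1-\beta/\alpha)(1-p^{-1}\beta/\alpha)$; combined with the degeneracy index between $\Gamma_1(N)$ and $\Gamma_1(Np)$ and the $U_p$-eigenvalue of $f$, one recovers the full Euler factor $\cE_p(\ad f^\circ)=\alpha(p-1)(1-\beta/\alpha)(1-p^{-1}\beta/\alpha)$. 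The prefactor $p^{r-1}\alpha^r$ arises from iterating the $U_p$-operator together with the normalisation of the degeneracy maps $X_1(Np^r)\to X_1(Np)$ (or $X_1(N)$) as $r$ varies, while the Atkin-Lehner pseudo-eigenvalue $w(f^\circ)$ enters through the twist built into Ohta's pairing.

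The hardest part will be this final bookkeeping of normalisations: the precise form of the Atkin-Lehner twist in Ohta's pairing, the compatibility of the Eichler-Shimura isomorphism with integral lattices as $r$ varies, the explicit action of Hida's projector onto the new subquotient, and the sign and normalisation conventions for the Shimura periods $\Omega_f^\pm$. Verifying that the abstractly defined canonical generator specialises to \emph{precisely} the stated modified Euler factor — so that $L_p(\ad F)$ fits into Coates-Perrin-Riou's motivic framework — is the principal technical challenge; the representation-theoretic and cohomological inputs (Ohta's pairing, control theorem, Gorenstein duality) are available off the shelf, and it is the conversion of their outputs into the right motivic periods that requires care.
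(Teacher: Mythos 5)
Your proposal follows essentially the same route as the paper: $L_p(\ad F)$ is defined by pairing canonical generators of the rank-one $\pm$-eigenspaces (free over the Hecke component by Emerton--Pollack--Weston under \eqref{tag:cr}) under Ohta's $\Lambda$-adic pairing twisted by Atkin--Lehner, the congruence-ideal claim follows from the perfectness of that pairing, and the interpolation formula is obtained by specialising through Hida/Ohta control theorems and comparing with the Petersson product. The ``final bookkeeping'' you defer is precisely what the paper carries out explicitly: the factor $p^{r-1}\alpha^r$ comes from the sum over $U_1/U_r$ and the operator $(T_p^*)^r$ built into Ohta's interpolation formula \eqref{eq:ota_pairing_interpolation}, the constant $(-2i)^k$ from the Poincaré-pairing computation of Proposition \ref{prop:calcul_poincare_pairing}, and $w(f^\circ)$, $(p-1)$ and $\cE_p(\ad f^\circ)$ from Hida's evaluation of $\langle F_\nu,F_\nu^c|W\rangle/\langle F_\nu^\circ,F_\nu^\circ\rangle$, exactly as you anticipated.
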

Let $\ob{\rho}=\ob{\rho}_F$ and let $\bT$ be the local component of Hida's big ordinary Hecke algebra of tame level $N$ associated with $\ob{\rho}$. 
To $\ob{\rho}$ is also attached a piece $e\HH_1(Np^\infty)_{\ob{\rho}}$ of the singular homology of the tower of compactified modular curves $(X_1(Np^r))_{r\geq 1}$, where $e$ denotes Hida's ordinary projector. 
Under \eqref{tag:cr}, the work of Emerton-Pollack-Weston  \cite{emerton2006variation} shows that $e\HH_1(Np^\infty)_{\ob{\rho}}^\pm$ is $\bT$-free of rank 1, where the superscript $\pm$ stands for the $\pm1$-eigenspace for the complex conjugation. 
The choice of a $\bT$-basis $\xi^\pm$ of $e\HH_1(Np^\infty)_{\ob{\rho}}^\pm$ and Hida's control theorem then simultaneously pin down Shimura periods for all classical eigenforms $f$ with residual Galois representation isomorphic to $\ob{\rho}$. Shimura periods for the newform $f^\circ$ associated to $f$ are defined in Remark \ref{rem:periods_associated_newforms}.

In order to construct $L_p(\ad F)$, we first pass, via intersection of cycles, from $e\HH_1(Np^\infty)_{\ob{\rho}}^\pm$ to a cohomology space $e^*\HH^1(Np^\infty)_{\ob{\rho}}^\pm$ where $e^*$ is an \emph{anti-ordinary} projector. We then use a work of Ohta \cite{ohtacrelle} to obtain a perfect $\Lambda$-adic pairing on  $e^*\HH^1(Np^\infty)_{\ob{\rho}}^+\times  e^*\HH^1(Np^\infty)_{\ob{\rho}}^-$ which interpolates Petersson norms of anti-ordinary eigenforms at finite level. 
Since members of $F$ contribute to ordinary cohomology instead of anti-ordinary cohomology, we make use of the Atkin-Lehner involution, which is known by Hida to
transports integral structures between ordinary and anti-ordinary cohomology (see Proposition \ref{prop:hida}).

Fix a modular $p$-ordinary residual representation $\ob{\rho}\colon G_\bQ\to \GL_2(\ob{\bF}_p)$ satisfying \eqref{tag:cr} and a finite set $\Sigma$ of primes not containing $p$. The $p$-ordinarity means that $\ob{\rho}$ admits a $G_{\Qp}$-unramified one-dimensional quotient which is fixed. These choices give rise to a universal $\Lambda$-adic Hecke algebra $\bT_\Sigma(\ob{\rho})=\bT_\Sigma$ whose spectrum parameterizes Hida families with residual $p$-ordinary Galois representation isomorphic to $\ob{\rho}$ that are minimally ramified outside of $\Sigma$. In the next theorem, we choose $\bI$ big enough so that it contains the coefficients of any Hida family appearing in $\Spec(\bT_\Sigma)$.

\begin{THM}\label{thm:introduction_main_2}
	There exists a regular element $L_\Sigma(\ad \ob{\rho})\in\bT_\Sigma\otimes_\Lambda\bI$ whose specialization at any primitive Hida family $F$ minimally ramified outside $\Sigma$ with $\ob{\rho}_F\simeq\ob{\rho}$ is given by $E_\Sigma(\ad F)L_p(\ad F)$ up to $\bI$-units, where $E_\Sigma(\ad F)\in\bI$ is a product of Euler factors associated with the adjoint of $F$ at certain primes of $\Sigma$. 
\end{THM}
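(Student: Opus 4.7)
The plan is to lift the construction of $L_p(\ad F)$ from Theorem \ref{thm:introduction_main_1} from the quotient $\bI$ to the universal Hecke algebra $\bT_\Sigma$, working uniformly across all Hida families parameterized by $\Spec(\bT_\Sigma)$. I would first replace the tame level $N$ by a sufficiently deep auxiliary level $N_\Sigma=N\prod_{\ell\in\Sigma}\ell^{n_\ell}$, chosen so that every primitive Hida family $F$ minimally ramified outside $\Sigma$ with $\ob{\rho}_F\simeq\ob{\rho}$ appears as an oldform in the tower $\{X_1(N_\Sigma p^r)\}_{r\geq 1}$. The Emerton--Pollack--Weston freeness theorem, combined with Ihara's lemma and condition \eqref{tag:cr}, should upgrade at this enlarged level to the statement that the $\ob{\rho}$-localized homology $e\HH_1(N_\Sigma p^\infty)_{\ob{\rho}}^\pm$ is free of rank one over $\bT_\Sigma$; I then fix $\bT_\Sigma$-bases $\xi^\pm_\Sigma$.

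Next I would transfer Ohta's perfect $\Lambda$-adic pairing to level $N_\Sigma$ and combine it with the $\Lambda$-adic Atkin-Lehner involution $W_{N_\Sigma p^\infty}$, exactly as in the construction of $L_p(\ad F)$. My candidate is
\[
    L_\Sigma(\ad\ob{\rho}):=\langle W_{N_\Sigma p^\infty}\,\xi^+_\Sigma,\,\xi^-_\Sigma\rangle\in\bT_\Sigma\otimes_\Lambda\bI,
\]
where extension of scalars to $\bI$ absorbs the Atkin-Lehner pseudo-eigenvalues; changing the choice of $\xi^\pm_\Sigma$ scales this element by a unit of $\bT_\Sigma\otimes_\Lambda\bI$. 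To verify the interpolation formula, I would specialize at a primitive Hida family $F$: the surjection $\bT_\Sigma\otimes_\Lambda\bI\twoheadrightarrow\bI$ attached to $F$ identifies the specialized pairing with Ohta's pairing evaluated on the $F$-stabilization inside $e^*\HH^1(N_\Sigma p^\infty)_{\ob{\rho}}^\pm$. The defect between this stabilized pairing and the pairing at the natural level of $F$ used to construct $L_p(\ad F)$ in Theorem \ref{thm:introduction_main_1} is computed prime-by-prime: at each $\ell\in\Sigma$ for which $F$ is old at $\ell$, the oldform-to-newform transition contributes the $\ell$-adic Euler factor of $\ad F$, and their product is precisely $E_\Sigma(\ad F)$. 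Combined with Theorem \ref{thm:introduction_main_1}, this yields the claimed formula up to an $\bI$-unit.

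Regularity then follows formally: $\bT_\Sigma\otimes_\Lambda\bI$ is Cohen--Macaulay (as $\bT_\Sigma$ is a Taylor--Wiles complete intersection over $\Lambda$), so every associated prime is minimal, and each minimal prime corresponds to some Hida family $F$ at which the specialization equals $E_\Sigma(\ad F)L_p(\ad F)\neq 0$, the non-vanishing coming from the fact that $L_p(\ad F)$ generates the nonzero congruence ideal of $F$. The main obstacle will be the explicit identification of the prime-by-prime level-raising defect with the predicted Euler factors of $\ad F$: at each $\ell\in\Sigma$ this demands a careful local analysis of the oldform-to-newform transition whose form depends sharply on the local Langlands type of $F$ at $\ell$ (principal series, Steinberg, or supercuspidal), together with precise bookkeeping of the Atkin-Lehner normalizations. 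A secondary subtlety is the extension of Emerton--Pollack--Weston freeness to the raised tame level $N_\Sigma$, which may require a mild additional hypothesis on $\Sigma$ or a variant of their Ihara-lemma argument.
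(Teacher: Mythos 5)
Your overall skeleton (pass to level $N_\Sigma$, use Emerton--Pollack--Weston freeness to fix $\bT_\Sigma$-bases $\xi^\pm_\Sigma$, pair them, then compare levels to produce Euler factors) matches the paper, but there is a genuine gap at the very first step: the element you write down does not live in the right ring. Ohta's pairing (and its $\bI$-linear extension) takes values in $\Lambda$ (resp.\ $\bI$), so $\langle W\xi^+_\Sigma,\xi^-_\Sigma\rangle$ is a single element of $\bI$; it cannot specialize differently at different Hida families, hence cannot be the sought element of $\bT_\Sigma\otimes_\Lambda\bI$. To manufacture a Hecke-algebra-valued object one needs an extra duality input. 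The paper does this with Fukaya--Kato's modification of Ohta's pairing, $((x,y))_\bI=\sum_{n\geq 1}(x,y|T_n^*)_\bI\, q^n$, valued in the space of $\Lambda$-adic cusp forms $S(N_\Sigma,\bI)_{\gm}$; this space is free of rank one over $\bT_\Sigma\otimes_\Lambda\bI$ because $\bT_\Sigma$ is Gorenstein (which itself follows from the freeness hypothesis via Ohta duality, not from a Taylor--Wiles complete-intersection statement), with generator $G_\gm=((\xi^+_\Sigma,\xi^-_\Sigma))_\bI$, and $L_\Sigma(\ad\ob{\rho})$ is then \emph{defined} by $\sum F = L_\Sigma(\ad\ob{\rho})\cdot G_\gm$, the sum running over normalized eigenforms with residual representation $\ob{\rho}$. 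Without some equivalent of this step your construction does not get off the ground.

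A second, related gap is the interpolation at a family $F$: you assert that the specialization recovers Ohta's pairing on the $F$-isotypic part, but the statement of the theorem requires $L_p(\ad F)$ to be computed with respect to an $\bI$-basis of the lattice $\cM^\pm_{\lambda_F^*}=\cM^\pm\cap e_F(\cM^\pm\otimes\cK)$, and the naive specialization $e_F\cdot\xi^\pm_\Sigma$ generates $(\cM^\pm)^{\lambda_F^*}$, not $\cM^\pm_{\lambda_F^*}$ --- the discrepancy is exactly the congruence module, i.e.\ of size $L_p(\ad F)$ itself. The paper resolves this by taking $\xi^\pm_F=e_F\cdot(L_p(\ad\ob{\rho})\xi^\pm_\Sigma)$ and proving via a congruence-module computation that this is an integral basis and that $\lambda_F(L_p(\ad\ob{\rho}))=L_p(\ad F)$; this is the heart of the argument and cannot be waved through. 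By contrast, two of your worries are unnecessary: the EPW freeness result applies at the level $N_\Sigma$ directly under \eqref{tag:cr} (no Ihara-type supplement or extra hypothesis on $\Sigma$ is needed), and regularity does not require Cohen--Macaulayness via $R=\bT$ --- $\bT_\Sigma$ is reduced (it is identified with the anemic Hecke algebra $\bT'_\Sigma$), so non-vanishing of the specializations at minimal primes already gives a non-zero-divisor. Finally, the level-comparison you flag as the main obstacle is indeed where real work happens: in the paper it is done with explicit degeneracy maps $\varepsilon$ and their adjoints for Ohta's pairing, computing $\varepsilon\circ\varepsilon^{\mathbf{t}}$ on the $F$-part as $\prod_{\ell\mid (N_\Sigma/N)}E'_\ell(\ad F)$ and using the injectivity theorem of Emerton--Pollack--Weston to see that $\varepsilon^{\mathbf{t}}$ carries $\cM^\pm_{\lambda^*_F}$ onto the corresponding lattice at level $N_\Sigma$; your sketch leaves all of this open.
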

See Theorem \ref{thm:main_2} for a precise statement of this theorem, and in particular, \eqref{eq:definition_N_Sigma} for the definition of the level $N_\Sigma$ of $\bT_\Sigma$ and \eqref{eq:euler_factor} for the definition of the Euler factors. 
Theorems \ref{thm:introduction_main_1} and \ref{thm:introduction_main_2} together can be interpreted as the existence of congruences between Petersson norms of different $p$-ordinary newforms sharing the same residual Galois representation.
As for $L_p(\ad F)$, the element $L_\Sigma(\ad \ob{\rho})$ only depends on the choice of a $\bT_\Sigma$-basis of $e\HH_1(N_\Sigma p^\infty)_{\ob{\rho}}^\pm$.
We use here Fukaya-Kato's variant of Ohta's pairing with values in a space of $\Lambda$-adic cuspforms, which turns out to be $\bT_\Sigma$-free since $\bT_\Sigma$ is Gorenstein.
The appearance of Euler factors comes from the comparison of cohomology at two different levels (as in classical level raising arguments) for which we make use of the main technical result of \cite{emerton2006variation}. 
The author hopes in near future to apply Theorem \ref{thm:introduction_main_2} to the study of the variation of Iwasawa invariants associated with primitive Rankin-Selberg $p$-adic $L$-functions, as constructed in \cite{chenhsieh}. 

We next relate adjoint $p$-adic $L$-functions to their algebraic counterparts called Selmer groups. Let $F$ be a primitive Hida family whose residual Galois representation satisfies \eqref{tag:cr} and let $V_{\ad}=\bI^{\oplus 3}$ be the adjoint Galois representation associated with $F$. By ordinarity, $V_{\ad}$ has a two-dimensional quotient $V_{\ad}^-$ which is stable under the action of $G_{\bQ_p}$. We let $\bI^\vee=\Hom(\bI,\Qp/\Zp)$ be the Pontryagin dual of $\bI$, and set $D_{\ad}=V_{\ad}\otimes_{\bI} \bI^\vee$, $D_{\ad}^-=V_{\ad}^-\otimes_{\bI} \bI^\vee$. The (primitive) adjoint Selmer group $\Sel(\ad F)$ of $F$ is defined as the kernel of the global-to-local restriction map
\[\HH^1(\bQ,D_{\ad}) \longrightarrow \prod_{\ell\neq p} \HH^1(I_\ell,D_{\ad}) \times \HH^1(I_p,D^-_{\ad}),\]
where $I_\ell$ denotes the inertia subgroup at a prime $\ell$. Its Pontryagin dual $\Sel(\ad F)^\vee$ is a finitely generated $\bI$-module. Since $\bI$ is integrally closed, it makes sense to consider the characteristic ideal of a finitely generated torsion $\bI$-module $M$ which we denote by $\char_\bI M$. 

In the next theorem, we make a slightly stronger assumption than \eqref{tag:cr} on residual Galois representations:
\begin{equation}\tag{CR'}\label{tag:cr'}
	\textrm{$\ob{\rho}$ is $p$-distinguished, and $\ob{\rho}_{|G_{\bQ(\mu_p)}}$ is absolutely irreducible.}
\end{equation}
We also need a certain finite set of exceptional primes, defined as
\begin{equation}\label{eq:definition_exceptional_primes}
	\Sigma_e:=\left\{\ell \colon \pi_{f,\ell}\textrm{ is supercuspidal and } \pi_{f,\ell}\simeq\pi_{f,\ell}\otimes \tau_{\bQ_\ell^2}\right\},
\end{equation}
where $\pi_{f,\ell}$ is the automorphic representation of $\GL_2(\bQ_\ell)$ of any arithmetic specialization $f$ of $F$ and $\tau_{\bQ_\ell^2}$ is the unramified quadratic character of $\bQ_\ell^\times$. The definition of $\Sigma_e$ does not depend on the choice of $f$.
\begin{THM}\label{thm:introduction_main_3}
	Assume that $\ob{\rho}_F$ satisfies \eqref{tag:cr'}. The Selmer group $\Sel(\ad F)^\vee$ is a torsion $\bI$-module. If $F$ is twist-minimal, then the equality
	\begin{equation*}
		\char_\bI\Sel(\ad F)^\vee=\prod_{\ell\in\Sigma_e}(1+\ell^{-1})^{-1} L_p(\ad F).\bI
	\end{equation*}
	holds.
\end{THM}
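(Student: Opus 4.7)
The plan is to deduce Theorem \ref{thm:introduction_main_3} from a Wiles-type numerical criterion applied to an $R=T$ isomorphism for Hida families, combined with a local comparison between a ``minimal'' adjoint Selmer group attached to the Hecke algebra and the primitive Selmer group $\Sel(\ad F)$ from the statement. Under \eqref{tag:cr'} together with twist-minimality, the Taylor-Wiles method in its Hida family form (Wiles, Fujiwara, Dimitrov) identifies the local component $\bT$ of Hida's big ordinary Hecke algebra at $\ob{\rho}$ with the universal minimal ordinary deformation ring of $\ob{\rho}$, and shows that this common ring is a complete intersection, finite flat over $\Lambda$. Since $\bI$ is also finite flat over $\Lambda$, this already forces the cotangent module $\wp/\wp^2$ to be $\bI$-torsion for $\wp=\ker(\bT\twoheadrightarrow\bI)$, and the $\bI$-torsion statement for $\Sel(\ad F)^\vee$ will then follow from the identification explained below.

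By the complete intersection property of $\bT$ over $\Lambda$, Wiles' numerical criterion provides a canonical isomorphism of $\bI$-modules
\begin{equation*}
\bI/C(F) \simeq \wp/\wp^2,
\end{equation*}
where $C(F)=\phi(\Ann_\bT(\wp))\bI$ is the minimal congruence ideal of $F$. Simultaneously, the universal deformation interpretation of $\bT$ identifies $\wp/\wp^2$ with $\Sel^{\min}(\ad F)^\vee$, the Pontryagin dual of the minimal adjoint Selmer group, which has the same local condition as $\Sel(\ad F)$ at $p$ but allows the minimal (rather than the unramified) deformation condition at each ramified prime $\ell\mid N$. Passing to characteristic ideals gives $\char_\bI\Sel^{\min}(\ad F)^\vee = C(F)$, and the twist-minimality assumption allows us to apply Theorem \ref{thm:introduction_main_1} to conclude that $C(F)=L_p(\ad F)\cdot\bI$.

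To complete the proof, I would compare $\Sel^{\min}(\ad F)$ with $\Sel(\ad F)$. There is a natural exact sequence
\begin{equation*}
0 \to \Sel(\ad F) \to \Sel^{\min}(\ad F) \to \bigoplus_{\ell\mid N} Q_\ell(\ad F)
\end{equation*}
whose local terms $Q_\ell(\ad F)$ measure the excess of the minimal local condition over the unramified condition at $\ell$. Using local Tate duality and the Euler-Poincaré formula, the characteristic ideal of each $Q_\ell(\ad F)$ can be extracted from the restriction of $\ad F$ to a decomposition group at $\ell$. A case-by-case analysis of the local automorphic type of $\pi_{f,\ell}$ --- principal series, special, or supercuspidal --- shows that $Q_\ell(\ad F)$ has trivial characteristic ideal unless $\pi_{f,\ell}$ is supercuspidal and isomorphic to its twist by the unramified quadratic character, i.e.\ unless $\ell\in\Sigma_e$; in the exceptional case, the characteristic ideal equals $(1+\ell^{-1})\bI$. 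Multiplicativity of characteristic ideals in the exact sequence above then yields
\begin{equation*}
\char_\bI\Sel(\ad F)^\vee = \prod_{\ell\in\Sigma_e}(1+\ell^{-1})^{-1}L_p(\ad F)\cdot\bI.
\end{equation*}

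The main obstacle will be the local analysis at the exceptional primes $\ell\in\Sigma_e$. Since such $\pi_{f,\ell}$ arise by automorphic induction from a character of the unramified quadratic extension of $\bQ_\ell$, the adjoint Galois representation $V_{\ad}$ acquires a one-dimensional unramified summand after restriction to that extension, producing a non-trivial unramified local cohomology at $\ell$. Identifying the precise contribution as $(1+\ell^{-1})$ demands a careful study of the Weil-Deligne parameter of $F$ at $\ell$ and of the induced minimal local deformation condition. The remaining steps --- the $R=T$ input, the numerical criterion, and the identification of $\wp/\wp^2$ with the Selmer dual --- are fairly standard given the framework set up in the earlier sections of the paper.
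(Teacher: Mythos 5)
Your overall philosophy (congruence ideal $\leftrightarrow$ adjoint Selmer group via an $R=\bT$ theorem and Tate's/Wiles' criterion, then a local correction at bad primes) is the same as the paper's, but the specific route you propose has two genuine gaps. First, the input you need at the outset --- that the local component $\bT$ of the level-$N$ Hida--Hecke algebra is the universal \emph{minimal} ordinary deformation ring of $\ob{\rho}$ and is a complete intersection over $\Lambda$ --- is not available as stated, and is in fact false in general: the minimal (conductor-preserving) deformation ring corresponds to tame level $N_0=\cond(\ob{\rho})$, whereas a twist-minimal $F$ may have $N\supsetneq N_0$ (special primes, or ramified primes where $\ob{\rho}$ loses ramification), and the level-$N$ local component also contains old branches of smaller conductor, so it does not represent a ``minimal at $\ell\mid N$'' functor. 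This is exactly why the paper does \emph{not} argue at level $N$: it invokes the Emerton--Pollack--Weston/Wiles--Taylor--Wiles--Diamond isomorphism $R_\Sigma\simeq\bT_\Sigma$ at the auxiliary level $N_\Sigma$ with \emph{fully relaxed} conditions at $\Sigma$ (Theorem \ref{thm:wiles_Sigma}, \eqref{eq:R=T}), identifies $C_1(\lambda_\Sigma)^\vee$ with the imprimitive group $\Sel^\Sigma(\ad F)$ (Theorem \ref{thm:main_conjecture_large_sigma}), and transports the congruence ideal back to level $N$ through the degeneracy maps and Ohta's pairing, which is where the Euler factors $E_\ell(\ad F)$ enter (Proposition \ref{prop:change_of_level_p_adic_L}). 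Your appeal to Theorem \ref{thm:introduction_main_1} to get $C(F)=L_p(\ad F)\bI$ presumes the congruence ideal is taken in the level-$N$ algebra, while your $R=\bT$, complete-intersection, and $\wp/\wp^2\simeq\Sel^{\min}(\ad F)^\vee$ steps would have to take place in a different (minimal or relaxed) ring; as written the two do not refer to the same object.

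Second, the entire quantitative content at the primes $\ell\mid N$ is deferred in your sketch. You assert an exact sequence $0\to\Sel(\ad F)\to\Sel^{\min}(\ad F)\to\bigoplus_{\ell\mid N}Q_\ell(\ad F)$ with $\char_\bI Q_\ell$ trivial off $\Sigma_e$ and equal to $(1+\ell^{-1})\bI$ on $\Sigma_e$, but (i) a ``minimal'' local condition making $\Sel^{\min}$ match the cotangent space of a Hecke algebra is never pinned down, (ii) the vanishing of $Q_\ell$ at special and ramified principal series primes is precisely what must be proved --- in the paper the analogous discrepancies are computed explicitly and are \emph{not} units (they are the factors $1-\ell^{-1}$, $1-\ell^{-2}$ of Proposition \ref{prop:calcul_euler_factors_selmer}), and they cancel only against the level-change factors of Proposition \ref{prop:change_of_level_p_adic_L} --- and (iii) to convert your exact sequence into an equality of characteristic ideals you must control the cokernel of the localization map; the paper needs Greenberg's surjectivity result for this (Proposition \ref{prop:surjectivity_phi}), which in turn uses the already-established torsionness of $\Sel^\Sigma(\ad F)^\vee$ and a corank computation at $p$. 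Without these three points your argument reduces to the statement to be proved at exactly the places where the Euler factors and the $\Sigma_e$ correction arise.
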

Recall that $F$ is twist-minimal if it has minimal level among its twists. 
Note that $\Sel(\ad F)$ is invariant under twists, but $L_p(\ad F)$ isn't, so twist-minimality is a necessary hypothesis in Theorem \ref{thm:introduction_main_3}. 
However, the relation between adjoint $p$-adic $L$-functions of Hida families and their twists is well-understood (see Proposition \ref{prop:twists_and_adjoint_p_adic_L}), so this is not a restricting hypothesis. We note that a version of Theorem \ref{thm:introduction_main_3} is stated in \cite[Coro. 6.29]{hidapune} under rather restrictive assumptions, forcing $F$ to be minimally ramified.

Under assumption \eqref{tag:cr'}, the ring $\bT_\Sigma$ turns out to be the universal ring $R_\Sigma$ for $p$-ordinary Galois deformations of $\ob{\rho}$ that are minimally ramified outside $\Sigma$. 
As noticed by Mazur and Tilouine \cite{mazurtilouine}, the module of continuous Kähler differentials of $R_\Sigma$ as a $\Lambda$-algebra is related to both adjoint Selmer groups and congruence ideals of systems of eigenvalues appearing in $\bT_\Sigma$. 
This observation is a crucial input in works of Wiles, Taylor-Wiles, and Diamond-Flach-Guo among others and has applications to modularity and to the Bloch-Kato conjecture \cite{wilesfermat,taylorwiles,DFG}. 
We obtain a relation between $\Sigma$-imprimitive Selmer groups and $p$-adic $L$-functions, from which we deduce the equality in Theorem \ref{thm:introduction_main_3} via a generalization of Greenberg-Vatsal's argument \cite{greenbergvatsal2000}.

The plan of this article is as follows. In Section \ref{sec:section_2_construction_p_adic_L} we give the construction of an adjoint $p$-adic $L$-function as in Theorem \ref{thm:introduction_main_1} and we introduce $p$-adic $L$-functions with values in big Hecke algebras. We analyze in Section \ref{sec:section_3_main_conjecture} how these objects change when we vary the tame level, yielding the proof of Theorem \ref{thm:introduction_main_2}. We then study Selmer groups and prove Theorem \ref{thm:introduction_main_3} in the end of Section \ref{sec:section_3_main_conjecture}. Appendix \ref{appendix} recalls definitions and standard facts related to congruence modules and congruence ideals.

\subsection*{Acknowledgments}
The author would like to thank Ming-Lun Hsieh for his comments on this work. This research is partially supported by the Luxembourg National Research Fund, Luxembourg, INTER/ANR/ 18/12589973 GALF.

\subsection*{Notations}
We adopt the convention that the usual Hecke operators (acting on modular forms, cohomology, etc.) in level $M$ are denoted $T_n$, even in the case when $n$ is not coprime to $M$. In particular, for a prime $\ell$ dividing $M$, $T_\ell$ is Atkin-Lehner's operator $U_\ell$. We use the letter $\gh$ for Hecke algebras, and $\bT$ for local components of Hecke algebras (\textit{i.e.} their localizations at maximal ideals). Adjoint Hecke operators for the Petersson scalar product and adjoint Hecke algebras are typically denoted with an asterisk ($T_n^*$, $\gh^*$, $\bT^*$, etc.).

\section{Construction of a $p$-adic $L$-function}\label{sec:section_2_construction_p_adic_L}
\subsection{Adjoint $L$-values}\label{sec:adjoint_L_values}
We fix in this paragraph an integer $M\geq 3$. For $\Gamma=\Gamma_1(M)$, we let $S_k(M)$ be the space of modular cusp forms of weight $k\geq 1$ and level $\Gamma$. Recall the usual slash operator in weight $k$ defined by
\[(f|s)(z)= \det(s)^{k-1}(cz+d)^{-k}f(\frac{az+b}{cz+d}) \] 
for every function $f\colon \sH \to \bC$ on the upper half-plane and every $s=\left(\begin{smallmatrix}a & b \\ c & d\end{smallmatrix}\right)\in S^+=M_2(\bZ)\cap \GL_2(\bQ)^+$.

The abstract Hecke ring $\cH(S^+,\Gamma)$ consisting of double cosets $[\Gamma s \Gamma]$, $s\in S^+$ acts on $S_k(M)$. For $n,d\geq 1$ with $(d,M)=1$, we denote by $T_n$, $T^*_n$, $\langle d \rangle$ the usual Hecke operators, adjoint Hecke operators and diamond operators respectively. The Atkin-Lehner involution is $W_M=[\Gamma\tau_M\Gamma]$, where
\[\tau_M=\begin{pmatrix}0 & -1 \\ M & 0\end{pmatrix}.\]
Note that $\tau_M$ normalizes $\Gamma$, and $T_n W_M=W_M T^*_n$, $T^*_nW_M=W_MT_n$ and $\langle d \rangle W_M=W_M\langle d\rangle^{-1}$.

The Petersson scalar product is defined by the formula
\[\langle f,g \rangle_\Gamma = \int_{X(\bC)}f(z)\ob{g(z)}y^{k-2}dxdy \qquad (z=x+iy),\]
 where $f,g\in S_k(M)$ and $X=X_1(M)_{/\bC}$ is the compactified modular curve of level $\Gamma$. The adjoint of $[\Gamma s \Gamma]$ for $\langle \cdot,\cdot \rangle_\Gamma$ is $[\Gamma s' \Gamma]$, where $s'=\det(s)s^{-1}$, \textit{i.e.},
 \[s'=\begin{pmatrix}d & -b \\ -c & a\end{pmatrix} \qquad \mbox{if } s=\begin{pmatrix}a & b \\ c & d\end{pmatrix}.\]
  The operators $T_n$ and $T_n^*$ (resp. $\langle d\rangle$, and $\langle d\rangle^{-1}$) are adjoint to each other under $(\cdot,\cdot)_\Gamma$.
 
 The Nebetype action decomposes $S_k(M)$ as a direct sum $\oplus S_k(M,\chi)$ indexed on the set of Dirichlet characters $\chi \colon (\bZ/M\bZ)^\times \to \bC^\times$ and which is Hecke-compatible. 
  
 Let $f(q)=\sum_{n\geq 1} a_n q^n\in S_k(M,\chi)$ be a normalized eigenform (\textit{i.e.}, $a_1=1$ and $f|T_n=a_nf$ for all $n\geq 1$). For every prime $\ell$, let $\alpha_\ell$ and $\beta_\ell$ be such that 
 \[(1-\alpha_\ell X)(1-\beta_\ell X)=1-a_\ell X + \chi(\ell)\ell^{k-1} X^2,\] 
where $\chi(\ell)=0$ if $\ell |M$. Define 
\small\[L^\text{imp}(s,\ad f)=\prod_\ell  (1-\frac{\alpha_\ell^2}{\chi(\ell)\ell^{k-1}}\ell^{-s})(1-\frac{\alpha_\ell\beta_\ell}{\chi(\ell)\ell^{k-1}}\ell^{-s})(1-\frac{\beta_\ell^2}{\chi(\ell)\ell^{k-1}}\ell^{-s}),\]\normalsize
the product being taken over all prime numbers $\ell$. The product converges for $\Re(s)>>0$ and it has a meromorphic continuation to $\bC$.  When $f$ is new, $L^\textrm{imp}(s,\ad f)$ coincides, up to Euler factors at certain primes $\ell|M$, with the $L$-function $L(\ad f,s)$ attached to the adjoint lift of the automorphic representation of $\GL_2(\bA)$ generated by $f$.
\begin{proposition}\label{prop:formula_Hida_imprimitive_L}
	Assume $f$ is a newform of level $M$ and Nebentype $\chi$, and let $M_\chi$ be the conductor of $\chi$. Then
	\[L^\text{imp}(1,\ad f)= \frac{2^{2k}\pi^{k+1}}{(k-1)!MM_\chi\varphi(M/M_\chi)} \langle f,f \rangle_\Gamma,\]
	where $\varphi$ is Euler's totient function.
\end{proposition}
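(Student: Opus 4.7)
The plan is to apply the standard Rankin--Selberg unfolding method, following Shimura and Hida. I would introduce the real-analytic Eisenstein series
\[
E(z, s) = \sum_{\gamma \in \Gamma_\infty\backslash \Gamma_1(M)}(\operatorname{Im}\gamma z)^s,
\]
where $\Gamma_\infty$ is the stabilizer in $\Gamma_1(M)$ of the cusp at infinity, and consider the Rankin--Selberg integral
\[
I(s) = \int_{\Gamma_1(M)\backslash \sH} |f(z)|^2 E(z, s)\, y^{k-2}\, dx\, dy.
\]
Unfolding $I(s)$ against a fundamental domain for $\Gamma_\infty\backslash \sH$, inserting the Fourier expansion $f=\sum a_n q^n$, and applying the Mellin identity $\int_0^\infty y^{s+k-2}e^{-4\pi n y}\,dy=\Gamma(s+k-1)/(4\pi n)^{s+k-1}$ yields
\[
I(s) = \frac{\Gamma(s+k-1)}{(4\pi)^{s+k-1}}\sum_{n\geq 1}\frac{|a_n|^2}{n^{s+k-1}}.
\]

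The next step is to identify the Dirichlet series $D(s)=\sum_n |a_n|^2/n^s$ with $L^{\text{imp}}(s-k+1,\ad f)$ up to elementary $\zeta$-factors. Using the Euler product for $f$ and the Nebentype relation $\bar\alpha_\ell\bar\beta_\ell = \bar\chi(\ell)\ell^{k-1}$, a local computation at each prime $\ell$ gives an identity of the shape
\[
D(s) = \frac{\zeta^{(M)}(s-k+1)}{\zeta^{(M)}(2s-2k+2)}\cdot L^{\text{imp}}(s-k+1, \ad f)\cdot\prod_{\ell\mid M/M_\chi}c_\ell(s),
\]
where $c_\ell(s)$ are explicit local corrections reflecting the fact that $L_\ell^{\text{imp}}(s,\ad f)=1$ at primes $\ell\mid M/M_\chi$ while $D(s)$ still has a non-trivial local factor at such primes.

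Finally, I would specialize at $s=k$ (equivalently $s-k+1=1$) by taking residues. The Eisenstein series $E(z,s)$ has a simple pole at $s=1$ whose residue is inversely proportional to the hyperbolic volume of $Y_1(M)$, so
\[
\operatorname{Res}_{s=1}I(s) = \mathrm{(explicit\ constant)}\cdot\frac{\langle f, f\rangle_\Gamma}{\operatorname{vol}(Y_1(M))};
\]
on the Dirichlet-series side, the residue is supplied by $\zeta^{(M)}(s-k+1)$ and extracts $L^{\text{imp}}(1,\ad f)$. Equating the two sides, the factor $\Gamma(k)/(4\pi)^k=(k-1)!/(2^{2k}\pi^k)$ accounts for the $\Gamma$-factor in the final formula, the hyperbolic volume of $Y_1(M)$ combined with the local residue of the Eisenstein series contributes the factor $M$, and the corrections $c_\ell$ at primes dividing $M/M_\chi$ together with the conductor of $\chi$ produce the factor $M_\chi\varphi(M/M_\chi)$ in the denominator.

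The main obstacle is the careful bookkeeping of local factors at primes $\ell\mid M/M_\chi$: the primitive-versus-imprimitive discrepancy must be reconciled with the degeneracy of the Satake parameters at such primes, and it is precisely this combinatorics which produces the subtle factor $M_\chi\varphi(M/M_\chi)$. For a cleaner derivation, one may instead directly invoke Hida's formula (as in the Rankin--Selberg computation in \cite{hidaGL3}) and verify that the normalization of the Petersson inner product adopted here matches Hida's.
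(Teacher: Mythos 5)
Your method --- Rankin--Selberg unfolding of $|f|^2$ against the real-analytic Eisenstein series, identification of $\sum_n |a_n|^2 n^{-s}$ with the imprimitive adjoint $L$-function up to $\zeta$-factors, and a residue comparison at the edge point --- is the standard and correct route, and it is in essence a re-proof of the input the paper actually uses: the paper's own proof is a one-line citation of \cite[Thm.~5.1]{hida1981inventiones} (plus the remark that $M\geq 3$). So you are reconstructing the cited theorem rather than following the paper, which is legitimate in principle; your closing suggestion to simply invoke Hida's formula and match normalizations of $\langle\cdot,\cdot\rangle_\Gamma$ is exactly what the paper does and is the shortest complete argument.

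As a proof of this particular proposition, however, your write-up has a genuine gap: the entire content of the statement is the exact constant $2^{2k}\pi^{k+1}/\bigl((k-1)!\,M M_\chi\varphi(M/M_\chi)\bigr)$, and that is precisely what you leave unverified. The local corrections $c_\ell(s)$ at $\ell\mid M$ are never computed; they hinge on the newform-theoretic evaluations $|a_\ell|^2=\ell^{k-2}$ (when $\mathrm{ord}_\ell(M)=1$ and $\ell\nmid M_\chi$), $|a_\ell|^2=\ell^{k-1}$ (when $\mathrm{ord}_\ell(M)=\mathrm{ord}_\ell(M_\chi)$), and $a_\ell=0$ (otherwise), and it is exactly this case analysis that produces $M_\chi\varphi(M/M_\chi)$. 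Likewise the attribution ``the volume contributes the factor $M$'' is not correct as stated: $\mathrm{vol}(Y_1(M))$ is proportional to $M^2\prod_{\ell\mid M}(1-\ell^{-2})$, and only after cancellation against $\zeta^{(M)}(2)$ (the value of $\zeta^{(M)}(2s-2k+2)$ at the special point, which also supplies the extra power of $\pi$) and against the local corrections does one land on $M M_\chi\varphi(M/M_\chi)$. Without this bookkeeping the displayed constant is asserted rather than proved, so either carry out the local and volume computations in full or replace the sketch by the direct citation of Hida's formula, as the paper does.
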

\begin{proof}
	This follows from \cite[Thm. 5.1]{hida1981inventiones}, using that $M\geq 3$ by assumption.
\end{proof}
\begin{remark}
	Assume $f$ is twist-minimal. It is perhaps nicer to reformulate Proposition \ref{prop:formula_Hida_imprimitive_L} in terms of primitive adjoint $L$-functions as
	\begin{align*}
		\langle f,f \rangle_\Gamma &= \dfrac{(k-1)!M\varphi(M)}{2^{2k}\pi^{k+1}} \prod_{\ell\in\Sigma_e}(1+\ell^{-1}) L(\ad f,1) \\
							&=M\varphi(M)2^{1-k} \prod_{\ell\in\Sigma_e}(1+\ell^{-1}) \Gamma(\ad f,1) L(\ad f,1),
	\end{align*}
	where $\Sigma_e$ is the set of exceptional primes for $f$ of the introduction and $\Gamma(\ad f,s)=\Gamma_\bC(s+k-1)\Gamma_\bR(s)$ (and $\Gamma_\bC(s)=2(2\pi)^{-s}\Gamma(s)$, $\Gamma_\bR(s)=\pi^{-s/2}\Gamma(s/2)$) is the Gamma factor of the motive attached to $\ad f$ (see \cite[Lemma 2.12]{DFG}).
\end{remark}

We recall the definition of Shimura periods attached to Hecke eigenforms. For a subring $A$ of $\bC$, consider the local system $V_k(A)$ on $X$ associated with the $A$-module $\textrm{TSym}^{k-2}(A^{\oplus 2})$. This latter module consists of homogeneous polynomials of degree $k-2$ of the form $P(X,Y)=\sum_{i} \binom{k-2}{i}a_{i}X^iY^{k-2-i}$ with $a_i\in A$ for all $0\leq i \leq k-2$. For such a polynomial $P$, we let $s\in S=M_2(\bZ)\cap \GL_2(\bQ)$ act on $P$ via $(P|s)(X,Y)=P( (X\ Y) (s')^\textrm{tr})$, \textit{i.e.},
	 \[(P|s)(X,Y)=P(dX-bY,-cX+aY) \qquad \mbox{if } s=\begin{pmatrix}a & b \\ c & d\end{pmatrix}.\]
As $\Gamma$ is torsion-free,  $\HH^1(X(\bC),V_k(A))$ can be identified with the parabolic cohomology group  $\HH_P^1(\Gamma,\textrm{TSym}^{k-2}(A^{\oplus 2}))$. It comes equipped with an action of the abstract Hecke algebra $\cH(S,\Gamma)$ (\cite[\S6.3]{hidaLFE}). 

For a given $f\in S_k(M)$, we define classes in $\HH^1(X(\bC),V_k(\bC))$ by the formulae
	\[
	\delta(f)(\gamma)=\int_{z_0}^{\gamma z_0}(X-zY)^{k-2}f(z)\diff\!z, \ \ \delta^c(f)=-\int_{z_0}^{\gamma z_0}(X-\ob{z}Y)^{k-2}\ob{f^c(z)}\diff\! \ob{z},
	\]
where $z_0\in X(\bC)$, $\gamma\in \Gamma$ and $f^c(z)=\ob{f(-\ob{z})}$. The cohomology classes do not depend on the choice of $z_0$. The action of the complex conjugation $[\Gamma\iota\Gamma]$, where $\iota=\left(\begin{smallmatrix}1 & 0 \\ 0 & -1\end{smallmatrix}\right)$, sends $\delta(f)$ to $\delta^c(f)$. The Eichler-Shimura period map is the $\bC$-linear isomorphism 
\begin{equation}\label{eq:eichler_shimura_period_map}
	\begin{array}{ccc}
	S_k(M) \oplus S_k(M) & \overset{\sim}{\longrightarrow} & \HH^1(X(\bC),V_k(\bC)) \\ 
	(f,g) & \mapsto & \delta(f)+\delta^c(g).
\end{array}
\end{equation}
Take $f\in S_k(M)$ and $s=\left(\begin{smallmatrix}a & b \\ c & d\end{smallmatrix}\right)\in S^+$. From 
\[(X-(s\cdot z)Y)^{k-2}|s'=\left(\frac{\det s}{cz+d}\right)^{k-2}(X-zY)^{k-2} \quad \mbox{and}\quad f^c|s=(f|\iota s \iota)^c, \]
one sees that 
\begin{equation}\label{eq:action_slash_cohomology}
	\delta(f)|[\Gamma s \Gamma]=\delta(f|[\Gamma s \Gamma]) \qquad \mbox{and}\qquad \delta^c(f)|[\Gamma s \Gamma]=\delta^c(f|[\Gamma \iota s\iota \Gamma]).
\end{equation}
In particular, \eqref{eq:eichler_shimura_period_map} is equivariant with respect to the actions of $T_n,T^*_n$ and $\langle d \rangle$ for $n,d\geq 1$, $(M,d)=1$. 

For any ring $A\subseteq \bC$, we put 
\[\HH^1_k(M)_A:= \HH^1(X(\bC),V_k(A))\]
and we consider the Hecke algebras 
\[\gh_k(M)_A=A[T_n,\langle \ell \rangle]_{n,\ell\geq 1, (\ell,M)=1}, \qquad \gh^*_k(M)_A=A[T^*_n,\langle \ell \rangle]_{n,\ell\geq 1, (\ell,M)=1},\]
as subspaces of $\End_A(\HH^1_k(M)_A)$.
They are finitely generated commutative $A$-algebras. If $2$ is invertible in $A$, we denote by $\HH^1_k(M)_A^\pm$ the subspaces of $\HH^1_k(M)_A$ where the complex conjugation acts by $\pm1$. For $A=\bC$, the map $f \mapsto \delta^\pm(f)$ identifies $S_k(M)$ with $\HH^1_k(M)_A^\pm$, where we have put 
\begin{equation}\label{eq:definition_delta_pm}
	\delta^\pm(f)=\delta(f)\pm\delta^c(f).
\end{equation}

As $S_k(M)$ admits a basis consisting of modular forms with Fourier coefficients in $\bZ$, for every subring $A\subseteq \bC$ we have $S_k(M)=\bC \otimes_{A} S_k(M)_A$ as $\gh_k(M)_\bC=\bC\otimes_A \gh_k(M)_A$-modules, where 
\[S_k(M)_A=\{f\in S_k(M) \ \colon\ f(q)=\sum_{n\geq 1}a_nq^n\in A\otimes_{\bZ}\bZ[[q]] \}.\] 

Recall that $\ob{\bQ}\subset \ob{\bQ}_p\simeq \bC$.  For a normalized eigenform $f(q)=\sum_{n\geq 1}a_nq^n\in S_k(M,\chi)$, the ring $\bZ_f=\bZ[a_n]_{n\geq 1}$ is a subring of $\ob{\bQ}$. We denote by \[\lambda_f \colon \gh_k(M)_{\bZ_f} \to \bZ_f\]
the system of Hecke eigenvalues of $f$ given by $\lambda_f(T_n)=a_n$ and $\lambda_f(\langle d\rangle)=\chi(d)$ for all $n,d\geq 1$, $(d,M)=1$.

For any field $K\subset \bC$ containing $\bZ_f$ and any $\gh_k(M)_K$-module $M$, let $M[\lambda_f]$ be the $\lambda_f$-eigenspace of $M$. 
Using the maps $\delta^\pm$ in \eqref{eq:definition_delta_pm}, it is clear that $\dim_\bC\HH^1_k(M)^\pm_\bC[\lambda_f]=1$. 

Let $\cO_f$ be the completion of $\bZ_f$ inside $\ob{\bQ}_p\simeq\bC$. The image of $\HH^1_k(M)_{\cO_f}$ inside $\HH^1_k(M)_{\ob{\bQ}_p}$ is a $\cO_f$-lattice, which  we still denote by $\HH^1_k(M)_{\cO_f}$ with a slight abuse of notations. As long as we deal with ordinary eigenforms, this won't cause any problem as the ordinary part of $\HH^1_k(M)_{\cO_f}$ is $\cO_f$-free (see Proposition \ref{prop:hida} below).

\begin{definition}\label{def:canonical_periods}
 Let $f\in S_k(M)$ be a normalized eigenform. Let $\xi_f^\pm$ be a $\cO_f$-basis of 
 \begin{equation}\label{eq:eigenspace_definition_xi_f}
 	\HH^1_k(M)_{\cO_f} \cap \HH^1_k(M)^\pm_{\bC}[\lambda_f].
 \end{equation}
 The $p$-normalized periods of $f$ with respect to $\xi^\pm$ are the complex numbers $\Omega_f^\pm\in\bC^\times$ defined by the relation 
 \[\delta^\pm(f)=\Omega^\pm_f\cdot \xi_f^\pm.\]
\end{definition}

\begin{remark}
	One can actually replace the coefficient ring $\cO_f$ in the above definition with any PID $A\subseteq \bC$ (see e.g. \cite[Rem. 5.4.10]{eigenbook}). When $A$ is a subring of $\ob{\bQ}$, one retrieves the usual definition of complex periods attached to normalized eigenforms.
\end{remark}

We now recall the cohomological interpretation of the Petersson norm of normalized eigenforms. Let $A\subseteq \bC$ be a ring, and consider the nondegenerate $A$-linear pairing $[\cdot,\cdot]$ on $\textrm{TSym}^{k-2}(A^{\oplus 2})$ defined in \cite[(4.2.1)]{ohtacrelle}. It is characterized by the identity 
\[[(uX+vY)^{k-2},(u'X+v'Y)^{k-2}]=(uv'-u'v)^{k-2} \qquad (u,u',v,v'\in A).\]
For $s\in S$ and $P,Q\in \textrm{TSym}^{k-2}(A^{\oplus 2})$, one checks that $[P|s,Q|s]=(\det s)^{k-2}[P,Q]$. Combining $[\cdot,\cdot]$ with the cup-product in cohomology we obtain a new $A$-linear pairing 
\begin{equation}
	(\cdot,\cdot)_\pi \colon \HH^1_k(M)_A \times \HH^1_k(M)_A \to \HH^2(X(\bC),A) \simeq A,
\end{equation}
where the last isomorphism is the cap-product with the fundamental class of $X(\bC)$, endowed with its natural orientation of complex manifold. For $s\in S$, the adjoint of $[\Gamma s \Gamma]$ under $	(\cdot,\cdot)_\pi$ is $[\Gamma s' \Gamma]$, so for the modified anti-symmetric pairing 
\begin{equation}\label{eq:modified_pairing}
	(x,y) \mapsto (x,y|W_M)_\pi,
\end{equation}
the Hecke algebras $\gh_k(M)_A$ and $\gh^*_k(M)_A$ become autoadjoint.

\begin{proposition}\label{prop:calcul_poincare_pairing}
	Let $f,g\in S_k(M)$. We have 
	\begin{align*}
		(\delta(f),\delta^c(g))_\pi&= (2i)^{k-1}\langle f,g^c\rangle_\Gamma \\
		(\delta^+(f),\delta^-(g)|W_M)_\pi&=-2(2i)^{k-1}\langle f,g^c|W_M\rangle_\Gamma.
	\end{align*}
\end{proposition}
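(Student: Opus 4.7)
The plan is to prove both identities by computing the cup-product pairing $(\cdot,\cdot)_\pi$ at the level of explicit de Rham representatives. Specifically, I would represent $\delta(f)$ and $\delta^c(g)$ by the $V_k(\bC)$-valued 1-forms $\omega_f = (X-zY)^{k-2}f(z)\,dz$ and $\omega^c_g = -(X-\bar z Y)^{k-2}\overline{g^c(z)}\,d\bar z$ on the upper half-plane; these are the de Rham realizations implicit in the integral formulas defining $\delta$ and $\delta^c$.

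For the first identity, I would wedge $\omega_f\wedge\omega^c_g$, apply the coefficient pairing $[\cdot,\cdot]$, and integrate over a fundamental domain for $X(\bC)$. The key algebraic input is the identity $[(X-zY)^{k-2}, (X-\bar z Y)^{k-2}] = (z-\bar z)^{k-2} = (2iy)^{k-2}$, immediate from the defining property of $[\cdot,\cdot]$ with $(u,v,u',v')=(1,-z,1,-\bar z)$. Combined with $dz\wedge d\bar z = -2i\,dx\wedge dy$, this produces the scalar $2$-form $(2i)^{k-1}y^{k-2}f(z)\overline{g^c(z)}\,dx\wedge dy$, whose integral is $(2i)^{k-1}\langle f,g^c\rangle_\Gamma$ by the very definition of the Petersson inner product.

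For the second identity, I would expand $\delta^+(f) = \delta(f)+\delta^c(f)$ and $\delta^-(g)|W_M = \delta(g)|W_M - \delta^c(g)|W_M$ bilinearly. Because $W_M$ is induced by a biholomorphic involution of $X(\bC)$, it preserves the decomposition into $(1,0)$- and $(0,1)$-parts; so $\delta(g)|W_M$ remains of holomorphic type and $\delta^c(g)|W_M$ of antiholomorphic type. Two of the four resulting pairings vanish because the wedge of two $(1,0)$- (resp.\ $(0,1)$-)forms is zero, leaving $-(\delta(f), \delta^c(g)|W_M)_\pi + (\delta^c(f), \delta(g)|W_M)_\pi$. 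Each surviving cross term I would then reduce to an instance of identity~1 applied at $g|W_M$ in place of $g$, using \eqref{eq:action_slash_cohomology} to rewrite $\delta(g)|W_M$ and $\delta^c(g)|W_M$, the graded anticommutativity of cup product on $1$-cocycles, and the elementary slash-identity $(g|W_M)^c = (-1)^k g^c|W_M$, which one verifies directly from the Atkin-Lehner formula together with the definition $g^c(z) = \overline{g(-\bar z)}$.

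The main technical obstacle lies in the careful sign bookkeeping around the interaction of $W_M$ with $\delta^c$ and with the involution $g\mapsto g^c$. Three compensating factors of $(-1)^k$ need to be tracked: one from $\iota\tau_M\iota = -\tau_M$ entering the equivariance formula for $\delta^c$, one from the identity $(g|W_M)^c = (-1)^k\, g^c|W_M$, and one from the Petersson adjoint relation $\langle g|W_M, h\rangle_\Gamma = (-1)^k \langle g, h|W_M\rangle_\Gamma$ stemming from $\tau_M' = -\tau_M$. These signs must cancel so that both surviving cross terms contribute $-(2i)^{k-1}\langle f, g^c|W_M\rangle_\Gamma$, whose sum yields the factor $-2(2i)^{k-1}$ in the claim.
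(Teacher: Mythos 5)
Your strategy is the same as the paper's: the first identity by pairing explicit de Rham representatives (the coefficient-pairing value $(z-\ob{z})^{k-2}=(2iy)^{k-2}$, $dz\wedge d\ob{z}=-2i\,dx\wedge dy$, then integration), and the second by bilinear expansion, vanishing of the two like-type terms, and reduction of the cross terms to the first identity; your final formulas and the claimed contribution $-(2i)^{k-1}\langle f,g^c|W_M\rangle_\Gamma$ from each cross term are correct.

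Two points in your sign bookkeeping need repair before the outline closes up, though. First, the swap relation is not just graded anticommutativity of the cup product on degree-one classes: the coefficient pairing $[\cdot,\cdot]$ is itself $(-1)^{k-2}$-symmetric, so the correct relation is $(\delta(h),\delta^c(h'))_\pi=(-1)^{k-1}(\delta^c(h'),\delta(h))_\pi$, as the paper records; with the sign $-1$ alone your tally would come out wrong for odd $k$ (which is allowed here, since $S_k(M)$ is taken for $\Gamma_1(M)$). Second, after applying the first identity to the term $(\delta^c(f),\delta(g|W_M))_\pi$ you are left with $\langle g|W_M,f^c\rangle_\Gamma$, and the Atkin--Lehner adjoint relation only moves $W_M$ across the pairing; it does not exchange the roles of $f$ and $g$. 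You additionally need the elementary symmetry $\langle h_1,h_2^c\rangle_\Gamma=\langle h_2,h_1^c\rangle_\Gamma$ (change of variables $z\mapsto-\ob{z}$), which is what the paper uses, together with $(g|W_M)^c=(-1)^k g^c|W_M$, to convert $\langle g|W_M,f^c\rangle_\Gamma$ into $(-1)^k\langle f,g^c|W_M\rangle_\Gamma$. With these two corrections your computation matches the paper's proof line by line.
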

\begin{proof}
	We start with the first equality. Over $\bC$, the cap-product with the fundamental class of $X(\bC)$ is given by the integration over $X(\bC)$ of cohomology classes seen as $2$-differential forms. Therefore,
	\begin{align*}
		(\delta(f),\delta^c(g))_\pi &= - \int_{X(\bC)}   f(z)\ob{g^c(z)} [(X-zY)^{k-2},(X-\ob{z}Y)^{k-2}] dz\wedge d\ob{z} \\
		&= 2i \int_{X(\bC)} f(z)\ob{g^c(z)}(z-\ob{z})^{k-2} dxdy \\
		&=(2i)^{k-1}\int_{X(\bC)} f(z)\ob{g^c(z)}y^{k-2} dxdy.
	\end{align*}
This proves the first formula. A similar computation as above yields \small
\begin{align*}
	(\delta(h),\delta(h'))_\pi&=0=(\delta^c(h),\delta^c(h'))_\pi, \\  (\delta(h),\delta^c(h'))_\pi&=(-1)^{k-1}(\delta^c(h'),\delta(h))_\pi,
\end{align*}
\normalsize for any $h,h'\in S_k(M)$, and from \eqref{eq:action_slash_cohomology} we find $\delta^-(g)|W_M=\delta(g|W_M)+(-1)^{k-1}\delta^c(g|W_M)$. Hence, by linearity,
\begin{align*}
	(\delta^+(f),\delta^-(g)|W_M)_\pi&= (-1)^{k-1} \left( (\delta(f),\delta^c(g|W_M))_\pi + (\delta(g|W_M),\delta^c(f))_\pi \right) \\
									&=(-2i)^{k-1}\left(\langle f,(g|W_M)^c\rangle_\Gamma+\langle g|W_M,f^c\rangle_\Gamma\right) \\
									&= 2(-2i)^{k-1}\langle f,(g|W_M)^c\rangle_\Gamma \\
									&=-2(2i)^{k-1}\langle f,g^c|W_M\rangle_\Gamma,
\end{align*}
since $(g|W_M)^c=(-1)^kg^c|W_M$. This proves the second formula.
\end{proof}

\subsection{Hida families}

Let $p$ be an odd prime and let $M$ be of the form $Np^r$ with $(N,p)=1$ and $r\geq 1$. We let $X_r=X(\Gamma_1(Np^r))_{/\bC}$. We shall consider the ordinary and anti-ordinary parts of the (co-)homology of $X_r$, defined with the help of Hida's projectors
\[ e=\lim_r (T_p)^{r!}, \qquad e^*=\lim_r (T^*_p)^{r!}.\]
In what follows, we drop for convenience the subscript $\Zp$ when we work with cohomology with $\Zp$-coefficients.
\begin{proposition}[Hida]\label{prop:hida}
	The Atkin-Lehner map 
	\[\cdot |W_{Np^r} \colon e\HH^1_k(Np^r)\to e^*\HH^1_k(Np^r)\]
	is a $\Zp$-linear isomorphism, and $e\HH^1_k(Np^r)$ and $e^*\HH^1_k(Np^r)$ are $\Zp$-free.
\end{proposition}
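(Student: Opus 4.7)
My plan is to verify the three components of the proposition---well-definedness, freeness, and bijectivity---in sequence.

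For well-definedness, I would use the commutation relations $T_p W_{Np^r} = W_{Np^r}T_p^*$ and $T_p^* W_{Np^r} = W_{Np^r}T_p$ from \S\ref{sec:adjoint_L_values} (which hold because $\tau_{Np^r}$ normalizes $\Gamma_1(Np^r)$). Iterating and taking the $p$-adic limit along $n=r!$ in $\End_{\Zp}(\HH^1_k(Np^r))$ yields the operator identities
\[ e\circ W_{Np^r} = W_{Np^r}\circ e^*, \qquad e^*\circ W_{Np^r} = W_{Np^r}\circ e. \]
Hence, if $ex=x$, then $e^*(x|W_{Np^r}) = (ex)|W_{Np^r} = x|W_{Np^r}$, so the map indeed lands in $e^*\HH^1_k(Np^r)$.

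For the freeness, I would invoke Hida's classical freeness theorem for ordinary $p$-adic cohomology. The crucial input is that $e$ annihilates the $\Zp$-torsion of $\HH^1_k(Np^r)_{\Zp}$: this reflects the fact that, under the relevant hypotheses, the torsion lies in the non-ordinary part of the semisimplification of $T_p$. Given this, $e$ becomes an idempotent endomorphism of the $\Zp$-free quotient $\HH^1_k(Np^r)_{\Zp}/\mathrm{tors}$, and its image $e\HH^1_k(Np^r)$ is a $\Zp$-free direct summand. The same argument applies to $e^*\HH^1_k(Np^r)$.

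For bijectivity---the main obstacle---I would combine Poincar\'e duality with the non-degeneracy of Ohta's pairing. Since the Hecke adjoint of $T_n$ under the Poincar\'e pairing $(\cdot,\cdot)_\pi$ is $T_n^*$, the projectors $e$ and $e^*$ are mutually adjoint, so $(\cdot,\cdot)_\pi$ descends (after killing torsion via $e$ and $e^*$) to a perfect $\Zp$-duality $e\HH^1_k(Np^r)\times e^*\HH^1_k(Np^r)\to\Zp$, forcing the two ordinary modules to have equal $\Zp$-rank. Over $\bQ_p$, the Atkin-Lehner map is invertible because $W_{Np^r}^2$ is a non-zero scalar on $\HH^1_k(Np^r)_{\bQ_p}$, so combined with the freeness one obtains injectivity of $W_{Np^r}\colon e\HH^1_k(Np^r)\to e^*\HH^1_k(Np^r)$ with $p$-torsion cokernel. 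The delicate point is ruling out this cokernel over $\Zp$: the scalar $W_{Np^r}^2 = (-1)^k(Np^r)^{k-2}$ is not a $\Zp$-unit for $k\geq 3$, so one cannot invert $W_{Np^r}$ using itself. Instead, vanishing of the cokernel is equivalent, via the Poincar\'e duality above, to the non-degeneracy over $\Zp$ of the modified pairing $(x,y)\mapsto (x,y|W_{Np^r})_\pi$ on $e\HH^1_k(Np^r)\times e\HH^1_k(Np^r)$; this last statement is the finite-level incarnation of the perfectness of Ohta's $\Lambda$-adic pairing \cite{ohtacrelle} cited in the introduction.
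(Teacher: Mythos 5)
Your first two steps are unobjectionable: the intertwining of $e$ and $e^*$ by $W_{Np^r}$ follows from $T_pW_{Np^r}=W_{Np^r}T_p^*$ exactly as you say, and for $\Zp$-freeness you appeal to Hida, which is all the paper itself does (its entire proof is the citation of \cite[Cor.\ 3.2]{hidamodules} and \cite[(2.4) p.\ 347]{hidamodules}). The problem is the bijectivity step. You assert that, after killing torsion, $(\cdot,\cdot)_\pi$ ``descends to a perfect $\Zp$-duality'' between $e\HH^1_k(Np^r)$ and $e^*\HH^1_k(Np^r)$. This is not formal. The coefficient pairing $[\cdot,\cdot]$ on $\mathrm{TSym}^{k-2}(\Zp^{\oplus 2})$ has antidiagonal Gram matrix with entries $\pm\binom{k-2}{i}$, so it is non-degenerate but \emph{not} unimodular whenever some $\binom{k-2}{i}$ is divisible by $p$; consequently the weight-$k$ Poincar\'e pairing over $\Zp$, even modulo torsion, is not perfect, and adjointness of the idempotents $e,e^*$ only yields a non-degenerate pairing on the ordinary parts with a possibly nontrivial $p$-power discriminant --- which is precisely the cokernel you need to rule out. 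The fact that this duality \emph{is} perfect on the ordinary/anti-ordinary parts is a theorem of Hida--Ohta of the same depth as the proposition, proved by reducing to weight $2$ (where $W_{Np^r}^2$ is a unit and the statement is elementary) through the control theorem; it cannot be obtained by the descent you describe.

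The final appeal to Ohta is also close to circular. As the paper's own sketch in \S\ref{sec:interpolation_petersson} indicates, the perfectness of Ohta's $\Lambda$-adic pairing is deduced from the perfectness of Poincar\'e duality on $\HH^1_2(Np)$ together with Hida's control theorem, and already at weight $2$ it is the instance of the present proposition (that $W_{Np}$ identifies the $e$- and $e^*$-parts integrally) which makes the twisted pairing on the ordinary part perfect. Moreover, the specialization \eqref{eq:ota_pairing_interpolation} of Ohta's pairing lives on $e^*\HH^1_k(Np^r)\times e^*\HH^1_k(Np^r)$ and involves $(T_p^*)^r$ and $W_{Np^r}$; converting its perfectness into perfectness of $(x,y)\mapsto(x,y|W_{Np^r})_\pi$ on $e\HH^1_k(Np^r)\times e\HH^1_k(Np^r)$ again requires the integral comparison between the $e$- and $e^*$-parts that you are trying to establish. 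So as written the argument either assumes an unproved perfectness statement or presupposes the proposition. The honest routes are either to quote Hida directly, as the paper does, or to reduce to weight $2$ via the weight-independence (control) isomorphisms for ordinary cohomology and verify the compatibility of $W_{Np^r}$ with those isomorphisms --- which is, in substance, Hida's proof.
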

\begin{proof}
	See \cite[Corollary 3.2]{hidamodules} and \cite[(2.4) p.347]{hidamodules}.
\end{proof}
In particular, we have $e\gh(Np^r)=e^*\gh^*(Np^r)$ via $T_n \leftrightarrow T^*_n$.
Poincaré duality yields a canonical identification 
\[ \HH_1(X_r(\bC),\Zp) = \HH^1 (X_r(\bC),\Zp), \]
which is compatible with the Hecke action, in the sense that the action of $T_n$ and $\langle d \rangle$ on the left hand side corresponds to that of $T_n^*$ and $\langle d \rangle^{-1}$ on the right hand side respectively. By functoriality of (co)homology, we obtain 
\[ \varprojlim_r e \HH_1(X_r(\bC),\Zp) =\varprojlim_r e^* \HH^1 (X_r(\bC),\Zp) =: e^* \HH^1 (Np^\infty), \]
as Hecke-modules, where the transition maps on cohomology are the trace maps. Since the adjoint Hecke operators are compatible with trace maps in cohomology, $e^* \HH^1 (Np^\infty)$ inherits a structure of module over 
\[e^*\gh^*(Np^\infty)=\varprojlim_r e^*\gh_2^*(Np^r).\]
We will primarily be interested in $e^*\gh^*(Np^\infty)$, but note that it is canonically isomorphic to $e\gh(Np^\infty)=\varprojlim_r e\gh_2(Np^r)$. 

The action of inverse diamond operators endows the ring $e^*\gh(Np^\infty)$ with a structure of algebra over $\tilde{\Lambda}=\varprojlim_r\Zp[(\bZ/Np^r)^\times]$. Letting $U_r=1+p^r\Zp$ for all $r\geq 1$, we have $\tilde{\Lambda}=\Lambda[(\bZ/Np)^\times]$ where we put 
\[\Lambda=\Zp[[U_1]]=\varprojlim_r \Zp[U_1/U_r],\]
and $e^*\gh(Np^\infty)$ is known to be finite free over $\Lambda$ by \cite[Thm. 3.1]{hida1986galois}.

Let $\bI$ be a finite flat extension of $\Lambda$ and let $\gamma$ be a topological generator of $U_1$. We identify via $\bC\simeq \ob{\bQ}_p$ Dirichlet characters of $p$-power order and conductor with elements of $\widehat{U_1}:=\Hom_{\textrm{cts}}(U_1,\ob{\bQ}_p^\times)=\Hom_{\Zp-\textrm{alg}}(\Lambda,\ob{\bQ}_p)$. The set of arithmetic points of $\bI$ is defined as 
\[\cX(\bI)=\left\{ \nu\in\Hom_{\Lambda}(\bI,\ob{\bQ}_p) \ | \ \exists k\geq 2, \ \exists \varepsilon\in \widehat{U_1},\ \nu(\gamma)=\gamma^{k-2}\epsilon(\gamma) \right\}.\]
We refer to the pair $(k,\epsilon)$ in the definition of $\nu\in\cX(\bI)$ as the weight of $\nu$, and we define a finite flat extension of $\Zp$ by letting $\cO_\nu=\nu(\bI)$. We further denote by $S(N,\bI)$ the space of ordinary $\bI$-adic cusp forms of tame level $N$. Recall that $S(N,\bI)$ is spanned over $\bI$ by formal $q$-expansions $F(q)=\sum_{n\geq 1} a_nq^n\in\bI[[q]]$ with the property that there exists a Dirichlet character $\theta$ of level dividing $Np$ such that, for all $\nu\in\cX(\bI)$ of weight $(k,\varepsilon)$, the specialization of $F$ at $\nu$ yields
\begin{equation}\label{eq:specialisation_Hida_family}
	F_\nu:=\sum_{n\geq 1} \nu(a_n)q^n\in e S_k(Np^{r_\nu},\theta\varepsilon\omega^{2-k}). 
\end{equation}
Here, $\omega$ is the Teichmüller character, $p^{r_\nu}=\max(p,C(\varepsilon))$ (and $C(\varepsilon)$ is the conductor of $\varepsilon$), and $\cO_\nu$ is seen inside $\bC$. 

On $S(N,\bI)$ there is an action of Hecke operators $T_n$ which is compatible with specialization maps. More precisely, $S(N,\bI)=S(N,\Lambda)\otimes \bI$ and the $\Lambda$-subalgebra of $\End_\Lambda(S(N,\Lambda))$ generated by the $T_n$'s is canonically isomorphic to $e\gh(Np^\infty)_{\Zp}$. Furthermore, for all $k\geq 2$ and $r\geq 1$, 
\begin{align*}
	e\gh(Np^\infty) \otimes_{\Lambda} \Lambda/(\omega_{k,r}) &\simeq e\gh_k(Np^r),
\end{align*}
by Hida's control theorem \cite[Thm. 1.2]{hida1986galois}, where $\omega_{k,r}\in\Lambda$ is defined by $\omega_{k,r}(\gamma)=[\gamma]^{p^{r-1}}-\gamma^{(k-2)p^{r-1}}$. In particular, $e\gh(Np^\infty) \simeq \varprojlim_r e\gh_k(Np^r)$ for all integers $k\geq 2$.

A primitive Hida family is a $q$-expansion $F$ as above which is a normalized eigenform (\textit{i.e.}, $a_1=1$, $T_nF=a_nF$ for all $n\geq 1$) and such that $F_{\nu_0}$ is a primitive newform for at least one $\nu_0\in \cX(\bI)$. The character $\theta$ in \eqref{eq:specialisation_Hida_family}, called the tame character of $F$, is necessarily even, and we have $a_p\in \bI^\times$. Moreover, given $\nu\in\cX(\bI)$ of weight $(k,\varepsilon)$, if $F_\nu^\circ$ denotes the newform associated with $F_\nu$, then the following is known. If $p$ divides $C(\theta\varepsilon\omega^{2-k})$, then $F_\nu=F_\nu^\circ$. Otherwise, $\varepsilon=1$, $\theta\omega^{2-k}$ is of prime-to-$p$ conductor and either $F_\nu=F_\nu^\circ$ and $k=2$, or 
\[F_\nu(q)=F_\nu^\circ(q)-\beta_\nu F_\nu^\circ(q^p),\]
where $\beta_\nu$ is the non-unit root of the polynomial $X^2-\nu(a_p)X+\theta\omega^{2-k}(p)p^{k-1}$ (\cite[Thm. 4.1]{hidaAIF}).

Every normalized eigenform $F(q)=\sum_{n\geq 1}a_nq^n \in S(N,\bI)$ gives rise to a homomorphism $\lambda_F\colon e\gh(Np^\infty) \to \bI$ sending $T_n$ to $a_n$. The kernel $\ga$ of $\lambda_F$ is a minimal prime, and is contained in a unique maximal prime $\gm$ of $e\gh(Np^\infty)$. One may assume that the ring of coefficients of $F$ is an integrally closed local domain by taking $\bI$ to be the normalization of $\bT/\ga$, where 
\[\bT:=e\gh(Np^\infty)_\gm.\]

Let $\bT'$ be the subalgebra of $\bT$ generated over $\Lambda$ by $T_n$ for all $n\geq1$ coprime with $Np$. Then $\bT'$ is reduced and it is finitely generated and torsion-free over $\Lambda$. Let 
\[\rho_\gm \colon G_{\bQ} \to \GL_2(\bT'\otimes_{\Lambda}\Frac(\Lambda))\]
be Hida's big ordinary representation and let $\ob{\rho}_\gm$ be (the semisimplification of) its residual representation modulo $\gm$. When $\ob{\rho}_\gm$ is absolutely irreducible, it is known by \cite[Théorème 1]{nyssen1996pseudo} that $\rho_\gm$ is conjugate to a $\bT'$-valued representation. 

In what follows it is more convenient to work with the anti-ordinary big Hecke algebra $e^*\gh^*(Np^\infty)$. Since $e^*\gh^*(Np^\infty)=e\gh(Np^\infty)$ canonically, $F$ gives rise to a triple $(\lambda_F^*,\ga^*,\gm^*)$ defined in the obvious way. 
\begin{proposition}\label{prop:EPW}
	Let $\bT^*=e^*\gh^*(Np^\infty)_{\gm^*}$ be a local component of the big Hecke algebra such that $\ob{\rho}_\gm$ satisfies \eqref{tag:cr}. Then $e^*\HH^1(Np^\infty)^\pm_{\gm^*}$ is free of rank $1$ over $\bT^*$.
\end{proposition}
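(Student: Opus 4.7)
The plan is to deduce this from Emerton--Pollack--Weston's theorem on the freeness of ordinary homology, by transferring along the Poincaré-duality isomorphism already recorded in this section.

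First, I would set up the comparison between the ordinary homology tower and the anti-ordinary cohomology tower. Poincaré duality on each compactified modular curve $X_r$ yields a canonical identification $\HH_1(X_r(\bC),\Zp) = \HH^1(X_r(\bC),\Zp)$ under which the Hecke action is twisted by the involution $T_n \leftrightarrow T_n^*$ and $\langle d\rangle\leftrightarrow\langle d\rangle^{-1}$. Passing to the inverse limit along trace maps (= push-forwards on homology) and applying Hida's idempotents gives the Hecke-equivariant identification
\[
\varprojlim_r e\HH_1(X_r(\bC),\Zp) \;=\; \varprojlim_r e^*\HH^1(X_r(\bC),\Zp) \;=\; e^*\HH^1(Np^\infty),
\]
as noted right before the statement. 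The same twist identifies the Hecke algebras $e\gh(Np^\infty)$ and $e^*\gh^*(Np^\infty)$, so the maximal ideals $\gm$ and $\gm^*$ correspond, giving a canonical isomorphism $\bT \simeq \bT^*$ of $\Lambda$-algebras. Moreover, the complex conjugation $[\Gamma\iota\Gamma]$ commutes with this identification, so the $\pm 1$-eigenspaces on either side match.

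Next, I would verify that the residual representation $\ob\rho_\gm$ is unchanged by the twist $T_n\leftrightarrow T_n^*$: any arithmetic specialization of $\gm^*$ corresponds to a system of eigenvalues for the adjoint Hecke operators, i.e.\ to the newform obtained by applying an Atkin--Lehner-type involution, whose Galois representation is unchanged modulo the maximal ideal of the residue field. In particular (CR) holds for $\gm^*$ iff it holds for $\gm$.

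Finally, I would invoke the main freeness theorem of Emerton--Pollack--Weston \cite{emerton2006variation}: under \eqref{tag:cr}, the module $e\HH_1(Np^\infty)_{\gm}^\pm = \bigl(\varprojlim_r e\HH_1(X_r(\bC),\Zp)\bigr)_\gm^\pm$ is free of rank one over $\bT$. Translating via the canonical identifications above yields that $e^*\HH^1(Np^\infty)_{\gm^*}^\pm$ is free of rank one over $\bT^*$, as desired.

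The only real subtlety I anticipate is the bookkeeping of the $T_n\leftrightarrow T_n^*$ involution: one must ensure that the localization at $\gm^*$ on the cohomology side really does correspond to localization at $\gm$ on the homology side (equivalently, that the residual representation is indeed the same $\ob\rho$), and that the $\pm$-decomposition is preserved under Poincaré duality. Both are formal once stated carefully, so the proof is essentially a citation of EPW combined with Hida's duality (Proposition \ref{prop:hida}).
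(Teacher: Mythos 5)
Your proof is correct and follows essentially the same route as the paper, which simply observes that the statement is a reformulation in cohomological terms of \cite[Prop.~3.3.1]{emerton2006variation} via the Poincaré-duality identification $\varprojlim_r e\HH_1(X_r(\bC),\Zp)=e^*\HH^1(Np^\infty)$ set up just before the proposition. Your additional bookkeeping of the $T_n\leftrightarrow T_n^*$ twist and the $\pm$-eigenspaces is exactly the (implicit) content of that reformulation; note only that even if duality were to interchange the two signs, the conclusion is symmetric in $\pm$, so nothing is at stake there.
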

\begin{proof}
	This is a reformulation in terms of cohomology groups of \cite[Prop. 3.3.1]{emerton2006variation}.
\end{proof}

\subsection{Interpolation of Petersson norms}\label{sec:interpolation_petersson}
We keep the notations of the preceding sections. For any $k\geq 2$ and $r\geq 1$, we see $e^*\HH_k^1(Np^r)$ as a $\Lambda$-module by letting $d\in U_1$ act  by $ d^{k-2}\langle d \rangle^{-1}$. 
Then $\omega_{k,r}$ acts trivially on it, and \cite[Thm. 1.4.3 (ii)]{ohtacrelle} yields a $\Lambda/\omega_{k,r}\Lambda$-linear isomorphism
\begin{equation}\label{eq:control_theorem_cohomology}
	e^*\HH^1(Np^\infty) \otimes_\Lambda \Lambda/(\omega_{k,r}) \simeq e^*\HH_k^1(Np^r).
\end{equation}
In \cite[Thm. 4.2.5]{ohtacrelle}, Ohta constructed a $\Lambda$-linear pairing on $e^*\HH^1(Np^\infty)$ which interpolates the Poincaré pairings at level $Np^r$ for all $r\geq 1$. 

Recall that $e^*\gh^*(Np^\infty)$ is a complete semi-local ring, hence it is the finite product of its localizations at maximal ideals. It is enough for our purpose to fix one of its local components $\bT^*=e^*\gh^*(Np^\infty)_{\gm^*}$ which we see as a $\Lambda$-algebra. 
\begin{theorem}[Ohta]
	There exists a pairing 
	\begin{equation}\label{eq:ohta_pairing}
		(\cdot,\cdot)_{\Lambda} \colon e^*\HH^1(Np^\infty)_{\gm^*} \times e^*\HH^1(Np^\infty)_{\gm^*} \to \Lambda
	\end{equation}
	fitting, for all $k\geq 2$ and $r\geq 1$, in the commutative diagram
	\[\begin{tikzcd}
		e^*\HH^1(Np^\infty)_{\gm^*} \times e^*\HH^1(Np^\infty)_{\gm^*} \rar\dar & \Lambda \dar \\ 
		 e^*\HH_k^1(Np^r) \times e^*\HH_k^1(Np^r) \rar & \Zp[U_1/U_r],
	\end{tikzcd}\]
	where the bottom horizontal map is given by
	\begin{equation}\label{eq:ota_pairing_interpolation}
			(x,y)  \mapsto (-1)^{k}\cdot \sum_{d\in U_1/U_r} (dNp^r)^{2-k}\cdot (x,y|(T_p^*)^r|W_{Np^r}|\langle d \rangle^{-1})_\pi \cdot [d],
	\end{equation}
	the left vertical map is induced by \eqref{eq:control_theorem_cohomology} and the right vertical map sends $d\in U_1$ to $d^{k-2}\cdot[d]$. Moreover, $(\cdot,\cdot)_{\Lambda}$ is a $\Lambda$-bilinear perfect pairing satisfying $(x|T_n^*,y)_{\Lambda}=(x,y|T_n^*)_{\Lambda}$ for all integers $n\geq 1$.

\end{theorem}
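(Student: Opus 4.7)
The plan is to build the pairing first at finite level and in weight $2$, then pass to the limit, and finally deduce the interpolation formula in higher weight from Ohta's control theorem. Specifically, for each $r\geq 1$, I would define a $\Zp$-bilinear pairing
\[
(\cdot,\cdot)_r\colon e^*\HH^1_2(Np^r)\times e^*\HH^1_2(Np^r)\longrightarrow \Zp[U_1/U_r]
\]
by the formula $(x,y)_r=\sum_{d\in U_1/U_r}(x, y|(T_p^*)^r|W_{Np^r}|\langle d\rangle^{-1})_\pi\cdot [d]$, which is the specialisation at $k=2$ of \eqref{eq:ota_pairing_interpolation}. The twist by the inverse diamond operators is what produces a value in the group algebra of $U_1/U_r$, while the insertion of $(T_p^*)^r$ is designed to absorb the degree of the covering $X_{r+1}\to X_r$ on the anti-ordinary part.

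The core technical step is to show that the family $\{(\cdot,\cdot)_r\}_r$ is compatible with the trace maps $e^*\HH^1_2(Np^{r+1})\to e^*\HH^1_2(Np^r)$ and with the natural projections $\Zp[U_1/U_{r+1}]\to\Zp[U_1/U_r]$. This amounts to a direct but delicate computation combining four ingredients: the adjointness of trace and pullback for the Poincar\'e pairing, the behavior of $W_{Np^r}$ under the tower (which introduces a factor of $p$ that is swallowed by one copy of $T_p^*$ after restriction), the relation between diamond operators at level $r+1$ and level $r$, and the fact that on the $e^*$-part the operator $T_p^*$ is a unit. This verification is the main obstacle; it is essentially a local calculation on the $p$-old/$p$-new decomposition of the tower and requires tracking signs carefully across the two changes of level. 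Granted this compatibility, the inverse limit yields a $\Lambda$-bilinear pairing $(\cdot,\cdot)_\Lambda$ on $e^*\HH^1(Np^\infty)$ with values in $\Lambda$, which we then restrict to the local component $e^*\HH^1(Np^\infty)_{\gm^*}$ to obtain \eqref{eq:ohta_pairing}.

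Next, I would verify the interpolation diagram for arbitrary weight $k\geq 2$ and level $Np^r$. By the control theorem \eqref{eq:control_theorem_cohomology} the left vertical arrow is surjective, so it suffices to check that $(\cdot,\cdot)_\Lambda\bmod\omega_{k,r}$ matches the formula \eqref{eq:ota_pairing_interpolation}. The $\Lambda$-structure on $e^*\HH^1_k(Np^r)$ is defined so that $d\in U_1$ acts by $d^{k-2}\langle d\rangle^{-1}$, so reducing the limit pairing produces the factor $d^{k-2}$, which when combined with the $d$-indexed sum and the normalising constant $(Np^r)^{2-k}$ gives exactly the stated weight-$k$ formula; the sign $(-1)^k$ records the comparison between the weight-$2$ Poincar\'e pairing used in the construction and the weight-$k$ pairing of Proposition \ref{prop:calcul_poincare_pairing}. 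Self-adjointness of $T_n^*$ is inherited directly from the finite-level adjointness of $T_n^*$ under $(\cdot,\cdot)_\pi$ (recalled in \S\ref{sec:adjoint_L_values}), since $T_n^*$ commutes with $W_{Np^r}$, $\langle d\rangle^{-1}$ and $T_p^*$.

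Finally, perfectness can be extracted from the fact that after tensoring with $\Frac(\Lambda)$ the pairing becomes non-degenerate (as the finite-level Poincar\'e pairings are perfect on the $e^*$-part, which is $\Zp$-free by Proposition \ref{prop:hida}), combined with the freeness statement \eqref{eq:control_theorem_cohomology} and a standard Nakayama/specialisation argument: a $\Lambda$-linear map between two finitely generated $\Lambda$-modules that becomes an isomorphism modulo $\omega_{k,r}$ for one $(k,r)$ with free target and source is already an isomorphism. The hardest part of this whole program remains Step 2, the trace compatibility on the tower, which is exactly where Ohta's \cite{ohtacrelle} intervenes; the rest of the argument is a formal consequence of the control theorem and the finite-level duality recalled in \S\ref{sec:adjoint_L_values}.
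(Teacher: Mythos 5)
Your proposal follows essentially the same route as the paper: the paper simply invokes Ohta's Theorem 4.2.5 and Corollary 4.2.8 of \cite{ohtacrelle} for the construction and interpolation (noting only that the factor $(Np^r)^{2-k}$ reflects Ohta's normalization of $W_{Np^r}$), and deduces perfectness from the perfectness of the Poincaré pairing on $\HH^1_2(Np)$ together with Hida's control theorem, exactly as in your final step. The trace-compatibility you identify as the main obstacle is precisely the content of the cited result of Ohta, so deferring it there is consistent with the paper's own treatment.
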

\begin{proof}
	See \cite[Thm. 4.2.5 and Cor. 4.2.8]{ohtacrelle}. The factor $(Np^r)^{2-k}$ comes from the difference of normalizations in the definition of $W_{Np^r}$ (see \cite[(4.1.3)]{ohtacrelle}). The perfectness of Ohta's pairing follows from that of Poincaré's pairing on $\HH^1_2(Np)=\HH^1(X_1(Np)(\bC),\Zp)$ and Hida's control theorem. 
\end{proof}

Choose a finite flat extension $\bI\subset \ob{\Frac(\Lambda)}$ of $\Lambda$ containing the Fourier coefficients of all the normalized eigenforms in $S(N,\ob{\Frac(\Lambda)})$ with residual Galois representation isomorphic to $\ob{\rho}_\gm$. Replacing $\bI$ with its integral closure in $\ob{\Frac(\Lambda)}$ if necessary, we assume that $\bI$ is normal. Let 
\[\bT_\bI^*=e^*\gh^*(Np^\infty)_{\gm^*}\otimes_{\Lambda}\bI, \qquad \cM^\pm:=e^*\HH^1(Np^\infty)^\pm_{\gm^*}\otimes_{\Lambda}\bI.\]
Given a normalized eigenform $F(q)=\sum_{n\geq 1}a_nq^n\in S(N,\bI)$ with residual Galois representation $\ob{\rho}_\gm$, the extension of scalars of $\lambda_F^*$ to $\bI$ induces a $\bI$-linear surjection $\bT^*_\bI \twoheadrightarrow \bI$ which we still denote by $\lambda_F^*$. Let $\cK$ be the fraction field of $\bI$ and let $\bT_\cK^*=\bT_\bI^* \otimes_{\bI} \cK$. We make the following assumption on $F$: there exists an $\cK$-algebra decomposition
\begin{equation}
	\bT_\cK^* \simeq \cK \times X, \tag{pr}\label{tag:pr}
\end{equation}
where the first projection, say $e_F$, is induced by $\lambda_F^* \otimes \cK$. We further assume that
\begin{equation}
	\cM^\pm \text{ is $\bT_\bI^*$-free of rank 1}, \tag{fr}\label{tag:fr}
\end{equation}
and, following the notations of Appendix \ref{appendix}, we let $\cM^\pm_{\lambda_F^*}:=\cM^\pm \cap e_F(\cM^\pm \otimes_{\bI}\cK)$. In particular, $\cM^\pm_{\lambda_F^*}$ is $\bI$-free of rank $1$ by Lemma \ref{lem:appendix_direct_summand}.

\begin{remark}\label{rem:conditions_pr_and_fr}
	Condition \eqref{tag:pr} holds if $F$ is primitive of level $N$ by \cite[Thm. 4.2]{hidaAIF}. It also holds if $\bT_\cK^*$ is semi-simple (\textit{i.e.}, if $\bT^*$ is reduced), which happens for instance when $N$ is cube-free \cite[Cor. 1.3]{hida2013inventiones}. Condition \eqref{tag:fr} holds if $\ob{\rho}_\gm$ satisfies \eqref{tag:cr} by Proposition \ref{prop:EPW}.
\end{remark}
In the following definition, we write $(\cdot,\cdot)_\bI \colon \cM^+ \times \cM^- \to \bI$ for the $\bI$-linear extension of Ohta's pairing \eqref{eq:ohta_pairing}.

\begin{definition}\label{def:p_adic_adjoint_L_function}
	Assume \eqref{tag:pr} and \eqref{tag:fr} and let $\xi_F^\pm$ be a $\bI$-basis of $\cM^\pm_{\lambda_F^*}$. We define the $p$-adic adjoint $L$-function of $F$ by letting 
	\[L_p(\ad F)=(\xi^+_F,\xi^-_F)_\bI\in\bI.\]
For any $\nu \in\Hom(\bI,\ob{\bQ}_p)$, we shall write $L_p(\ad F,\nu):=\nu(L_p(\ad F))$.
\end{definition}

\begin{proposition}\label{prop:congruence_module_and_p_adic_L_functions}
		Assume \eqref{tag:pr} and \eqref{tag:fr}. Then the congruence module of $F$ is isomorphic to $\bI/L_p(\ad F)\bI$. 
\end{proposition}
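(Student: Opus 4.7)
The plan is to translate the perfectness of the $\bI$-linear extension of Ohta's pairing into a Gorenstein duality for $\bT_\bI^*$ over $\bI$, to exhibit a canonical generator $\tau_F\in\bT_\bI^*$ of $\Ann_{\bT_\bI^*}(\ker\lambda_F^*)$ (whose image under $\lambda_F^*$ generates the congruence ideal, by the formalism recalled in Appendix \ref{appendix}), and finally to identify $(\xi_F^+,\xi_F^-)_\bI$ with $\lambda_F^*(\tau_F)$ up to a unit of $\bI$.

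To set up the duality, fix $\bT_\bI^*$-bases $\eta^\pm$ of $\cM^\pm$ as allowed by \eqref{tag:fr} and put $L(t):=(\eta^+,t\eta^-)_\bI\in\bI$. The self-adjointness $(x|T_n^*,y)_\bI=(x,y|T_n^*)_\bI$ makes
\[\phi\colon\bT_\bI^*\longrightarrow\Hom_\bI(\bT_\bI^*,\bI),\qquad t\longmapsto\bigl(s\mapsto L(ts)\bigr)\]
a morphism of $\bT_\bI^*$-modules, and its bijectivity exactly encodes the perfectness of $(\cdot,\cdot)_\bI$ restricted to $\cM^+\times\cM^-$. Granting this, I set $\tau_F:=\phi^{-1}(\lambda_F^*)$, so that $L(\tau_F t)=\lambda_F^*(t)$ for every $t\in\bT_\bI^*$; in particular $L(\tau_F)=1$.

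An elementary argument using the injectivity of $\phi$ next identifies $\phi\bigl(\Ann_{\bT_\bI^*}(\ker\lambda_F^*)\bigr)$ with $\Hom_\bI(\bT_\bI^*/\ker\lambda_F^*,\bI)=\bI\cdot\lambda_F^*$, so $\Ann_{\bT_\bI^*}(\ker\lambda_F^*)=\bI\tau_F$ and the congruence ideal of $\lambda_F^*$ equals $\lambda_F^*(\tau_F)\bI$. Since $\tau_F$ lies in this annihilator, $t\tau_F=\lambda_F^*(t)\tau_F$ for every $t$, so each $\tau_F\eta^\pm\in\cM^\pm$ is a $\lambda_F^*$-eigenvector and must lie in $\cM^\pm_{\lambda_F^*}=\bI\xi_F^\pm$. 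Writing $\tau_F\eta^\pm=\mu^\pm\xi_F^\pm$ with $\mu^\pm\in\bI$, the identity $L(\tau_F)=1$ rewrites as $\mu^-(\eta^+,\xi_F^-)_\bI=1=\mu^+(\xi_F^+,\eta^-)_\bI$, forcing $\mu^\pm\in\bI^\times$. A final bilinearity-plus-self-adjointness computation yields
\[\mu^+\mu^-\cdot L_p(\ad F)=(\tau_F\eta^+,\tau_F\eta^-)_\bI=(\eta^+,\tau_F^2\eta^-)_\bI=L(\tau_F^2)=\lambda_F^*(\tau_F),\]
so $L_p(\ad F)\bI=\lambda_F^*(\tau_F)\bI$ is precisely the congruence ideal, giving the required isomorphism.

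The only step that genuinely requires care is the bijectivity of $\phi$, equivalently the $\bI$-perfectness of the pairing $\cM^+\times\cM^-\to\bI$. Given the perfectness of \eqref{eq:ohta_pairing}, this reduces to the vanishing of the diagonal pieces $(\cM^+,\cM^+)_\bI=(\cM^-,\cM^-)_\bI=0$, which should follow from how the Poincaré pairing $(\cdot,\cdot)_\pi$ interacts with the complex conjugation $[\Gamma\iota\Gamma]$ and with $W_{Np^r}$ in the interpolation formula \eqref{eq:ota_pairing_interpolation}; everything else is a formal transport of the classical fact that congruence ideals are computed by specializing Gorenstein duality elements.
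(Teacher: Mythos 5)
Your argument is correct, but it reaches the conclusion by a different (though closely related) route than the paper. The paper's own proof is a two-line reduction: the congruence module of $F$ is $C_0(\lambda_F)$, which equals $C_0(\lambda_F^*)$ under the canonical identification $\bT_\bI\simeq\bT^*_\bI$, and then Corollary \ref{coro:appendix_generator_congruence_module} of the appendix, applied with $M=\cM^+$, $N=\cM^-$, $\cT=\bT^*_\bI$ and the restricted Ohta pairing, gives $C_0(\lambda_F^*)\simeq\bI/(\xi_F^+,\xi_F^-)_\bI\,\bI$ at once. You instead use \eqref{tag:fr} to transport the pairing through the bases $\eta^\pm$ to a Gorenstein duality $\phi\colon\bT^*_\bI\simeq\Hom_\bI(\bT^*_\bI,\bI)$ and work with the congruence element $\tau_F=\phi^{-1}(\lambda_F^*)$; your identifications $\Ann_{\bT^*_\bI}(\ker\lambda_F^*)=\bI\tau_F$, $\tau_F\eta^\pm=\mu^\pm\xi_F^\pm$ with $\mu^\pm\in\bI^\times$ (note $e_F\tau_F=\tau_F$, which is why $\tau_F\eta^\pm$ lands in $\cM^\pm_{\lambda_F^*}$), and $L(\tau_F^2)=\lambda_F^*(\tau_F)$ are all valid, and together with the fact that, under \eqref{tag:pr}, $\Ann_{\bT^*_\bI}(\ker\lambda_F^*)=(\bT^*_\bI)_{\lambda_F^*}$ and $(\bT^*_\bI)^{\lambda_F^*}\simeq\bI$, so that $C_0(\lambda_F^*)\simeq\bI/\lambda_F^*\bigl(\Ann_{\bT^*_\bI}(\ker\lambda_F^*)\bigr)$ (this cyclicity is what upgrades your statement about the congruence ideal to the stated isomorphism of congruence modules), this proves the proposition. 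In effect you re-prove Corollary \ref{coro:appendix_generator_congruence_module} in the case at hand via the Gorenstein trace element, which makes explicit that $L_p(\ad F)$ generates $\lambda_F^*(\Ann_{\bT^*_\bI}(\ker\lambda_F^*))$; the paper's appendix performs the same bookkeeping by orthogonality computations with the idempotents $e_1,e_2$, which avoids choosing global generators $\eta^\pm$ and applies verbatim to any pair of rank-one free modules with an equivariant perfect pairing. The one step you leave as a sketch, the perfectness of the restriction of Ohta's pairing to $\cM^+\times\cM^-$, is needed equally by the paper (it is implicit in the application of the appendix and asserted explicitly in the Gorenstein discussion after Proposition \ref{prop:pairing_fukaya_kato}); it does hold, since complex conjugation reverses the orientation of the modular curve, hence acts by $-1$ on the target of $(\cdot,\cdot)_\pi$, while commuting with the operators $(T_p^*)^r$, $W_{Np^r}$, $\langle d\rangle^{-1}$ occurring in the weight-two specializations \eqref{eq:ota_pairing_interpolation}, so the same-sign components are isotropic and perfectness of the full pairing descends to $\cM^+\times\cM^-$ (base change from $\Lambda$ to $\bI$ being harmless as the modules are finite free).
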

\begin{proof}
	The congruence module of $F$ is, by definition, the module $C_0(\lambda_F)$ over $\bT_\bI=e\gh(Np^\infty)_{\gm}\otimes_\Lambda \bI$ defined in Appendix \ref{appendix}, where $\lambda_F \colon \bT_\bI \twoheadrightarrow \bI$ is the $\bI$-algebra homomorphism sending $T_n$ to $a_n$. Under the identification $\bT=\bT^*$, we then have $C_0(\lambda_F)=C_0(\lambda_F^*)$, which, in turn, is isomorphic to $\bI/L_p(\ad F)\bI$ by Corollary \ref{coro:appendix_generator_congruence_module}.
\end{proof}
Given a primitive Hida family $F$, we now express the special values of $L_p(\ad F)$ at arithmetic weights in terms of adjoint $L$-values. We still assume \eqref{tag:fr}. 

We first define $p$-normalized periods as follows. Let $\nu\in\cX(\bI)$ be of weight $(k,\varepsilon)$, let $r=r_\nu$ and fix a sign $\pm$. Let $\bT^*_\nu$ be the local factor of $e^*\gh_k(Np^r)_{\cO_\nu}$ through which $\lambda^*_{F_\nu}$ factors, and let $\cM^\pm_\nu$ be the corresponding direct summand of $e^*\HH^1_k(Np^r)_{\cO_\nu}$. 
Let also $a$ be the sign of $(-1)^k$ and $\pm a$ be the sign of $\pm (-1)^k$.
We have
\begin{equation}\label{eq:isom_specialization_xi}
	\cM^\pm_{\lambda^*_F} \otimes_{\bI,\nu} \cO_\nu \simeq (\cM^\pm_\nu)_{\lambda^*_{F_\nu}} \simeq (e\HH^1_k(Np^r)^{\pm a}_{\cO_\nu})_{\lambda_{F_\nu}},
\end{equation}
where the first map is the specialization map and the second one is the map $W_{Np^r}^{-1}$. The first isomorphism is Hida's control theorem \cite[Thm. 4.2]{hidaAIF}, while the second follows from Proposition \ref{prop:hida}. Note that the last module in \eqref{eq:isom_specialization_xi} coincides with that in \eqref{eq:eigenspace_definition_xi_f} with $M=Np^r$ and $f=F_\nu$.
\begin{definition}\label{def:family_canonical_periods}
	For each $\nu\in\cX(\bI)$, we define $\xi_{F_\nu}^\pm$ as the image of $\xi^\pm_F$ under the map in \eqref{eq:isom_specialization_xi}, and we let $\Omega^\pm_{F_\nu}\in\bC^\times$ be the associated $p$-normalized canonical period of Definition \ref{def:canonical_periods}. In other words, $\xi_{F_\nu}^{\pm}|W_{Np^{r_\nu}}$ is the specialization of $\xi_{F}^{\pm a}$ at $\nu$ (where $a$ is the sign of $(-1)^k$), and $\delta^\pm(F_\nu)=\Omega^\pm_{F_\nu}\cdot\xi_{F_\nu}^\pm$.
\end{definition}
\begin{remark}\label{rem:periods_associated_newforms}
	For a weight $\nu\in\cX(\bI)$ such that $F_\nu\in S_k(Np)$ is $p$-old, the $p$-normalized canonical period of $F_\nu^\circ$ can be defined by simply letting $\Omega^\pm_{F_\nu^\circ}=\Omega^\pm_{F_\nu}$. Indeed, by \cite[Prop. 7.3.1]{KLZcambridge} there is a ``$p$-stabilization isomorphism'' $(-)^\circ\colon\HH^1_k(Np)_\bC\simeq \HH^1_k(N)_\bC$ sending $\delta(F_\nu)$ to $\delta(F_\nu^\circ)$ and identifying the lattices $(\HH^1_k(Np)_{\cO_\nu})_{\lambda_{F_\nu}}$ and $(\HH^1_k(N)_{\cO_\nu})_{\lambda_{F_\nu^\circ}}$. Therefore, the periods of $F_\nu$ and of $F_\nu^\circ$ with respect to $\xi_{F_\nu}^\pm$ and $(\xi_{F_\nu}^\pm)^\circ$ respectively are equal.
\end{remark}
\begin{theorem}\label{thm:calcul_valeurs_speciales}
	Assume \eqref{tag:fr} holds and suppose $F$ is primitive as before. Let $\nu\in\cX(\bI)$ be of weight $(k,\varepsilon)$, let $r=r_\nu$ and write the conductor of $F_\nu^\circ$ as $Np^{r_0}$. Put $\alpha_\nu=a_p(F_\nu)$, $\beta_\nu=\frac{(\theta\omega^{2-k}\varepsilon)(p)p^{k-1}}{\alpha_\nu}$ and
	\[\cE_p(\ad F_\nu^\circ)=\alpha_\nu(p-1)(1-\frac{\beta_\nu}{\alpha_\nu})(1-p^{-1}\frac{\beta_\nu}{\alpha_\nu}) \]
	if $F_\nu\neq F_\nu^\circ$ and $\cE_p(\ad F_\nu^\circ)=1$ otherwise. We have
	\begin{equation}
		L_p(\ad F,\nu)=p^{r-1}\alpha_\nu^r w_\nu\cE_p(\ad F_\nu^\circ)\cdot i(-2i)^{k}\dfrac{ \langle F_\nu^\circ,F_\nu^\circ \rangle_{\Gamma_1(Np^{r_0})}}{\Omega^+_{F^\circ_\nu}\Omega^-_{F^\circ_\nu}},
	\end{equation}
	where $w_\nu\in \ob{\bQ}^\times$ is given by $F_\nu^\circ|W_{Np^{r_0}}=w_\nu (F_\nu^\circ)^c$ and $\Omega_{F^\circ_\nu}^\pm$ is as in Definition \ref{def:family_canonical_periods} and Remark \ref{rem:periods_associated_newforms}.
\end{theorem}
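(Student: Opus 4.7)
The strategy is to specialize the defining equation $L_p(\ad F)=(\xi_F^+,\xi_F^-)_\bI$ through Ohta's interpolation formula, then convert the resulting Poincaré pairing on $e^*\HH^1_k(Np^r)$ into a Petersson inner product via Proposition \ref{prop:calcul_poincare_pairing}, and finally compare $\langle F_\nu,F_\nu\rangle$ with $\langle F_\nu^\circ,F_\nu^\circ\rangle$ when $F_\nu$ is a $p$-stabilization.

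\emph{Step 1: Specialize Ohta's pairing.} Using the commutative diagram in the statement of Ohta's theorem together with the fact that $\nu$ sends $\gamma\in U_1$ to $\gamma^{k-2}\varepsilon(\gamma)$, I reduce modulo $\omega_{k,r}$ and evaluate the character $\varepsilon$ on $[d]$ to obtain
\[
L_p(\ad F,\nu)=(-1)^k(Np^r)^{2-k}\!\!\sum_{d\in U_1/U_r}\!\!\varepsilon(d)\,\bigl((\xi_F^+)_\nu,\,(\xi_F^-)_\nu\bigm|(T_p^*)^rW_{Np^r}\langle d\rangle^{-1}\bigr)_\pi,
\]
where $(\xi_F^\pm)_\nu \in e^*\HH^1_k(Np^r)_{\cO_\nu}$ denote the specializations. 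By Definition \ref{def:family_canonical_periods}, $(\xi_F^\pm)_\nu=\xi_{F_\nu}^{\pm a}|W_{Np^r}$ where $a$ is the sign of $(-1)^k$.

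\emph{Step 2: Exploit Hecke eigenrelations.} Because $\xi_{F_\nu}^{-a}$ is a $T_p$-eigenvector with eigenvalue $\alpha_\nu$ and the commutation rules $T_n^* W_M=W_MT_n$ and $\langle d\rangle W_M=W_M\langle d\rangle^{-1}$ hold, the insertion of $(T_p^*)^r W_{Np^r}$ produces the factor $\alpha_\nu^r$ and an extra $W_{Np^r}$. The diamond operator acts on $\xi_{F_\nu}^{-a}$ through the nebentype $\theta\varepsilon\omega^{2-k}$; since $\theta$ has prime-to-$p$ conductor and $\omega$ is trivial on $U_1$, orthogonality of characters on $U_1/U_r$ collapses the sum $\sum_d\varepsilon(d)(\theta\varepsilon\omega^{2-k})^{-1}(d)\langle d\rangle^{-1}$ to $|U_1/U_r|=p^{r-1}$. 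Applying $W_{Np^r}^2$, which acts on weight-$k$ forms with nebentype of sign $(-1)^k$ as multiplication by a harmless scalar $(Np^r)^{k-2}(-1)^k$, cancels the $(Np^r)^{2-k}$ in front.

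\emph{Step 3: From Poincaré to Petersson.} After these manipulations the Poincaré pairing reduces to $(\xi_{F_\nu}^{a}|W_{Np^r},\xi_{F_\nu}^{-a}|W_{Np^r})_\pi$, which via the modified pairing \eqref{eq:modified_pairing} and Proposition \ref{prop:calcul_poincare_pairing} equals a multiple of $\langle F_\nu,F_\nu^c|W_{Np^r}\rangle_{\Gamma_1(Np^r)}/(\Omega_{F_\nu}^+\Omega_{F_\nu}^-)$, producing the analytic factors $2$, $(2i)^{k-1}$ and the complex unit $i$ that together yield the final normalization $i(-2i)^k$.

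\emph{Step 4: Descent to the newform.} If $F_\nu=F_\nu^\circ$ (so $r=r_0$), Remark \ref{rem:periods_associated_newforms} gives $\Omega_{F_\nu}^\pm=\Omega_{F_\nu^\circ}^\pm$ and the relation $F_\nu^\circ|W_{Np^r}=w_\nu(F_\nu^\circ)^c$ rewrites $\langle F_\nu,F_\nu^c|W_{Np^r}\rangle$ in terms of $\langle F_\nu^\circ,F_\nu^\circ\rangle$ and $w_\nu$; the modified Euler factor $\cE_p(\ad F_\nu^\circ)$ is trivial here. Otherwise $r=1$, $r_0=0$, and a direct computation using $F_\nu(q)=F_\nu^\circ(q)-\beta_\nu F_\nu^\circ(q^p)$, together with the standard formula for the Petersson adjoint of $V_p$, yields
\[
\langle F_\nu,F_\nu\rangle_{\Gamma_1(Np)}=(p+1)(1-\beta_\nu/\alpha_\nu)(1-p^{-1}\beta_\nu/\alpha_\nu)\langle F_\nu^\circ,F_\nu^\circ\rangle_{\Gamma_1(N)},
\]
which, combined with the factor $\alpha_\nu^r=\alpha_\nu$ already extracted, is precisely $\cE_p(\ad F_\nu^\circ)$.

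\emph{Main obstacle.} The computational heart is the bookkeeping of signs, powers of $i$, and level-index factors across Steps 1--3 so that they collapse into the single scalar $i(-2i)^k$; in parallel, Step 4 requires the careful Petersson-product computation at two different levels in order to produce the complete Euler factor $\cE_p(\ad F_\nu^\circ)$ from the interplay of $(T_p^*)^r$, $W_{Np^r}$, and the $p$-stabilization map $V_p$.
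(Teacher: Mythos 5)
Your Steps 1--3 follow exactly the paper's route: specialize Ohta's interpolation formula \eqref{eq:ota_pairing_interpolation} at $\nu$, use the eigenrelations for $T_p$ and the diamond operators to collapse the sum to $p^{r-1}\alpha_\nu^r$, and apply Proposition \ref{prop:calcul_poincare_pairing} to land on the twisted Petersson product $\langle F_\nu,F_\nu^c|W_{Np^r}\rangle_{\Gamma_1(Np^r)}/(\Omega^+_{F_\nu}\Omega^-_{F_\nu})$. That part is fine in outline (the sign and $W^2$ bookkeeping you defer is also handled briskly in the paper, via the adjoint of $W$ and antisymmetry of \eqref{eq:modified_pairing}).

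The genuine gap is in Step 4. After Step 3 the quantity to evaluate is the \emph{twisted} product $\langle F_\nu, F_\nu^c|W_{Np^r}\rangle_{\Gamma_1(Np^r)}$, not the plain norm $\langle F_\nu,F_\nu\rangle_{\Gamma_1(Np)}$ that you compute. These are not related by the scalar $w_\nu$ in the $p$-old case: $W_{Np}$ does not act on the $p$-stabilization $F_\nu$ by a pseudo-eigenvalue (it essentially interchanges the $\alpha$- and $\beta$-stabilizations of $F_\nu^\circ|W_N$), so your substitution never produces the factor $w_\nu$, and the cross terms it creates are precisely where the interesting Euler factor comes from. The numerology confirms the mismatch: the factor $\alpha_\nu$ inside $\cE_p(\ad F_\nu^\circ)$ cannot be ``the $\alpha_\nu^r=\alpha_\nu$ already extracted'' in Step 2 --- that contribution is the separate prefactor $p^{r-1}\alpha_\nu^r$ in the statement of Theorem \ref{thm:calcul_valeurs_speciales}, so an additional $\alpha_\nu$ must come out of the Petersson comparison itself; likewise your $(p+1)$ should be a $(p-1)$, which in the paper arises from comparing $\Gamma_1$- and $\Gamma_0$-Petersson products at the two different levels $Np$ and $N$ (the index ratio $\varphi(Np)/\varphi(N)=p-1$), and a sign $(-1)^k$ also appears. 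The paper settles all of this by reducing to the $\Gamma_0$-ratio $\langle F_\nu,F_\nu^c|W_{Np^r}\rangle_{\Gamma_0(Np^r)}/\langle F_\nu^\circ,F_\nu^\circ\rangle_{\Gamma_0(Np^{r_0})}$ and invoking Hida's computation \cite[(9.5) p.79]{hidaAIF}, which gives $(-1)^kw_\nu$ in the new case and $(-1)^kw_\nu\,\alpha_\nu(1-\tfrac{\beta_\nu}{\alpha_\nu})(1-p^{-1}\tfrac{\beta_\nu}{\alpha_\nu})$ in the old case. To repair your Step 4 you would need to carry out (or cite) exactly this computation of $W_{Np}$ on the oldspace spanned by $F_\nu^\circ$ and $V_pF_\nu^\circ$, together with the $\Gamma_1/\Gamma_0$ index comparison; the ``standard adjoint of $V_p$'' identity for $\langle F_\nu,F_\nu\rangle$ alone does not yield the stated formula.
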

\begin{proof}
	Let $W=W_{Np^r}$ for simplicity. Let $a$ be the sign of $(-1)^k$. Since $\nu$ factors through $\bI/\omega_{k,r}\bI$, $L_p(\ad F,\nu)$ equals
\[(-1)^{k}\cdot \sum_{d\in U_1/U_r} (dNp^r)^{2-k}\cdot (\xi_{F_\nu}^{+a}|W,\xi_{F_\nu}^{-a}|W|(T_p^*)^r|W|\langle d \rangle^{-1})_\pi \cdot d^{k-2}\varepsilon(d).\]
Noticing that $\xi_{F_\nu}^{-a}|T_p=\alpha_\nu\xi_{F_\nu}^{-a}$ and $\xi_{F_\nu}^{-a}|\langle d\rangle^{-1}=\varepsilon^{-1}(d)\xi_{F_\nu}^{-a}$, we obtain 
\begin{align*}
	L_p(\ad F,\nu)&= (-1)^{k}(Np^r)^{2-k} p^{r-1}\alpha_\nu^r\cdot (\xi_{F_\nu}^{+a}|W,\xi_{F_\nu}^{-a}|W^2)_\pi \\
					&= (-1)^{k} p^{r-1}\alpha_\nu^r\cdot (\xi_{F_\nu}^{+a},\xi_{F_\nu}^{-a}|W)_\pi \\
					&= p^{r-1}\alpha_\nu^r\cdot (\xi_{F_\nu}^{+},\xi_{F_\nu}^{-}|W)_\pi,
\end{align*}
since the adjoint of $W$ for $(\cdot,\cdot)_\pi$ is $Np^r\cdot W^{-1}$ and \eqref{eq:modified_pairing} is anti-symmetric. Hence, 
\[	L_p(\ad F,\nu)= -2(2i)^{k-1} p^{r-1}\alpha_\nu^r\cdot \dfrac{\langle F_\nu,F^c_\nu|W\rangle_{\Gamma_1(Np^r)}}{\Omega^+_{F^\circ_\nu}\Omega^-_{F^\circ_\nu}},\]
by Proposition \ref{prop:calcul_poincare_pairing}. We are reduced to compute $\frac{\langle F_\nu,F^c_\nu|W\rangle_{\Gamma_1(Np^r)}}{\langle F^\circ_\nu,F^\circ_\nu\rangle_{\Gamma_1(Np^{r_0})}}=c\cdot\frac{\langle F_\nu,F^c_\nu|W\rangle_{\Gamma_0(Np^r)}}{\langle F^\circ_\nu,F^\circ_\nu\rangle_{\Gamma_0(Np^{r_0})}}$, where $c=1$ if $F_\nu=F_\nu^\circ$ and $c=p-1$ otherwise. This is already done in \cite[(9.5) p.79]{hidaAIF}, which yields:
\begin{equation*}
	\frac{\langle F_\nu,F^c_\nu|W\rangle_{\Gamma_0(Np^r)}}{\langle F^\circ_\nu,F^\circ_\nu\rangle_{\Gamma_0(Np^{r_0})}}=
	\left\{\begin{array}{ll}
		(-1)^kw_\nu  &\textrm{if $F_\nu=F_\nu^\circ$}\\
	(-1)^kw_\nu \alpha_\nu(1-\frac{\beta_\nu}{\alpha_\nu})(1-p^{-1}\frac{\beta_\nu}{\alpha_\nu})  &\textrm{if $F_\nu\neq F_\nu^\circ$}.
	\end{array}\right.
\end{equation*}
\end{proof}

\subsection{A $p$-adic $L$-function for varying branches}
We keep the notations of the preceding sections and we fix in particular a local component $\bT$ of $e\gh(Np^\infty)$ with residual Galois representation $\ob{\rho}=\ob{\rho}_\gm$. We let as before $\bT^*$ denote the corresponding local component of $e^*\gh^*(Np^\infty)$ under $e\gh(Np^\infty)\simeq e^*\gh^*(Np^\infty)$, and $\bT^*_\bI=\bT^*\otimes_{\Lambda}\bI$. Put also  $\cM=e^*\HH^1(Np^\infty)_{\gm^*}\otimes_{\Lambda}\bI$.

Assuming \eqref{tag:fr}, we discuss in this paragraph the existence of an element $L_p(\ad \ob{\rho})\in\bT_\bI$ such that, for every normalized eigenform $F$ of level $N$ with residual representation $\ob{\rho}$ and satisfying \eqref{tag:pr}, we have $\lambda_F(L_p(\ad \ob{\rho}))=L_p(\ad F)$. We shall in particular explain how to make a consistent choice of canonical periods for every such eigenform $F$.

Following Fukaya and Kato \cite[\S1.6]{fukayakato}, we define a $\bI$-linear pairing on $\cM \times \cM$ by letting
\[((x,y))_\bI=\sum_{n\geq 1} (x,y|T_n^*)_\bI \cdot q^n\in S(N,\bI)_\gm\]
for all $(x,y)\in\cM \times \cM$.
\begin{proposition}\label{prop:pairing_fukaya_kato}
	The pairing $((\cdot,\cdot))_\bI \colon \cM \times \cM \to S(N,\bI)_\gm$ is perfect and it satisfies
	\[((x|T_m^*,y))_\bI=((x,y|T_m^*))_\bI=((x,y))_\bI|T_m\]
	for all $m\geq1$ and $(x,y)\in\cM \times \cM$. 
\end{proposition}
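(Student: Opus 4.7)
The plan is to establish in turn (i) that each $((x,y))_\bI$ is a well-defined element of $S(N,\bI)_\gm$, (ii) the two Hecke-equivariance identities, and (iii) perfectness. The two essential inputs are Ohta's pairing \eqref{eq:ohta_pairing} (extended $\bI$-linearly after flat base change, which is legitimate since $\bI$ is finite flat over $\Lambda$) and Hida's duality, which identifies $S(N,\bI)_\gm$ with $\Hom_\bI(\bT_\bI,\bI)$ via $F\mapsto(T\mapsto a_1(F|T))$, together with the canonical identification $\bT_\bI\simeq\bT^*_\bI$ sending $T_n$ to $T_n^*$.

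For (i), I would observe that the assignment $T_n^*\mapsto(x,y|T_n^*)_\bI$ is the restriction to $\{T_n^*\}_{n\geq 1}$ of the $\bI$-linear composite $\bT^*_\bI\to\cM\to\bI$ given by $T^*\mapsto y|T^*\mapsto(x,y|T^*)_\bI$, where the first arrow is the Hecke action on $y$ and the second is Ohta's pairing against $x$. Under Hida's duality this functional corresponds to a form in $S(N,\bI)_\gm$ whose $q$-expansion is precisely $((x,y))_\bI$, so the latter is well defined.

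For (ii), the identity $((x|T_m^*,y))_\bI=((x,y|T_m^*))_\bI$ follows term by term from the adjointness $(x|T_n^*,y)_\bI=(x,y|T_n^*)_\bI$ of Ohta's pairing and the commutativity of Hecke operators. For $((x,y))_\bI|T_m=((x,y|T_m^*))_\bI$ I would invoke the injectivity of Hida's duality: two $\Lambda$-adic cuspforms coincide iff they induce the same functional on $\bT_\bI$. By (i), the functional attached to $F=((x,y))_\bI$ is $T\mapsto(x,y|T^*)_\bI$, so the functional attached to $F|T_m$ is $T\mapsto a_1(F|T_mT)=(x,y|T_m^*T^*)_\bI$, which is also the functional attached to $((x,y|T_m^*))_\bI$.

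For (iii), I would compose the natural isomorphisms of $\bT^*_\bI$-modules
\[\cM\xrightarrow{\sim}\Hom_\bI(\cM,\bI)\xrightarrow{\sim}\Hom_{\bT^*_\bI}\!\bigl(\cM,\Hom_\bI(\bT^*_\bI,\bI)\bigr)\xrightarrow{\sim}\Hom_{\bT^*_\bI}(\cM,S(N,\bI)_\gm),\]
where the first is the perfection of Ohta's pairing extended to $\bI$, the second is $\bT^*_\bI$-Hom adjunction, and the third is Hida's duality read as an isomorphism of $\bT^*_\bI$-modules. Unwinding the definitions, the composite sends $x$ to $y\mapsto((x,y))_\bI$, which is therefore an isomorphism, yielding perfectness. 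The main subtlety to guard against throughout is the systematic use of the identification $T_n\leftrightarrow T_n^*$ to convert between the $\bT_\bI$-action on $S(N,\bI)_\gm$ and the $\bT^*_\bI$-action on $\cM$; once this bookkeeping is set up, all three parts of the proof are essentially formal.
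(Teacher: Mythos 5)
Your proposal is correct, and it differs from the paper only in that the paper does not argue at all: its proof of Proposition \ref{prop:pairing_fukaya_kato} is simply the citation \cite[Prop.~1.6.6]{fukayakato}, whose pairing is exactly $((\cdot,\cdot))_\bI$. What you have written amounts to a self-contained reproof of that cited result from the two inputs already available in the paper: the perfectness and self-adjointness of Ohta's pairing \eqref{eq:ohta_pairing}, which survives the finite flat base change $\Lambda\to\bI$, and Hida duality $S(N,\bI)_\gm\simeq\Hom_\bI(\bT_\bI,\bI)$, $F\mapsto(T\mapsto a_1(F|T))$, together with the identification $T_n\leftrightarrow T_n^*$. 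Your three steps check out: well-definedness is the identity $a_n(F)=a_1(F|T_n)$ applied to the functional $T\mapsto(x,y|T^*)_\bI$; the Hecke identities follow coefficientwise from the self-adjointness of the $T_n^*$ under Ohta's pairing together with the $\bT_\bI$-equivariance of Hida duality for the module structure $(s\cdot\phi)(t)=\phi(st)$; and your chain $\cM\simeq\Hom_\bI(\cM,\bI)\simeq\Hom_{\bT^*_\bI}\bigl(\cM,\Hom_\bI(\bT^*_\bI,\bI)\bigr)\simeq\Hom_{\bT^*_\bI}(\cM,S(N,\bI)_\gm)$ does compose to $x\mapsto\bigl(y\mapsto((x,y))_\bI\bigr)$, so perfectness holds in the form of an isomorphism of $\bT^*_\bI$-modules $\cM\simeq\Hom_{\bT^*_\bI}(\cM,S(N,\bI)_\gm)$ --- which is precisely the form of perfectness the paper uses right after the proposition, to conclude that $((\xi^+_\gm,\xi^-_\gm))_\bI$ generates $S(N,\bI)_\gm$ over $\bT_\bI$ when $\xi^\pm_\gm$ generate $\cM^\pm$. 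Compared with the bare citation, your argument buys independence from Fukaya--Kato's conventions (their statement is formulated in their own setting and must otherwise be matched against the one here), at no real cost, since the ingredients you use are the same ones underlying their proof.
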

\begin{proof}
	See \cite[Prop. 1.6.6]{fukayakato}.
\end{proof}
Hida duality for $\Lambda$-adic cusp forms \cite[Thm. 1.3]{hidaAIF} implies $S(N,\Lambda)_\gm\simeq \Hom_{\Lambda}(\bT,\Lambda)$. Hence, under the assumption that $\bT$ is Gorenstein, we see that $S(N,\bI)_\gm$ is $\bT_\bI$-free of rank $1$. This Gorensteinness condition is implied by \eqref{tag:fr}. Indeed, Ohta's pairing provides an isomorphism $\cM^+\simeq\Hom_\bI(\cM^-,\bI)$, hence $\bT_\bI\simeq\Hom_\bI(\bT_\bI,\bI)$. Therefore, the perfectness of $((\cdot,\cdot))_\bI$ shows that  $((\xi^+_\gm,\xi^-_\gm))_\bI$ generates $S(N,\bI)_\gm$ as a $\bT_\bI$-module if $\xi_\gm^\pm$ generates $\cM^\pm$ as a $\bT_\bI^*$-module.
\begin{definition}\label{def:formula_adjoint_rho_bar}
	Assume \eqref{tag:fr} and let $\xi_\gm^\pm$ be a generator of $\cM^\pm$ as a $\bT_\bI^*$-module. Put $G_\gm:=((\xi^+_\gm,\xi^-_\gm))_\bI$. We define the adjoint $p$-adic $L$-function of $\ob{\rho}$ as the unique element $L_p(\ad \ob{\rho})\in\bT_\bI$ such that
	\begin{equation}\label{eq:formula_adjoint_rho_bar}
		\sum F=L_p(\ad \ob{\rho})\cdot G_\gm,
	\end{equation}
	where the sum runs over the set of all normalized eigenforms with ordinary residual representation isomorphic to $\ob{\rho}$. 
\end{definition}
\begin{theorem}\label{thm:specialization_Hida_eigenforms}
	Assume \eqref{tag:fr} and let $\xi_\gm^\pm$ be a generator of $\cM^\pm$ as a $\bT_\bI^*$-module. For any normalized eigenform $F$ satisfying \eqref{tag:pr}, we let $\xi_F^\pm= e_F\cdot (L_p(\ad \ob{\rho})\xi^\pm_\gm)$, where $e_F$ is the projector associated with $F$. Then $\xi_F^\pm$ is a generator of $\cM^\pm_{\lambda^*_F}$, and 
	\[\lambda_F(L_p(\ad \ob{\rho}))=L_p(\ad F), \]
	where $L_p(\ad F)$ is computed with respect to $\xi^\pm_F$.
\end{theorem}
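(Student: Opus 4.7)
The plan is to combine the defining relation $\sum_{F'} F' = L_p(\ad\ob{\rho}) \cdot G_\gm$ of Definition \ref{def:formula_adjoint_rho_bar} with the $\bT_\bI^*$-bilinearity of the Fukaya-Kato pairing $((\cdot,\cdot))_\bI$ from Proposition \ref{prop:pairing_fukaya_kato} and the fact that $F$ is a Hecke eigenvector in $S(N,\bI)_\gm$ with eigensystem $\lambda_F$.

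First, I would apply the idempotent $e_F\in\bT_\cK^*$ (viewed as an idempotent of $\bT_\cK$ via $\bT\simeq\bT^*$) to the defining relation. Since $e_F$ fixes $F$ and annihilates every other normalized eigenform $F'\neq F$ appearing in the sum, this yields $F = L_p(\ad\ob{\rho}) \cdot e_F \cdot G_\gm$. Using the $\bT^*$-linearity of $((\cdot,\cdot))_\bI$ in the first entry and the definition $\xi_F^+ = e_F\cdot(L_p(\ad\ob{\rho})\xi_\gm^+)$, this rewrites as the key identity
\[F = ((\xi_F^+,\xi_\gm^-))_\bI,\]
valid a priori only in $S(N,\cK)_\gm$. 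The perfectness in Proposition \ref{prop:pairing_fukaya_kato} turns $((\cdot,\xi_\gm^-))_\bI\colon\cM^+\to S(N,\bI)_\gm$ into a $\bT_\bI$-linear isomorphism (since $\xi_\gm^-$ is a free $\bT_\bI^*$-generator of $\cM^-$), so the integrality $F\in S(N,\bI)_\gm$ forces $\xi_F^+$ to live in $\cM^+$; the symmetric argument gives $\xi_F^-\in\cM^-$. This integrality step is the subtlest point of the proof, because $e_F$ itself carries denominators in $\cK$; the pairing trick converts the integrality of $\xi_F^\pm$ into the obvious integrality of $F$.

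To identify $\xi_F^+$ as a generator of $\cM^+_{\lambda_F^*}$, I would transport the question across the isomorphism $((\cdot,\xi_\gm^-))_\bI$: the submodule $\cM^+_{\lambda_F^*} = \cM^+\cap e_F(\cM^+\otimes_\bI\cK)$ maps onto $S(N,\bI)_\gm\cap \cK\cdot F$, which equals $\bI\cdot F$ because the $\lambda_F$-eigenspace in $S(N,\cK)_\gm$ is one-dimensional and $F$ has leading Fourier coefficient equal to $1$. Since $\xi_F^+$ maps to $F$, it generates $\cM^+_{\lambda_F^*}$; similarly $\xi_F^-$ generates $\cM^-_{\lambda_F^*}$.

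For the interpolation formula, I would compute $((\xi_F^+,\xi_F^-))_\bI$ in $S(N,\cK)_\gm$ by applying bilinearity twice and using $e_F^2=e_F$, obtaining
\[((\xi_F^+,\xi_F^-))_\bI = L_p(\ad\ob{\rho})^2\cdot e_F G_\gm = L_p(\ad\ob{\rho})\cdot F = \lambda_F(L_p(\ad\ob{\rho}))\cdot F,\]
where the second equality is the key identity $L_p(\ad\ob{\rho})\cdot e_F G_\gm = F$, and the third follows since $F$ is a $\bT_\bI$-eigenvector. Recalling that $(\cdot,\cdot)_\bI$ is the $q^1$-coefficient of $((\cdot,\cdot))_\bI$ (as $T_1^*=\id$) and that $a_1(F)=1$, I would finally read off
\[L_p(\ad F) = (\xi_F^+,\xi_F^-)_\bI = \lambda_F(L_p(\ad\ob{\rho})),\]
completing the proof.
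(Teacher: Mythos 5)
Your proposal is correct and follows essentially the same route as the paper's proof: both hinge on the identity $F=\lambda_F(L_p(\ad\ob{\rho}))\,e_F\cdot G_\gm$ obtained by applying $e_F$ to the defining relation \eqref{eq:formula_adjoint_rho_bar}, then exploit the Hecke-bilinearity of $((\cdot,\cdot))_\bI$ together with $a_1(F)=1$ (equivalently, comparing coefficients of $F$) to get $\lambda_F(L_p(\ad\ob{\rho}))=(\xi_F^+,\xi_F^-)_\bI$. The only minor variation is in the generator claim: you transport $\cM^\pm_{\lambda_F^*}$ through the isomorphism $((\cdot\,,\xi_\gm^\mp))_\bI\colon\cM^\pm\to S(N,\bI)_\gm$ onto $\bI\cdot F$, whereas the paper compares the congruence module $C_0(\lambda_F)\simeq\bI/(L_F)$ computed in $S(N,\bI)_\gm$ with $(\cM^\pm)^{\lambda_F^*}/\cM^\pm_{\lambda_F^*}$ — both versions rest on the same freeness facts ($\cM^\pm$ free over $\bT^*_\bI$ and $S(N,\bI)_\gm$ free with generator $G_\gm$), so the arguments are interchangeable.
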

\begin{proof}
	Assume $F(q)=\sum_{n\geq 1}a_nq^n$ satisfies \eqref{tag:pr}. Let $e_F,e'_F\in\bT_\cK=\bT_\bI\otimes_\bI\cK$ be the projectors of the algebra decomposition $\bT_\cK\simeq\cK\times X$, where $\cK=\Frac(\bI)$. We put $L_F=\lambda_F(L_p(\ad \ob{\rho}))\in\bI$ for simplicity. Hence, $\xi_F^\pm= L_F e_F\cdot \xi^\pm_\gm$. By multiplicity one, $e_FS(N,\cK)_\gm=\cK.F$,  so
		\begin{equation}\label{eq:relation_F_G_m}
			F= e_F\cdot L_p(\ad \ob{\rho})G_\gm=L_F e_F\cdot G_\gm. 
		\end{equation}
		Since $S(N,\bI)_\gm$ is $\bT_\bI$-free and generated by $G_\gm$, we thus have 
		\[C_0(\lambda_F)\simeq (S(N,\bI)_\gm)^{\lambda_F}/(S(N,\bI)_\gm)_{\lambda_F}=(\bI.e_F\cdot G_\gm)/(\bI.F)=\bI/(L_F). \]
		As $C_0(\lambda_F)\simeq (\cM^\pm)^{\lambda_F^*}/\cM^\pm_{\lambda_F^*}$, we deduce that $\xi^\pm_F$ generates $\cM^\pm_{\lambda_F^*}$ since $e_F\cdot\xi^\pm_\gm$ generates $(\cM^\pm)^{\lambda_F^*}$.
		
		We now prove that $L_p(\ad F):=(\xi^+_F,\xi^-_F)_\bI$ equals $L_F$. Denote by a subscript $\cK$ the extension of scalars to $\cK$ of the pairings $(\cdot,\cdot)_\bI$ and $((\cdot,\cdot))_\bI$. Using $e_F^2=e_F$ and Proposition \ref{prop:pairing_fukaya_kato}, we see that 
		\begin{align*}
			e_F\cdot G_\gm &= ((e_F\cdot\xi_\gm^+,e_F\cdot\xi_\gm^-))_\cK \\
			&=\sum_{n\geq 1}(e_F\cdot \xi_\gm^+,e_F\cdot \xi_\gm^-|T_n^*)_\cK\cdot q^n \\
			&=\sum_{n\geq 1}(e_F\cdot \xi_\gm^+,e_F\cdot \xi_\gm^-)_\cK\cdot a_n q^n=(e_F\cdot \xi_\gm^+,e_F\cdot \xi_\gm^-)_\cK\cdot F.
		\end{align*}
		Hence, multiplying the last equality by $L_F^2$ and combining it with \eqref{eq:relation_F_G_m}, one obtains
		\[L_F\cdot F=(\xi^+_F,\xi^-_F)_\cK\cdot F = L_p(\ad F)\cdot F,\]
		so $L_F= L_p(\ad F)$ as claimed.
\end{proof}

\section{The main conjecture}\label{sec:section_3_main_conjecture}
\subsection{Twists and change of tame level}
Fix a large enough integrally closed finite flat extension $\bI$ of $\Lambda$ as in \S\ref{sec:interpolation_petersson}. Let $F\in S(N,\bI)$ be a primitive Hida family of tame character $\theta$ whose residual Galois representation satisfies \eqref{tag:cr}. Then there exists a continuous Galois representation $\rho_F \colon G_{\bQ} \to \GL_2(\bI)$ unramified outside $Np$ such that 
\[\forall \ell 	\nmid Np,\quad \left\{\begin{array}{lcl}\tr\rho_F(\sigma_\ell)&=&a_\ell(F) \\
	\det\rho_F(\sigma_\ell)&=&\ell\cdot \theta_\bI(\ell),
\end{array}\right.\]
where $\sigma_\ell$ is the arithmetic Frobenius at $\ell$ and $\theta_\bI \colon (\bZ/Np)^\times \to \bI^\times$ is such that $\nu(\theta_\bI(\ell))=(\theta\omega^{2-k}\varepsilon)(\ell)\ell^{k-2}$ for any $\nu\in\cX(\bI)$ of weight $(k,\varepsilon)$. In particular, the specialization $\rho_{\nu}$ of $\rho_F$ at any $\nu\in\cX(\bI)$ is isomorphic to the usual Eichler-Shimura-Deligne representation attached to $F_\nu$.

Given a character $\psi$ of prime-to-$p$ conductor, there exists a unique primitive Hida family $F_\psi$ underlying $F \otimes \psi$. In particular, $F_\psi$ has tame character $\theta\psi^2$ and if $\psi$ is unramified at a prime $\ell\nmid N$, then $a_\ell(F_\psi)=\psi(\ell)a_\ell(F)$. We say that $F$ is twist-minimal if $F$ has minimal level among its twists. 

Let $\pi=\otimes'_\ell \pi_\ell$ be the automorphic representation of $\GL_2(\bA)$ generated by $F^\circ_\nu$ for some arithmetic weight $\nu\in\cX(\bI)$. By \cite[\S6.1]{dimitrov2014local}, the automorphic type of $\pi_\ell$ at a certain prime $\ell$ does not depend on the choice of $\nu$. For any prime $\ell$, we then define
\small 
\begin{equation}\label{eq:euler_factor}
	E_\ell(\ad F)=\left\{\begin{array}{ll}
	 (\ell-1)(a_\ell(F)^2-\theta_\bI(\ell)(1+\ell)^2)& \textrm{if $\ell\nmid N$} \\
	 1-\ell^{-1}& \textrm{if $\pi_\ell$ is a ramified principal series} \\
	 1-\ell^{-2}& \textrm{if $\pi_\ell$ is unramified special} \\
	 1& \textrm{if $\pi_\ell$ is supercuspidal}. \\
\end{array}\right.
\end{equation}\normalsize

Let $N_0$ be the tame conductor of the residual representation $\ob{\rho}$ of $F$ and let $\Sigma$ be a finite set of primes containing $\{\ell \ \colon\ \ell|(N/N_0)\}$ but not $p$. Denote by $\ob{V}$ (resp. $V_\nu$) the underlying space of $\ob{\rho}$ (resp. of $\rho_\nu$ over $\Frac(\cO_\nu)$), by $\ob{V}_{I_\ell}$ the co-invariants of $\ob{V}$ under the action of the inertia group $I_\ell$ at $\ell$ and put $m_\ell=\dim \ob{V}_{I_\ell}$. Then, the invariance of the Swan conductor under reduction (\cite[\S1]{livne} implies
\begin{equation}\label{eq:swan}
	\cond_\ell(\rho_{\nu})-\cond_\ell(\ob{\rho})=\dim \ob{V}^{I_\ell}-\dim (V_{\nu})^{I_\ell}
\end{equation}
for every $\nu\in\cX(\bI)$ and every prime $\ell\neq p$, where $\cond_\ell$ refers to the local conductor at $\ell$ of Galois representations.
 In particular, $N$ is a divisor of 
\begin{equation}\label{eq:definition_N_Sigma}
	N_\Sigma=N_0\prod_{\ell\in\Sigma} \ell^{m_\ell}.
\end{equation}
\begin{lemma}\label{lem:partition_bad_primes_N_Sigma}
	Let $\Sigma_{sc}$ be the set of supercuspidal primes of $F$ in $\Sigma$.
	 \begin{enumerate}
	 	\item If $F$ is twist-minimal, then $\Sigma=\{\ell \; \colon \; \ell|(N_\Sigma/N)\} \cup \Sigma_{sc}$.
	 	\item Assume $F=F'_\psi$ for some Dirichlet character $\psi$ unramified outside $\Sigma$ and $F'$ a twist-minimal newform. Then  
	 	\[\{\ell \colon \ell|(N_\Sigma/N)\} \cup \Sigma_{sc}=\{\ell\in\Sigma \colon \psi_{(\ell)}=1 \mbox{ or } \psi_{(\ell)}=\theta_{(\ell)} \}\cup \Sigma_{sc}, \]
	 	where the index $(\ell)$ stands for the $\ell$-primary component of a Dirichlet character. Moreover, for any $\ell|(N_\Sigma/N)$, $E_\ell(\ad F)$ and $E_\ell(\ad F')$ are equal up to a unit in $\bI$.
	 \end{enumerate}
\end{lemma}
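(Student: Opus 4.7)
The plan is to reformulate both parts in terms of the local invariant dimensions $\dim V_\nu^{I_\ell}$, and then proceed by a case-by-case analysis on the automorphic type of $\pi_\ell$.

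I would first establish the identity
\[\ord_\ell(N_\Sigma/N)=\dim V_\nu^{I_\ell}\qquad\mbox{for all }\ell\in\Sigma.\]
Indeed, the definition of $N_\Sigma$ gives $\ord_\ell(N_\Sigma)=\ord_\ell(N_0)+m_\ell$, which combined with \eqref{eq:swan} yields $\ord_\ell(N_\Sigma/N)=m_\ell-\dim\ob{V}^{I_\ell}+\dim V_\nu^{I_\ell}$. The key point is that $m_\ell=\dim\ob{V}^{I_\ell}$: since $\ell\neq p$, wild inertia $P_\ell$ acts on $\ob{V}$ through a finite $\ell$-group (hence semisimply, as the order is prime to $p$), so $\ob{V}^{P_\ell}=\ob{V}_{P_\ell}$; the tame quotient $I_\ell/P_\ell$ is pro-cyclic, so acts on $\ob{V}^{P_\ell}$ through a single linear operator, and rank-nullity gives equality of invariants and coinvariants. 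The inclusion $\{\ell:\ell\mid(N_\Sigma/N)\}\subseteq\Sigma$ is then automatic, since $\ell\notin\Sigma$ forces $\ord_\ell(N_\Sigma)=\ord_\ell(N_0)\leq\ord_\ell(N)$.

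For part (1), I would next classify the possibilities for $\pi_\ell$ at $\ell\in\Sigma\setminus\Sigma_{sc}$ under twist-minimality. Minimizing the local conductor over all twists by characters of $\bQ_\ell^\times$ shows that $\pi_\ell$ is necessarily of one of three types: unramified principal series (giving $\dim V_\nu^{I_\ell}=2$), unramified special (giving $\dim V_\nu^{I_\ell}=1$), or a ramified principal series of the form $\mathrm{PS}(\eta,\eta\chi)$ with $\eta$ unramified and $\chi$ ramified (giving $\dim V_\nu^{I_\ell}=1$); ramified special and ramified principal series with both characters ramified are never twist-minimal. In each case $\dim V_\nu^{I_\ell}\geq 1$, so $\ell\mid(N_\Sigma/N)$ by the identity above.

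Part (2) is then a bookkeeping exercise using the identity $\pi_\ell=\pi'_\ell\otimes\psi_{(\ell)}$ and the relation $\theta_{(\ell)}=\theta'_{(\ell)}\cdot\psi_{(\ell)}^2$ coming from $\theta=\theta'\psi^2$. For the two unramified types of $\pi'_\ell$ we have $\theta'_{(\ell)}=1$, so $\dim V_\nu^{I_\ell}\geq 1$ is equivalent to $\psi_{(\ell)}=1$, which in turn is equivalent to $\psi_{(\ell)}\in\{1,\theta_{(\ell)}\}$ since $\theta_{(\ell)}=\psi_{(\ell)}^2$. For the ramified type $\pi'_\ell=\mathrm{PS}(\eta,\eta\chi)$, the twist $\pi_\ell=\mathrm{PS}(\eta\psi,\eta\chi\psi)$ has nonzero $I_\ell$-invariants precisely when $\psi$ or $\chi\psi$ is unramified at $\ell$, i.e., when $\psi_{(\ell)}=1$ or $\psi_{(\ell)}=\chi_{(\ell)}^{-1}$, and the latter coincides with $\psi_{(\ell)}=\theta_{(\ell)}$. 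For the Euler factor comparison, when $\pi'_\ell$ is unramified the identities $a_\ell(F)=\psi(\ell)a_\ell(F')$ and $\theta_\bI(\ell)=\psi(\ell)^2\theta'_\bI(\ell)$ yield $E_\ell(\ad F)=\psi(\ell)^2 E_\ell(\ad F')$ with $\psi(\ell)\in\bI^\times$; otherwise $\pi_\ell$ and $\pi'_\ell$ have the same automorphic type as listed in \eqref{eq:euler_factor} (the subcase $\psi_{(\ell)}=\theta_{(\ell)}$ of the ramified PS type merely swaps the roles of the two characters), so the corresponding Euler factors coincide on the nose. The main obstacle is getting the twist-minimal local classification and the central-character bookkeeping correct; once those are in hand, everything reduces to case-by-case computation.
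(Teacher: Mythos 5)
Your argument is correct and follows essentially the same route as the paper: the Swan-conductor identity \eqref{eq:swan} together with the equality of mod-$p$ inertia invariants and coinvariants (your identity $\ord_\ell(N_\Sigma/N)=\dim V_\nu^{I_\ell}$), then the explicit description of the local types at twist-minimal primes (which the paper quotes from Dimitrov) and the central-character bookkeeping, with the same Euler-factor comparison including the unit $\psi(\ell)^2$ when $\ell\nmid N'$. The only presentational difference is that you prove (1) directly from the local classification, whereas the paper deduces it from (2) by taking $\psi=1$.
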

\begin{proof}
	Note that (1) follows from (2). Let $N'$ and $\theta'$ be the exact level and the character of $F'$ respectively. For $\ell\in\Sigma\setminus S_{sc}$,  \eqref{eq:swan} and the explicit description of the local Galois representation attached to $F'$ at $\ell$ (\cite[Lem. 6.3]{dimitrov2014local}) shows that $\ell$ divides $N_\Sigma/N$ if and only if $N$ and $N'$ share the same $\ell$-adic valuation. This condition is clearly equivalent to $\psi_{(\ell)}=1$ or $\psi_{(\ell)}=(\theta'_{(\ell)})^{-1}=\theta_{(\ell)}$. 
	
	Regarding the last claim, if $\ell|(N_\Sigma/N)$, then $E_\ell(\ad F')=E_\ell(\ad F)$ unless $\ell\nmid N'$, in which case we have $E_\ell(\ad F')=\psi(\ell)^2E_\ell(\ad F)$.
\end{proof}

\begin{theorem}\label{thm:wiles_Sigma}
	Assume $\ob{\rho}$ satisfies \eqref{tag:cr}. Let $\gh'(N_\Sigma p^\infty)$ be the $\Lambda$-subalgebra of $\gh(N_\Sigma p^\infty)$ generated by the Hecke operators $T_n$ for all $n\geq 1$ coprime to $N_\Sigma p$.
	\begin{enumerate}
		\item There exists a unique local component $\bT'_\Sigma=e\gh'(N_\Sigma p^\infty)_{\gm'_\Sigma}$ of $e\gh'(N_\Sigma p^\infty)$ such that there exists an isomorphism $\ob{\rho}\simeq\ob{\rho}_{\gm'_\Sigma}$ respecting the ordinary filtrations.
		\item There exists a unique maximal ideal $\gm_\Sigma\subset e\gh(N_\Sigma p^\infty)$ lifting $\gm'_\Sigma$ and such that $\bT'_\Sigma\simeq\bT_\Sigma$ for $\bT_\Sigma=e\gh(N_\Sigma p^\infty)_{\gm_\Sigma}$. Moreover, $T_\ell\in\gm_\Sigma$ for all $\ell\in\Sigma$ and the image of $T_\ell$ in the localization $\bT_\Sigma$ vanishes. 
	\end{enumerate}
\end{theorem}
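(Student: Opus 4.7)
The plan is to construct $\gm'_\Sigma$ and $\gm_\Sigma$ explicitly from residual Galois data, then to identify the corresponding localizations via the freeness input of Proposition \ref{prop:EPW}.

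For part (1), recall that $e\gh'(N_\Sigma p^\infty)$ is a semi-local $\Lambda$-algebra, module-finite over $\Lambda$, and that each of its maximal ideals $\gn$ corresponds bijectively to a system $(\lambda(T_n),\lambda(\langle d\rangle))_{(n,N_\Sigma p)=1}$ of residual Hecke eigenvalues; by Deligne's construction together with Chebotarev density, this system determines a semi-simple residual Galois representation $\ob{\rho}_\gn$. For existence, modularity of $\ob{\rho}$ --- which by hypothesis arises from a primitive Hida family of tame level $N\mid N_\Sigma$ --- pulled back through the degeneracy/trace maps into level $N_\Sigma p^\infty$ yields a maximal ideal whose residual representation is $\ob{\rho}$ with the prescribed ordinary filtration. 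For uniqueness, \eqref{tag:cr} is essential: absolute irreducibility forces $\ob{\rho}_\gn$ to be determined by its traces, and $p$-distinguishedness pins down the residual $U_p$-eigenvalue as the Frobenius eigenvalue on the unramified quotient of $\ob{\rho}|_{G_{\Qp}}$ once the ordinary filtration is fixed.

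For part (2), existence of some $\gm_\Sigma$ lying over $\gm'_\Sigma$ is immediate from going-up applied to the integral extension $e\gh'(N_\Sigma p^\infty)\hookrightarrow e\gh(N_\Sigma p^\infty)$. To single out the $\gm_\Sigma$ additionally containing $\{T_\ell\mid\ell\in\Sigma\}$, I would exhibit a Hida family of exact tame level $N_\Sigma$ lifting $\ob{\rho}$, obtained by level-raising and by using the explicit local automorphic classification from \cite[Lem.~6.3]{dimitrov2014local}: its associated maximal ideal witnesses the properness of $\gm'_\Sigma + (T_\ell)_{\ell\in\Sigma}$, and the uniqueness of $\gm_\Sigma$ over $\gm'_\Sigma$ is then forced by specifying the residual $U_\ell$-values. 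The isomorphism $\bT'_\Sigma\simeq\bT_\Sigma$ reduces to surjectivity of the natural inclusion, which follows from the rank-one freeness of $e\HH^1(N_\Sigma p^\infty)^\pm_{\gm_\Sigma}$ over $\bT_\Sigma$ (Proposition \ref{prop:EPW} applied at tame level $N_\Sigma$): the resulting Gorensteinness of $\bT_\Sigma$, combined with Carayol-Nyssen-style results on pseudo-characters with absolutely irreducible residual trace, forces $\bT_\Sigma$ to be generated as a $\Lambda$-algebra by the Frobenius traces $T_n$ for $(n,N_\Sigma p)=1$, hence to coincide with $\bT'_\Sigma$.

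The hardest step is the identical vanishing $T_\ell = 0$ in $\bT_\Sigma$ for $\ell\in\Sigma$. By Zariski-density of arithmetic points in $\Spec\bT_\Sigma$, it suffices to prove $a_\ell(F)=0$ for every Hida family $F$ contributing to $\bT_\Sigma$. The residual constraint $a_\ell(F)\equiv 0\pmod{\gm_\Sigma}$ together with the conductor bound $v_\ell(N_F)\leq m_\ell$ rules out the unramified case (where the residual value would be $\tr\ob{\rho}(\sigma_\ell)$) and the Steinberg-with-unramified-twist case (where $U_\ell$ would be a residual unit); in the remaining local automorphic types --- ramified principal series with both characters ramified, or supercuspidal --- the $U_\ell$-eigenvalue vanishes identically by local Langlands. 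The truly delicate point is to exclude contributions from old Hida families of smaller $\ell$-conductor, and this is exactly where the main technical result of \cite{emerton2006variation} (the level-raising/Ihara-type input already alluded to in the introduction) plays the crucial role, by controlling the comparison between cohomology at different tame levels so that $\bT_\Sigma$ receives only Hida families that are new at every $\ell\in\Sigma$.
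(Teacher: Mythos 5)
Your proposal is necessarily different in character from the paper's proof, which is a one-line citation of \cite[Thm.~2.4.1 and Prop.~2.4.2]{emerton2006variation}; judged as a self-contained argument, it has two genuine gaps at the central points. First, the identification $\bT'_\Sigma\simeq\bT_\Sigma$ is not obtained the way you claim: Gorensteinness of $\bT_\Sigma$ together with Carayol--Nyssen type results \cite{nyssen1996pseudo} produces a Galois representation over the trace algebra, but says nothing about the operators $T_q$ for $q\mid N_\Sigma p$ lying in the anemic subalgebra, and ``generation by Frobenius traces'' is simply false at a general non-minimal level (already at level $N\ell$ with $\ell\nmid N$ and the two residual Frobenius eigenvalues at $\ell$ congruent mod $\gm$, the full localization is strictly larger than the anemic one). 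Any correct argument must use the precise exponents $m_\ell$ in \eqref{eq:definition_N_Sigma}: one proves $T_\ell=0$ in $\bT_\Sigma$ for $\ell\in\Sigma$, puts $T_p$ in the anemic algebra via $p$-distinguishedness and Hensel's lemma, and handles $q\mid N_0$ by local-global compatibility at the minimal level; the equality $\bT'_\Sigma=\bT_\Sigma$ is a \emph{consequence} of this analysis, not something extractable from the freeness statement of Proposition \ref{prop:EPW}.

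Second, your argument for the vanishing of $T_\ell$ only yields nilpotence: checking $a_\ell(F)=0$ for every eigenform in the support controls eigenvalues, and passing from ``all eigenvalues are $0$'' to ``$T_\ell=0$'' requires reducedness of $\bT_\Sigma$, which is not available at this stage --- the paper deduces reducedness \emph{from} part (2). The actual mechanism is finer: for each newform $f$ in the component, the conductor formula \eqref{eq:swan} (with $\dim\ob{V}^{I_\ell}=\dim\ob{V}_{I_\ell}$, as wild inertia has pro-order prime to $p$) gives $v_\ell(N_\Sigma)-\cond_\ell(\rho_f)=\dim V_f^{I_\ell}$, where $V_f$ is the Deligne representation of $f$, and the explicit companion-matrix action of $T_\ell$ on the span of the $f(q^{\ell^i})$ then shows that the part surviving localization at $\gm_\Sigma$ is an honest kernel of $T_\ell$ (with any larger exponent one would get a nonzero nilpotent, and with a smaller one the statement would fail differently). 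This numerical coincidence is the heart of \cite[Prop.~2.4.2]{emerton2006variation} and is absent from your sketch. Relatedly, your closing claim that $\bT_\Sigma$ ``receives only Hida families that are new at every $\ell\in\Sigma$'' is false: $F_\Sigma$ itself is $\ell$-old for $\ell\mid (N_\Sigma/N)$, and its contribution to $\bT_\Sigma$ is exactly what Theorem \ref{thm:main_2} exploits; the level-comparison result \cite[Thm.~3.6.2]{emerton2006variation} enters this paper in the proof of Proposition \ref{prop:change_of_level_p_adic_L}, not to exclude oldforms here.
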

\begin{proof}
	This is the content of \cite[Thm. 2.4.1 and Prop. 2.4.2]{emerton2006variation}.
\end{proof}
Denote by $F_\Sigma=\sum_na_n(F_\Sigma)$ the normalized eigenform of $S(N_\Sigma,\bI)$ sharing the same Hecke eigenvalues as $F$ at primes not dividing $N_\Sigma/N$. We have $a_n(F_\Sigma)=0$ for $n$ divisible by a prime in $\Sigma$ and $a_n(F_\Sigma)=a_n(F)$ otherwise. Note that by Theorem \ref{thm:wiles_Sigma} (2), $\bT_\Sigma$ is reduced, so $F_\Sigma$ satisfies \eqref{tag:pr} and it makes sense to consider its adjoint $p$-adic $L$-function.

\begin{proposition}\label{prop:change_of_level_p_adic_L}
	Fix a choice of canonical periods $\xi^\pm_F$ for $F$ as in Definition \ref{def:p_adic_adjoint_L_function}. There is a choice of periods $\xi^\pm_{F_\Sigma}$ for $F_\Sigma$ such that 
	\[L_p(\ad F_\Sigma)=L_p(\ad F)\prod_{\ell|(N_\Sigma/N)}E_\ell(\ad F),\]
	where $L_p(\ad F_\Sigma)$ (resp. $L_p(\ad F)$) is computed with  respect to $\xi^\pm_{F_\Sigma}$ (resp. to $\xi^\pm_{F}$).
\end{proposition}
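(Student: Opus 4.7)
The plan is to reduce to a single prime $\ell$ dividing $N_\Sigma/N$ and then compare Ohta's pairing at levels $Np^r$ and $N_\Sigma p^r$ through degeneracy maps between the corresponding towers of modular curves. Concretely, I would write $N_\Sigma/N=\prod_\ell \ell^{e_\ell}$ and proceed by induction on the number of primes $\ell$ with $e_\ell>0$. Each inductive step is self-contained, because the Euler factor $E_\ell(\ad F)$ depends only on the local automorphic type of $\pi_\ell$ (read off from $\ob{\rho}$ and the tame conductor $N_0$), which is stable under level-raising at primes $\ell'\neq\ell$. This reduces the proof to the case in which $N_\Sigma/N$ is a single prime power $\ell^{e_\ell}$.

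In this reduced situation, the degeneracy maps $\pi_i\colon X_1(N_\Sigma p^r)\to X_1(Np^r)$ induce $\Lambda$-linear maps between the Ohta cohomology towers. After localizing at the appropriate maximal ideals --- justified by Theorem \ref{thm:wiles_Sigma} together with Proposition \ref{prop:EPW} --- one obtains a precise relation between $e^*\HH^1(Np^\infty)_{\gm^*}$ and $e^*\HH^1(N_\Sigma p^\infty)_{\gm_\Sigma^*}$: the $F_\Sigma$-isotypic component at level $N_\Sigma$ is cut out by an explicit projector acting on the direct sum of $e_\ell+1$ degeneracy pullbacks of the $F$-isotypic component at level $N$. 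I take $\xi_{F_\Sigma}^\pm$ to be the image of $\xi_F^\pm$ under this projector; any other admissible canonical period at level $N_\Sigma$ differs from this by an $\bI$-unit, which is consistent with the wording ``there is a choice'' in the statement.

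Plugging this choice into the interpolation formula \eqref{eq:ota_pairing_interpolation} for $L_p(\ad F_\Sigma)=(\xi_{F_\Sigma}^+,\xi_{F_\Sigma}^-)_\bI$, I would then use the adjointness of pullback and pushforward under the degeneracy maps for the Poincaré pairing (which contributes the index $[\Gamma_1(Np^r):\Gamma_1(N_\Sigma p^r)]$) together with the intertwining between the Atkin-Lehner involutions $W_{Np^r}$ and $W_{N_\Sigma p^r}$ via the $\pi_i$'s. The computation then reduces to a purely local one at $\ell$. Depending on the automorphic type of $\pi_\ell$, the local factor produces precisely the Euler factor $E_\ell(\ad F)$ of \eqref{eq:euler_factor}: in the unramified case $\ell\nmid N$, the factor $(\ell-1)(a_\ell(F)^2-\theta_\bI(\ell)(1+\ell)^2)$ emerges from the $U_\ell$-eigenspace decomposition on the span of the pullbacks $\pi_i^*\xi_F^\pm$, while in the ramified cases one uses the explicit local description of $\pi_\ell$ from \cite{dimitrov2014local}.

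The main obstacle is this last local computation: carefully tracking all the normalization constants in \eqref{eq:ota_pairing_interpolation} (notably the factor $(Np^r)^{2-k}$ and the twist by $W_{N_\Sigma p^r}$) under the degeneracy maps, and checking that the resulting local constant matches $E_\ell(\ad F)$ up to an $\bI$-unit in each of the four cases of \eqref{eq:euler_factor}. A potentially cleaner alternative would be to work throughout with Fukaya-Kato's pairing from Proposition \ref{prop:pairing_fukaya_kato}, whose values lie in the space of $\Lambda$-adic cusp forms where the effect of degeneracy maps is classical; the Euler factor should then emerge directly from expressing $F_\Sigma$ as an explicit $\bI$-linear combination of the forms $F(q^{\ell^j})$ for $0\leq j \leq e_\ell$.
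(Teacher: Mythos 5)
Your overall strategy (transport classes between the towers at levels $N$ and $N_\Sigma$, compute the pairing by adjunction, and reduce to a local computation at $\ell$ producing $E_\ell(\ad F)$) is in the same spirit as the paper's proof, but there is a genuine gap at the single most delicate point: the integrality of the transported class. You propose to define $\xi^\pm_{F_\Sigma}$ as the image of $\xi^\pm_F$ under "an explicit projector" onto the $F_\Sigma$-isotypic component and then assert that "any other admissible canonical period at level $N_\Sigma$ differs from this by an $\bI$-unit." That assertion is exactly what must be proved, and nothing in your outline addresses it. The quantity $L_p(\ad F_\Sigma)$ is defined via an $\bI$-\emph{basis} of the lattice $(\cM^\pm_\Sigma)_{\lambda^*_{F_\Sigma}}=\cM^\pm_\Sigma\cap e_{F_\Sigma}(\cM^\pm_\Sigma\otimes\cK)$; a class obtained by pushing $\xi^\pm_F$ through degeneracy maps certainly lies in this lattice (after clearing denominators), but a priori it could generate a proper $\bI$-submodule, in which case your computed value would differ from $L_p(\ad F_\Sigma)$ by an unknown non-unit (morally, a congruence number between old and new parts at level $N_\Sigma$), and the stated identity would fail. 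Moreover, isotypic projectors such as $e_{F_\Sigma}$ only exist over the fraction field $\cK$ and do not preserve the integral cohomology, so "cutting out by a projector" cannot by itself produce a canonical period. This is precisely where the paper's proof invokes the main technical result of Emerton--Pollack--Weston (\cite[Thm. 3.6.2]{emerton2006variation}): it works with the integral level-raising operator $\varepsilon$ (their $\varepsilon_\infty$, built from $[\Gamma'\Gamma]$ and $V_\ell^*T_\ell^*$ terms) and its adjoint $\varepsilon^{\mathbf{t}}$ for Ohta's pairing, shows $\varepsilon$ is injective on $(\cM^\pm_\Sigma)_{\lambda^*_{F_\Sigma}}$, and deduces by perfectness of the pairing that $\varepsilon^{\mathbf{t}}(\cM^\pm_{\lambda^*_F})=(\cM^\pm_\Sigma)_{\lambda^*_{F_\Sigma}}$ exactly, with no index. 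Without this (or an equivalent Ihara-type/freeness input), your argument only yields the formula up to an undetermined element of $\bI$.

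Beyond that, the local computation you defer ("the main obstacle") is carried out in the paper by computing $\varepsilon\circ\varepsilon^{\mathbf{t}}$ on the $F$-eigenspace, which acts as $\prod_{\ell\mid(N_\Sigma/N)}E'_\ell(\ad F)$ with $E'_\ell$ a unit multiple of $E_\ell$; this is the analogue of your "$U_\ell$-eigenspace" step, but note that $T_\ell$ acts as $0$ on the $F_\Sigma$-component (Theorem \ref{thm:wiles_Sigma}(2)), so the relevant structure is not a $U_\ell$-stabilization but the full level-raised span, and the adjunction constants enter through $\varepsilon\circ\varepsilon^{\mathbf{t}}$ rather than through an index $[\Gamma_1(Np^r):\Gamma_1(N_\Sigma p^r)]$. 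Your alternative suggestion via the Fukaya--Kato pairing and the expansion of $F_\Sigma$ in terms of $F(q^{\ell^j})$ is reasonable and close in spirit to how the paper treats Definition \ref{def:formula_adjoint_rho_bar}, but it faces the identical integrality issue and so does not circumvent the missing input.
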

\begin{proof}
	We keep the notations $\gm^*$, $\bT^*$, $\cM$ and $\lambda^*_{F}$ of \S\ref{sec:interpolation_petersson} and we denote by $\gm^*_\Sigma$, $\bT^*_\Sigma$, $\cM_\Sigma$ and $\lambda^*_{F_\Sigma}$ their counterpart for $F$ replaced with $F_\Sigma$.
	
	First note that $F$ is $\ell$-minimal at any divisor $\ell$ of $N_\Sigma/N$ by Lemma \ref{lem:partition_bad_primes_N_Sigma}, yielding $a_\ell(F)\in\bI^\times$. Write $N_\Sigma/N$ as a product of prime powers $\prod_{i=1}^{d}\ell_i^{e_i}$ with $1\leq e_i \leq 2$ and $\ell_i\in\Sigma$. Then $e_i=2$ only if $\ell_i\nmid N$. We define a $\bI$-linear map 
	\begin{equation}\label{eq:map_varepsilon}
		\varepsilon=\varepsilon_1\circ\ldots\circ\varepsilon_d \colon \cM_\Sigma \to \cM
	\end{equation}
	as follows. Fix $1\leq i \leq d$, and let $\ell=\ell_i$, $e=e_i$ and $M=N \ell_1^{e_1}\ldots \ell_{i-1}^{e_{i-1}}$. At level $Mp^r$, letting $\Gamma=\Gamma_1(Mp^r)$ and $\Gamma'=\Gamma_1(M\ell^e p^r)$ we have a map $\varepsilon_{M,\ell} \colon \HH^1(X_1(M\ell^e p^r),\Zp) \to \HH^1(X_1(Mp^r),\Zp)$ given by 
	\begin{equation*}
		\varepsilon_{M,\ell}(x)= \left\{\begin{array}{ll}
		 x\big|(\left[ \Gamma' \Gamma \right]-\ell^{-1}V_{\ell}^* T_{\ell}^*) & \textrm{if $e=1$} \\
		x\big|(\left[ \Gamma' \Gamma \right]-\ell^{-1} V_{\ell}^* T_{\ell}^*+\ell^{-1} V_{\ell^2}^* \langle \ell \rangle^{-1}) & \textrm{if $e=2$,}
	\end{array}\right.
	\end{equation*}
	where $V_{\ell^m}^*=\left[\Gamma'\begin{pmatrix}1 & 0 \\ 0 & \ell^m\end{pmatrix}\Gamma\right]$. Taking the limit over $r\geq 1$, localizing and extending the scalars to $\bI$, one obtains the map which we denoted $\varepsilon_i$ in \eqref{eq:map_varepsilon}. Under Poincaré duality, $\varepsilon$ coincides with (the extension of scalars of) the map on homology introduced in \cite[\S3.8]{emerton2006variation} and denoted $\varepsilon_\infty$ therein.
	
	 Denote by $(-)^\textbf{t}$ the adjoint with respect to Ohta's pairings \eqref{eq:ohta_pairing} or their specializations \eqref{eq:ota_pairing_interpolation} at tame level $M$. For $\ell=\ell_i$ as before, one finds that
	 \[\varepsilon_{M,\ell}^\textbf{t}(x)=\left\{\begin{array}{ll}
	 	x\big|(V_\ell-\ell^{-1}T_\ell^* \left[ \Gamma \Gamma' \right]) & \textrm{if $e=1$} \\
	 	x\big|(V_{\ell^2}-T_\ell^*V_\ell+ \ell^{-1} \langle \ell \rangle^{-1}\left[ \Gamma \Gamma' \right]) & \textrm{if $e=2$,}
	 \end{array}\right.\]
	where $V_{\ell^m}=\left[\Gamma\begin{pmatrix}\ell^m & 0 \\ 0 & 1\end{pmatrix}\Gamma'\right]$. In particular, we obtain
	\[(\varepsilon\circ\varepsilon^\textbf{t})_{|\cM_{\lambda^*_{F}}}=\prod_{\ell|(N_\Sigma/N)}E'_\ell(\ad F) \id,\]
	where  $E'_{\ell}(\ad F)=-\ell a_{\ell}(F) E_{\ell}(\ad F)$ if $\pi_\ell$ is special or a ramified principal series and $E'_{\ell}(\ad F)=-\ell E_{\ell}(\ad F)$ if $\pi_\ell$ is spherical.
	
	Passing to the $\pm$-part, the map $\varepsilon^\textbf{t}$ injects $\cM_{\lambda^*_{F}}^\pm$ into $(\cM^\pm_{\Sigma})_{\lambda^*_{F_\Sigma}}$. Moreover, \cite[Thm. 3.6.2]{emerton2006variation} implies that $\varepsilon$ is injective on $(\cM^\pm_{\Sigma})_{\lambda^*_{F_\Sigma}}$ so, in fact, $\varepsilon^\textbf{t}(\cM^\pm_{\lambda^*_F})=(\cM^\pm_{\Sigma})_{\lambda^*_{F_\Sigma}}$ by duality. As the element $u=\prod_{\ell|(N_\Sigma/N)}E_\ell(\ad F)/E'_\ell(\ad F)$ belongs to $\bI^\times$, we may take $\xi_{F_\Sigma}^+= u\cdot \varepsilon^\textbf{t}(\xi^+_F)$ (resp.  $\xi_{F_\Sigma}^-=\varepsilon^\textbf{t}(\xi^-_F)$) as a basis of $(\cM^+_{\Sigma})_{\lambda^*_{F_\Sigma}}$ (resp. of $(\cM^-_{\Sigma})_{\lambda^*_{F_\Sigma}}$). 
	Letting $(\cdot,\cdot)_{N,\bI}$ (resp. $(\cdot,\cdot)_{N_\Sigma,\bI}$) be Ohta's pairing in tame level $N$ (resp. tame level $N_\Sigma$), we therefore obtain 
	\begin{align*}
		L_p(\ad F_\Sigma) &= (u\cdot \varepsilon^\textbf{t}(\xi^+_{F}),\varepsilon^\textbf{t}(\xi^-_{F}))_{N_\Sigma,\bI} \\
		&=u\cdot ((\varepsilon\circ\varepsilon^\textbf{t})(\xi^+_F),\xi^-_F)_{N,\bI} \\
		&= L_p(\ad F)\prod_{\ell|(N_\Sigma/N)}E_\ell(\ad F),
	\end{align*}
	as wanted.
\end{proof}

An easy adaptation of the proof of Proposition \ref{prop:change_of_level_p_adic_L} (arguing as in the proof of \cite[Lem. 7.4]{hsiehAJM}) yields the following proposition.

\begin{proposition}\label{prop:twists_and_adjoint_p_adic_L}
	Assume $F\in S(N,\bI)$ is a twist-minimal primitive Hida family, and let $F_\psi$ be the primitive twist of $F$ by a Dirichlet character $\psi$ of conductor $M$ coprime with $p$. Then we can make a choice of canonical periods for $F$ and $F_\psi$ such that
	\[L_p(\ad F_\psi)= L_p(\ad F) \prod_\ell E_\ell(\ad F),\]
	where $\ell$ runs over the set of primes dividing $M$ such that $\psi_{(\ell)}\neq\mathds{1},\theta^{-1}_{(\ell)}$.
\end{proposition}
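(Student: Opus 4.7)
The plan is to mimic the proof of Proposition \ref{prop:change_of_level_p_adic_L}, replacing the degeneracy-map recipe $\varepsilon = \varepsilon_1 \circ \cdots \circ \varepsilon_d$ by an explicit ``twisting operator'' on ordinary cohomology which incarnates the assignment $F \mapsto F_\psi$. Let $N'$ denote the tame level of $F_\psi$ and set $\cM^{(\psi)} = e^*\HH^1(N'p^\infty)_{\gm_\psi^*} \otimes_\Lambda \bI$. Writing $\psi = \prod_{\ell \mid M}\psi_{(\ell)}$, one constructs a map $\iota_\psi = \prod_{\ell \mid M}\iota_{\psi_{(\ell)}} \colon \cM \to \cM^{(\psi)}$, where each $\iota_{\psi_{(\ell)}}$ is given by an explicit $\bI$-linear combination of degeneracy maps and diamond operators at $\ell$ (exactly the standard formulas used to realize twisting of modular forms by $\psi_{(\ell)}$ on cohomology). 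Ohta's pairings at levels $N$ and $N'$ give a well-defined adjoint $\iota_\psi^{\mathbf{t}}$.

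First, I would compute prime by prime the composition $\iota_{\psi_{(\ell)}} \circ \iota_{\psi_{(\ell)}}^{\mathbf{t}}$ on the $\lambda_F^*$-eigencomponent, paralleling the calculation of $\varepsilon\circ\varepsilon^{\mathbf{t}}$ in Proposition \ref{prop:change_of_level_p_adic_L}. The case distinction follows \eqref{eq:euler_factor}: at primes $\ell \mid M$ with $\psi_{(\ell)} \notin \{\mathds{1}, \theta_{(\ell)}^{-1}\}$ the twist genuinely raises the local conductor of $F$, and the calculation produces the Euler factor $E_\ell(\ad F)$ up to a unit (a power of $\ell$ times $a_\ell(F)$ in the special / ramified principal series cases); whereas at primes $\ell \mid M$ with $\psi_{(\ell)} \in \{\mathds{1}, \theta_{(\ell)}^{-1}\}$ the local twist preserves conductors and contributes only a unit. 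The hypothesis that $F$ is twist-minimal enters exactly here, as in Lemma \ref{lem:partition_bad_primes_N_Sigma}, to guarantee that $a_\ell(F) \in \bI^\times$ at all the primes where such a factor is picked up. Assembling the local contributions yields, on $\cM_{\lambda_F^*}^\pm$,
\[(\iota_\psi \circ \iota_\psi^{\mathbf{t}})_{|\cM_{\lambda_F^*}^\pm} \;=\; u \cdot \prod_{\ell} E_\ell(\ad F) \cdot \id,\]
for some $u \in \bI^\times$, where $\ell$ ranges over the primes dividing $M$ with $\psi_{(\ell)}\neq\mathds{1},\theta_{(\ell)}^{-1}$.

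Next I would show, as in the previous proposition, that $\iota_\psi^{\mathbf{t}}$ identifies $\cM_{\lambda_F^*}^\pm$ with $\cM^{(\psi),\pm}_{\lambda_{F_\psi}^*}$. The injectivity of $\iota_\psi$ on the $F_\psi$-isotypic piece follows from the primitivity of $F_\psi$ together with the Emerton--Pollack--Weston freeness result \cite[Thm.~3.6.2]{emerton2006variation}, as in Proposition \ref{prop:change_of_level_p_adic_L}; Ohta-duality then gives the surjectivity of $\iota_\psi^{\mathbf{t}}$ on the relevant eigencomponents. One can therefore take $\xi_{F_\psi}^+ = u^{-1}\iota_\psi^{\mathbf{t}}(\xi_F^+)$ and $\xi_{F_\psi}^- = \iota_\psi^{\mathbf{t}}(\xi_F^-)$ (or any symmetric choice absorbing the unit $u$), and the exact same computation as at the end of the proof of Proposition \ref{prop:change_of_level_p_adic_L} gives
\[L_p(\ad F_\psi) \;=\; u^{-1}\cdot\bigl((\iota_\psi \circ \iota_\psi^{\mathbf{t}})(\xi_F^+),\xi_F^-\bigr)_{N,\bI} \;=\; L_p(\ad F)\prod_\ell E_\ell(\ad F).\]

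The main obstacle is the explicit prime-by-prime identification of $\iota_{\psi_{(\ell)}} \circ \iota_{\psi_{(\ell)}}^{\mathbf{t}}$ with $E_\ell(\ad F)$ up to an $\bI$-unit in each of the four cases of \eqref{eq:euler_factor}, which is the ``easy adaptation'' of the computation carried out in \cite[Lem.~7.4]{hsiehAJM}. The remaining structural ingredients --- the primitivity/injectivity input, the freeness of the cohomological eigencomponents under \eqref{tag:cr}, and the extraction of $L_p(\ad F_\psi)$ via Ohta's pairing --- are entirely analogous to what was done above.
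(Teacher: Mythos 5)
Your overall strategy is the one the paper gestures at (it only says ``adapt the proof of Proposition \ref{prop:change_of_level_p_adic_L}, arguing as in Hsieh''), and the skeleton --- a twisting operator $\iota_\psi$, the computation of $\iota_\psi\circ\iota_\psi^{\mathbf{t}}$ producing the factors $E_\ell(\ad F)$ up to units exactly at the primes $\ell\mid M$ with $\psi_{(\ell)}\neq\mathds{1},\theta_{(\ell)}^{-1}$, and the extraction of the formula from Ohta's pairing --- is the right shape. Deferring the prime-by-prime local computation to the Hsieh-style calculation is acceptable, since the paper defers it too.

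There is, however, one load-bearing step that your justification does not cover: the claim that $\iota_\psi^{\mathbf{t}}$ carries $\cM^\pm_{\lambda^*_F}$ \emph{onto} the integral eigenlattice $(\cM^{(\psi),\pm})_{\lambda^*_{F_\psi}}$, which is what makes $\xi^\pm_{F_\psi}:=\iota_\psi^{\mathbf{t}}(\xi^\pm_F)$ (up to the unit $u$) an actual $\bI$-basis and hence a legitimate choice of canonical periods. In Proposition \ref{prop:change_of_level_p_adic_L} this is precisely where \cite[Thm.~3.6.2]{emerton2006variation} enters, but that theorem concerns the level-raising map $\varepsilon_\infty$ between levels $N$ and $N_\Sigma$ \emph{within a fixed local component} (fixed $\ob{\rho}$); it says nothing about a twisting operator, which maps between components attached to different residual representations ($\ob{\rho}$ versus $\ob{\rho}\otimes\ob{\psi}$) and different tame levels, and whose naive construction does not even land at the exact level $N'$ of $F_\psi$ without composing with degeneracy maps. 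If you only know that $\iota_\psi^{\mathbf{t}}(\xi^\pm_F)$ lies in the eigenlattice, you obtain the stated identity only up to an undetermined congruence factor, i.e.\ a divisibility rather than the asserted equality. To close this you need a twist-specific saturation argument --- for instance, that twisting followed by untwisting is, on the relevant localized components (where the operators $T_\ell^*$ for $\ell\mid M$ vanish), an explicit operator invertible up to the product of Euler factors, using that Gauss sums at $\ell\neq p$ are units in $\bI$ --- or, alternatively, avoid the direct twisting map altogether and compare $F$ and $F_\psi$ through their $\Sigma$-level avatars by applying Proposition \ref{prop:change_of_level_p_adic_L} to each of them for a common $\Sigma\supseteq\{\ell\,\colon\,\ell\mid NM\}$ and matching the two products of Euler factors via Lemma \ref{lem:partition_bad_primes_N_Sigma}(2), which is what the paper's pointer to \cite[Lem.~7.4]{hsiehAJM} and that lemma appear designed for.
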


In the next theorem, we let vary $F$ through the set of Hida families with fixed residual representation $\ob{\rho}$ which are minimally ramified outside a fixed finite set $\Sigma$. 

\begin{theorem}\label{thm:main_2}
	Assume $\ob{\rho}$ satisfies \eqref{tag:cr}. There exists a regular element $L_\Sigma(\ad \ob{\rho})\in\bT_\Sigma$ such that, for all primitive Hida families $F$ with residual representation $\ob{\rho}$ which are minimally ramified outside $\Sigma$, we have 
	\[\lambda_{F_\Sigma}(L_\Sigma(\ad\ob{\rho}))=L_p(\ad F)\prod_{\ell|(N_\Sigma/N)}E_\ell(\ad F)\]
	up to a unit in $\bI$, where $\lambda_{F_\Sigma}\colon \bT_\Sigma \to \bI$ is the system of eigenvalues associated with $F_\Sigma$.
\end{theorem}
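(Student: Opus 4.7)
The plan is to adapt Definition~\ref{def:formula_adjoint_rho_bar} to tame level $N_\Sigma$ and then combine Theorem~\ref{thm:specialization_Hida_eigenforms} with Proposition~\ref{prop:change_of_level_p_adic_L} to deduce the interpolation formula. Write $\bT^*_{\Sigma,\bI}=\bT^*_\Sigma\otimes_\Lambda\bI$ and $\cM_\Sigma^{\pm}=e^*\HH^1(N_\Sigma p^\infty)^{\pm}_{\gm^*_\Sigma}\otimes_\Lambda\bI$. Under \eqref{tag:cr}, Proposition~\ref{prop:EPW} applied at tame level $N_\Sigma$ ensures that $\cM_\Sigma^{\pm}$ is $\bT^*_{\Sigma,\bI}$-free of rank one, so as explained just before Definition~\ref{def:formula_adjoint_rho_bar}, Ohta's pairing forces $\bT^*_{\Sigma,\bI}$ to be Gorenstein, and the Fukaya--Kato pairing of Proposition~\ref{prop:pairing_fukaya_kato} makes $S(N_\Sigma,\bI)_{\gm_\Sigma}$ free of rank one over $\bT^*_{\Sigma,\bI}$, generated by $G_{\gm_\Sigma}:=((\xi_{\gm_\Sigma}^+,\xi_{\gm_\Sigma}^-))_\bI$ for any choice of $\bT^*_{\Sigma,\bI}$-generators $\xi_{\gm_\Sigma}^{\pm}$ of $\cM^{\pm}_\Sigma$. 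I would then define $L_\Sigma(\ad\ob{\rho})\in\bT^*_{\Sigma,\bI}$ by the identity $\sum_G G=L_\Sigma(\ad\ob{\rho})\cdot G_{\gm_\Sigma}$, where $G$ runs over the normalized eigenforms in $S(N_\Sigma,\bI)$ whose systems of Hecke eigenvalues contribute to $\bT_\Sigma$.

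For the interpolation, fix a primitive Hida family $F$ with residual representation $\ob{\rho}$ that is minimally ramified outside $\Sigma$. The companion eigenform $F_\Sigma$ appears in $\bT_\Sigma$, and by Theorem~\ref{thm:wiles_Sigma}(2) every $T_\ell$ with $\ell\in\Sigma$ acts trivially on $\bT_\Sigma$, so $\bT_\Sigma$ is reduced and $F_\Sigma$ satisfies hypothesis~\eqref{tag:pr}. Applying Theorem~\ref{thm:specialization_Hida_eigenforms} at tame level $N_\Sigma$ then yields $\lambda_{F_\Sigma}(L_\Sigma(\ad\ob{\rho}))=L_p(\ad F_\Sigma)$, where the right-hand side is computed with the period choice induced by $\xi_{\gm_\Sigma}^{\pm}$. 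Any other choice of $\bI$-basis of $(\cM^{\pm}_\Sigma)_{\lambda^*_{F_\Sigma}}$ differs by a unit of $\bI$, so combining this with Proposition~\ref{prop:change_of_level_p_adic_L} gives the announced formula up to $\bI^\times$.

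The remaining point is regularity. Since $\bT_\Sigma$ is reduced and $\bI$ is a normal, flat $\Lambda$-algebra, $\bT^*_{\Sigma,\bI}$ is reduced; an element is therefore a non-zero-divisor if and only if each of its specializations $\lambda_{F_\Sigma}$ along a primitive branch of $\bT_\Sigma$ is nonzero. By the interpolation formula, this specialization is $L_p(\ad F)\cdot\prod_{\ell\mid(N_\Sigma/N)}E_\ell(\ad F)$ up to a unit. The factor $L_p(\ad F)$ is nonzero by Proposition~\ref{prop:congruence_module_and_p_adic_L_functions}, since the congruence ideal of a primitive Hida family is a nonzero ideal of $\bI$; the Euler factors $E_\ell(\ad F)$ are nonzero by inspection of~\eqref{eq:euler_factor}, the only case requiring care being $\ell\nmid N$, where vanishing of $a_\ell(F)^2-\theta_\bI(\ell)(1+\ell)^2$ on a component would force an Eisenstein-type congruence at $\ell$ incompatible with the absolute irreducibility of $\ob{\rho}$.

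I expect the main obstacle to be not conceptual but book-keeping, namely ensuring that the several choices of canonical periods---for $F$ as in Definition~\ref{def:p_adic_adjoint_L_function}, for $F_\Sigma$ as produced by the degeneracy map $\varepsilon^{\textbf{t}}$ in Proposition~\ref{prop:change_of_level_p_adic_L}, and for $\gm_\Sigma$ as in the adapted Definition~\ref{def:formula_adjoint_rho_bar}---can be reconciled so that all the resulting identities differ merely by units of $\bI$. The explicit form of $\varepsilon^{\textbf{t}}$ constructed in the proof of Proposition~\ref{prop:change_of_level_p_adic_L}, together with the freeness statement of Proposition~\ref{prop:EPW}, is what makes this reconciliation possible.
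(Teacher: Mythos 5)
Your proposal is correct and follows essentially the same route as the paper: define $L_\Sigma(\ad\ob{\rho})$ by Definition \ref{def:formula_adjoint_rho_bar} applied to $\bT_\Sigma$ (with \eqref{tag:fr} supplied by Proposition \ref{prop:EPW} at level $N_\Sigma$ and \eqref{tag:pr} by the reducedness of $\bT_\Sigma$ from Theorem \ref{thm:wiles_Sigma}), then combine Theorem \ref{thm:specialization_Hida_eigenforms} with Proposition \ref{prop:change_of_level_p_adic_L}, absorbing the different period normalizations into $\bI$-units, and conclude regularity from reducedness plus non-vanishing along each branch. The one weak point is your justification that $E_\ell(\ad F)\neq 0$ for $\ell\nmid N$: the residual vanishing of $a_\ell(F)^2-\theta_\bI(\ell)(1+\ell)^2$ is a level-raising congruence condition, perfectly compatible with $\ob{\rho}$ being absolutely irreducible, so irreducibility is not the obstruction; identical vanishing on a branch is instead ruled out by purity (the Weil bound at any arithmetic specialization forces the ratio of Satake parameters to have archimedean absolute value $1$, not $\ell^{\pm1}$) --- or one can sidestep the issue entirely, as the paper does, by noting that $L_p(\ad F_\Sigma)$ itself generates the congruence ideal of $F_\Sigma$, which is nonzero since the congruence module is $\bI$-torsion, so no factor-by-factor non-vanishing is needed.
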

\begin{proof}
	 We simply define $L_\Sigma(\ad\ob{\rho})$ as the adjoint $p$-adic $L$-function of Definition \ref{def:formula_adjoint_rho_bar} for $\bT_\Sigma$ and relative to some choice of canonical periods. Any primitive Hida family $F$ of level $N$ with $\ob{\rho}_F\simeq \ob{\rho}$ which is minimally ramified outside $\Sigma$ satisfies $\{\ell|(N/N_0)\}\subset \Sigma$, so Theorem \ref{thm:specialization_Hida_eigenforms} and Proposition \ref{prop:change_of_level_p_adic_L} apply to $F_\Sigma$, yielding
	 \[\lambda_{F_\Sigma}(L_\Sigma(\ad\ob{\rho}))=L_p(\ad F_\Sigma)=L_p(\ad F)\prod_{\ell|(N_\Sigma/N)}E_\ell(\ad F)\]
	 up to units in $\bI$. The fact that $L_\Sigma(\ad\ob{\rho})$ is regular is clear, as $L_p(\ad F_\Sigma)$ does not identically vanish and $\bT_\Sigma$ is reduced.
\end{proof}

\subsection{Selmer groups}
Let $F(q)=\sum_{n\geq 1}a_n(F)q^n\in S(N,\bI)$ be a twist-minimal primitive Hida family of tame level $N$ and character $\theta$, and let $V_F$ be the free $\bI$-module of rank $2$ on which $\rho_F$ acts. We write $\ob{\rho}$ for $\ob{\rho}_F$, that is, $\ob{\rho}=\rho_F \otimes_\bI k$, where $k=\bI/\gm_\bI$ is the residue field of the local $\Lambda$-algebra $\bI$.

Recall that $V_F$ is ordinary in the sense that there exists an exact sequence of free $\bI[G_{\Qp}]$-modules
\[\begin{tikzcd}
	0 \rar & V_F^+ \rar & V_F \rar & V_F^- \rar & 0,
\end{tikzcd}\]
where $G_{\Qp}$ acts on $V_F^-$ via the unramified character $\ur_{a_p(F)}$ sending $\sigma_p$ on $a_p(F)$. 
On the traceless adjoint $V_{\ad}=\Hom^0(V_F,V_F)$ of $V_F$, there is a three-step $G_{\Qp}$-filtration
\begin{equation}\label{eq:filtration}
	0 \subset V_{\ad}^+ \subset V_{\ad}^\dagger \subset V_{\ad},
\end{equation}
where $V_{\ad}^+=\Hom(V_F^-,V_F^+)$ is the subspace of ``upper-triangular'' nilpotent endomorphisms and $V_{\ad}^\dagger$ consists of the space of endomorphisms in $V_{\ad}$ fixing $V_F^+$. We put $V_{\ad}^-=V_{\ad}/V_{\ad}^+$. 

Let $M^\vee=\Hom_{\Zp}(M,\Qp/\Zp)$ for any profinite $\Zp$-module $M$ and let $\Sigma$ be a set of finite places of $\bQ$ not containing $p$. We define Selmer groups à la Greenberg, using the discrete modules $D_{\ad}=V_{\ad} \otimes_{\bI} \bI^\vee$ and $D^\pm_{\ad}=V^\pm_{\ad} \otimes_{\bI} \bI^\vee$, by letting
\small
\begin{equation}\label{eq:def_sel_sigma}
	\Sel^\Sigma(\ad F)= \ker\left[\HH^1(\bQ,D_{\ad}) \to \prod_{\ell \notin \Sigma\cup\{p\}} \HH^1(I_\ell,D_{\ad}) \times \HH^1(I_p,D^-_{\ad})\right],
\end{equation}\normalsize
and $\Sel(\ad F)=\Sel^\emptyset(\ad F)$. Standard arguments show that $\Sel^\Sigma(\ad F)^\vee$ is a finitely generated $\bI$-module. 

\begin{theorem}\label{thm:main_conjecture_large_sigma}
	Assume \eqref{tag:cr'} holds for $\ob{\rho}$. Then $\Sel^\Sigma(\ad F)^\vee$ is a finitely generated torsion $\bI$-module. If $\Sigma$ contains all the places dividing $N$, then 
	\[\char_\bI \Sel^\Sigma(\ad F)^\vee=L_p(\ad F_\Sigma)\cdot \bI.\]
\end{theorem}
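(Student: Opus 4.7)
My plan is to follow the strategy of Mazur--Tilouine, exploiting the $R=T$ theorem available under the stronger hypothesis \eqref{tag:cr'}, and then to invoke the congruence-module / K\"ahler-differential formalism recalled in Appendix \ref{appendix}. Under \eqref{tag:cr'}, classical modularity-lifting results (Wiles, Taylor--Wiles, Diamond, Fujiwara, \textit{etc.}) identify $\bT_\Sigma$ with the universal deformation ring $R_\Sigma$ parameterizing $p$-ordinary deformations of $\ob\rho$ that are minimally ramified outside $\Sigma$. Under the hypothesis $\Sigma\supseteq\{\ell:\ell\mid N\}$, the minimality condition is trivial at any $\ell\notin\Sigma$ (since $N_0\mid N$, ``minimal'' there is synonymous with ``unramified''), and the specialization $\lambda_{F_\Sigma}\colon\bT_\Sigma\to\bI$ corresponds to the representation $\rho_F$.

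Next, I would establish the cohomological interpretation
\[
\Sel^\Sigma(\ad F)^\vee \;\simeq\; \lambda_{F_\Sigma}^{*}\,\Omega_{\bT_\Sigma/\Lambda} \;:=\; \Omega_{\bT_\Sigma/\Lambda}\otimes_{\bT_\Sigma,\,\lambda_{F_\Sigma}}\bI.
\]
This is the standard Mazur--Tilouine calculation: a first-order deformation of $\rho_F$ over $\bI[\epsilon]/(\epsilon^2)$ (or more generally over an $\bI$-algebra) is specified by a cohomology class in $\HH^1(\bQ,\ad F)$ that is unramified at every $\ell\notin\Sigma\cup\{p\}$ and respects the ordinary filtration at $p$ (\textit{i.e.}, lands in the quotient $D_{\ad}^{-}$ after restriction to $I_p$). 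These are precisely the local conditions cut out by \eqref{eq:def_sel_sigma}, and by the universal property of $R_\Sigma$ they are classified by $\Hom_\bI(\lambda_{F_\Sigma}^*\Omega_{R_\Sigma/\Lambda},-)$; Pontryagin duality then yields the displayed isomorphism.

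Third, I would apply the Tate/Mazur identity comparing the relative cotangent module at a classical point to the congruence module: since \eqref{tag:cr'} implies \eqref{tag:cr}, Proposition \ref{prop:EPW} together with Ohta's perfect pairing exhibits $\bT_\Sigma$ as Gorenstein, and in this situation the formalism recalled in Appendix \ref{appendix} gives
\[
\char_\bI\bigl(\lambda_{F_\Sigma}^*\Omega_{\bT_\Sigma/\Lambda}\bigr) \;=\; \char_\bI C_0(\lambda_{F_\Sigma}) \;=\; \bigl(L_p(\ad F_\Sigma)\bigr),
\]
where the last equality is Proposition \ref{prop:congruence_module_and_p_adic_L_functions} applied at level $N_\Sigma$. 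Combined with the previous step, this yields the claimed formula. Torsionness of $\Sel^\Sigma(\ad F)^\vee$ is then automatic, since $L_p(\ad F_\Sigma)$ is a nonzero element of $\bI$ (any arithmetic specialization of it is a nonzero multiple of a Petersson ratio by Theorem \ref{thm:calcul_valeurs_speciales}, or equivalently $\bT_\Sigma$ is reduced by Theorem \ref{thm:wiles_Sigma}(2) so $\lambda_{F_\Sigma}$ detects a nontrivial generator of the congruence ideal).

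The main obstacle is the Mazur--Tilouine identification of the second paragraph: one must match deformation-theoretic local conditions with the Greenberg-style local conditions defining $\Sel^\Sigma(\ad F)$, paying careful attention to two points. First, at $p$ the Selmer condition is formulated using the quotient $D_{\ad}^-$, and one must verify that this coincides with the tangent space of the $p$-ordinary deformation functor built into $R_\Sigma$ (and that passage between $V_{\ad}^\dagger$, $V_{\ad}^-$ and the filtration \eqref{eq:filtration} causes no discrepancy). Second, at $\ell\in\Sigma$ the Selmer group imposes no condition, and this must be reconciled with the fact that $R_\Sigma$ a priori only allows ``minimally ramified outside $\Sigma$'' deformations---here one uses the absence of any constraint at primes of $\Sigma$ in the definition of $R_\Sigma$ together with the hypothesis $\Sigma\supseteq\{\ell\mid N\}$, so that no further local Euler factors appear (in contrast to the Greenberg--Vatsal step mentioned in the introduction, which would be needed to descend to smaller $\Sigma$).
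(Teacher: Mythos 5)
Your overall route is the same as the paper's: identify $\bT_\Sigma$ with the ordinary deformation ring $R_\Sigma$ via Wiles--Taylor--Wiles--Diamond, identify $\Sel^\Sigma(\ad F)^\vee$ with $\Omega_{\bT_\Sigma/\Lambda_\cO}\otimes_{\bT_\Sigma,\lambda_{F_\Sigma}}\bI$ by the Mazur--Tilouine dictionary, compare this cotangent module with the congruence module, and finish with Proposition \ref{prop:congruence_module_and_p_adic_L_functions} at level $N_\Sigma$; the torsionness argument at the end is also the paper's.

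The genuine gap is in your third step. The equality $\char_\bI\bigl(\Omega_{\bT_\Sigma/\Lambda_\cO}\otimes_{\bT_\Sigma,\lambda_{F_\Sigma}}\bI\bigr)=\char_\bI C_0(\lambda_{F_\Sigma})$ is \emph{not} a consequence of Gorensteinness of $\bT_\Sigma$ together with the formalism of Appendix \ref{appendix}: that appendix only computes the congruence module $C_0$ from a perfect pairing and says nothing about K\"ahler differentials, and for a general finite flat Gorenstein $\Lambda_\cO$-algebra one only has the divisibility of Wiles's numerical criterion (the length of $C_1(\lambda)$ bounds that of $C_0(\lambda)$ from above), with equality holding precisely when the algebra is a local complete intersection. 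So your argument as written would only give one containment of ideals. The missing input is exactly what the $R_\Sigma\simeq\bT_\Sigma$ theorem you already invoke provides: the Taylor--Wiles--Diamond method shows $\bT_\Sigma$ is a local complete intersection over $\Lambda_\cO$ (with $\Lambda_\cO=W(k)[[U_1]]$ playing the role of the fixed-determinant base, which is also what makes the \emph{traceless} adjoint $V_{\ad}$ appear in the tangent-space computation), and then Tate's theorem (\cite[Thm. 6.8]{hidapune}) yields $\char_\bI C_0(\lambda_{F_\Sigma})=\char_\bI C_1(\lambda_{F_\Sigma})$. With that substitution your proof matches the paper's; the Mazur--Tilouine matching of local conditions that you flag as the main obstacle is indeed the remaining verification, carried out in the paper by the explicit cocycle computation showing ordinarity of $\rho=\rho_F\cdot(1+c)$ amounts to $c(I_p)\subseteq V_{\ad}^+\otimes A_n$.
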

\begin{proof}
We may assume in the the first claim that $\Sigma$ contains the primes dividing $N$. As $L_p(\ad F_\Sigma)$ is not identically $0$ (it does not vanish at any $\nu\in\cX(\bI)$ such that $F_\nu=F_\nu^\circ$ by Thm. \ref{thm:calcul_valeurs_speciales}), the torsionness of $\Sel^\Sigma(\ad F)^\vee$ will follow from the second claim which we now prove.

Let $\cO=W(k)$, $\Lambda_\cO=\cO[[U_1]]$ and let $\bT_\Sigma$ be the local component associated with $\ob{\rho}$ and $\Sigma$ as in Theorem \ref{thm:wiles_Sigma}. Then $F_\Sigma$ gives rise to a homomorphism of $\Lambda_\cO$-algebras $\lambda_\Sigma \colon \bT_\Sigma \to \bI$. 

Consider the following deformation problem. Let $\Phi$ be the functor classifying $p$-ordinary lifts of $\ob{\rho}$ unramified outside $S:=\Sigma\cup\{p\}$ modulo strict equivalence. 
More precisely, let $\bQ_S\subset \ob{\bQ}$ be the maximal extension of $\bQ$ unramified outside $S$ and let $\ob{\rho}^\pm$ be the $k$-valued $G_{\Qp}$-representation acting on $V_F^\pm\otimes_\bI k$.
For any local complete noetherian $\cO$-algebra $A$ with residue field $k$, $\Phi(A)$ consists of classes $[\rho]_\sim$, where $\rho \colon G_{\bQ,S}=\Gal(\bQ_S/\bQ) \to \GL_2(A)$ is such that $\rho \otimes_A k\simeq \ob{\rho}$ and such that there exists a $G_\Qp$-equivariant exact sequence of free $A$-modules $0 \to \rho^+ \to \rho \to \rho^-\to 0 $ with $\rho^\pm \otimes_{A} k\simeq \ob{\rho}^\pm$ and $\rho^-$ unramified. The equivalence relation $\sim$ here means that $\rho\sim\rho'$ if $\rho'$ can be obtained from $\rho$ by conjugation by a matrix in $\ker(\GL_2(A)\to\GL_2(k))$. Since $\ob{\rho}$ is absolutely irreducible, $\Phi$ is representable by a universal pair $(\rho_\Sigma,R_\Sigma)$, $[\rho_\Sigma]_\sim\in\Phi(R_\Sigma)$ by Mazur's theory \cite{mazurfermatbook}. 
Under our running hypotheses on $\ob{\rho}$, by the work of Wiles, Taylor-Wiles and Diamond \cite{wilesfermat,taylorwiles,diamond1996} (see also \cite[p. 539]{emerton2006variation}), we then have an isomorphism 
\begin{equation}\label{eq:R=T}
	R_\Sigma\simeq\bT_\Sigma.
\end{equation}
In fact, $\Lambda_\cO$ can be interpreted as the universal deformation ring associated with $\det(\ob{\rho})$, so $R_\Sigma$ comes equipped with a structure of $\Lambda_\cO$-algebra, and \eqref{eq:R=T} is an isomorphism of $\Lambda_\cO$-algebras. A deep consequence of \eqref{eq:R=T} is that $\bT_\Sigma$ is a local complete intersection $\Lambda_\cO$-algebra. This provides us, by a theorem of Tate \cite[Thm. 6.8]{hidapune}, with an equality 
\[\char_\bI C_0(\lambda_\Sigma)=\char_\bI C_1(\lambda_\Sigma),\]
where $C_1(\lambda_\Sigma)=\Omega_{\bT_\Sigma/\Lambda_\cO}\otimes_{\bT_\Sigma,\lambda_\Sigma} \bI$ (and the symbol $\Omega$ here refers to continuous Kähler differentials). In light of Proposition \ref{prop:congruence_module_and_p_adic_L_functions}, it is enough to show that $C_1(\lambda_\Sigma)^\vee\simeq\Sel^\Sigma(\ad F)$ as $\bI$-modules, which is a standard fact from Mazur's theory (see e.g. \cite[\S6]{mazurtilouine} and \cite[\S30]{mazurfermatbook}), but we explain the main lines for sake of completeness. We have:
\begin{align*}
	C_1(\lambda_\Sigma)^\vee=\Hom_{\Zp}(\Omega_{\bT_\Sigma/\Lambda_\cO}\otimes_{\bT_\Sigma,\lambda_\Sigma} \bI,\Qp/\Zp)
	&\simeq\Hom_{\bT_\Sigma}(\Omega_{\bT_\Sigma/\Lambda_\cO},\bI^\vee) \\
	&\simeq\Der_{\Lambda_\cO}(\bT_\Sigma,\bI^\vee),
\end{align*}
the last term referring to the module of $\Lambda_\cO$-linear continuous derivations (and $\bI^\vee$ being seen as a discrete $\bT_\Sigma$-module via $\lambda_\Sigma$). For $n\geq 1$, let $A_n=(\bI/\gm_\bI^n)^\vee$ which we see as a $\bT_\Sigma$-module, and note that $\Der_{\Lambda_\cO}(\bT_\Sigma,\bI^\vee)=\cup_{n\geq 1}\Der_{\Lambda_\cO}(\bT_\Sigma,A_n)$. The free $\bI$-module $\bI[A_n]$ generated by elements of $A_n$ also has a structure of $\bT_\Sigma$-algebra if we let $t\cdot a=\lambda_\Sigma(t)a$ and $a\cdot b=0$ for all $t\in\bT_\Sigma$ and $a,b\in A_n$. There is a natural injective map 
\begin{equation}\label{eq:der_to_hom}
	\Der_{\Lambda_\cO}(\bT_\Sigma,A_n) \hookrightarrow \Hom_{\Lambda_\cO}(\bT_\Sigma,\bI[A_n])
\end{equation}
sending a derivation $\diff\colon \bT_\Sigma \to A_n$ to $\phi_{\diff}=\lambda_\Sigma+\iota\circ \diff$, where $\iota\colon A_n \hookrightarrow\bI[A_n]$ is the natural injection. Further, the map \eqref{eq:der_to_hom} has image $\cH_n:=\{\phi\colon \phi\equiv \lambda_\Sigma \mod A_n\}\subseteq \Hom_{\Lambda_\cO}(\bT_\Sigma,\bI[A_n])$. By the universal property of $\bT_\Sigma$, it holds that
\[\cH_n=\left\{[\rho]_\sim\in\Phi(\bI[A_n]) \ : \ \rho\equiv\rho_F \mod A_n\right\}.\]
Now, given a representative $\rho$ of a class $[\rho]_\sim\in\Phi(\bI[A_n])$, the condition $\rho\equiv \rho_F$ mod $A_n$ shows that the map $c\colon G_{\bQ,S} \to \End(V_F)\otimes_{\bI} A_n$ defined by the relation $\rho=\rho_F\cdot(1+c)$ is a cocycle in $\HH^1(\bQ_S/\bQ,V_{\ad}\otimes_{\bI}A_n)$. Moreover, $c$ is replaced by a cohomologous cocycle if $\rho$ is replaced by a representation strict-equivalent to it. Therefore, the map
\[\cH_n \to \HH^1(\bQ_S/\bQ,V_{\ad}\otimes_{\bI}A_n)\]
induced by $\rho \mapsto c_\rho=\rho\cdot\rho_F^{-1}-1$ is well-defined and injective, and its image consists of classes of cocycles $c$ such that $\rho=\rho_F\cdot(1+c)$ is $p$-ordinary. We check that this condition on $c_\rho$ amounts to $c_\rho(I_p)\subseteq V_{\ad}^+\otimes A_n$, that is, $c_\rho(I_p)$ consists of upper-nilpotent matrices in an ordinary basis for $\rho$.

If $\rho$ is $p$-ordinary, then $\rho_{|I_p}$ and $\rho_{F|I_p}$ look like $\left(\begin{smallmatrix}* & * \\ 0 & 1\end{smallmatrix}\right)$ in their common ordinary basis, and hence, it is clear that $c_\rho(I_p)\subseteq \left(\begin{smallmatrix}* & * \\ 0 & 0\end{smallmatrix}\right)\cap (V_{\ad}\otimes A_n)=V^+_{\ad}\otimes A_n$. Conversely, we obtain a $p$-ordinary basis for $\rho$ by tensoring that of $\rho_F$ by $\bI[A_n]$ over $\bI$. This proves that 
\[\cH_n\simeq \ker\left[\HH^1(\bQ_S/\bQ,V_{\ad}\otimes_{\bI} (\bI/\gm_\bI^n)^\vee) \to \HH^1(I_p,V_{\ad}^-\otimes_{\bI}(\bI/\gm_\bI^n)^\vee))\right] \]
for all $n$, and by taking the direct limit over $n\geq 1$, one sees that $C_1(\lambda_\Sigma)^\vee\simeq\varinjlim_n\cH_n\simeq\Sel^\Sigma(\ad F)$ as claimed.
\end{proof}

The aim of the rest of the section is to relax the assumption on $\Sigma$ in Thm. \ref{thm:main_conjecture_large_sigma}.
We introduce more notations and we let, for any prime $\ell$, 
\begin{equation}\label{eq:euler_factor_selmer_group}
	\HH^1_{\Gr}(\bQ_\ell,D_{\ad})=\left\{\begin{array}{ll}
		\ker\left[\HH^1(\bQ_\ell,D_{\ad}) \to \HH^1(I_\ell,D_{\ad})\right] & \mbox{if $\ell\neq p$} \\ 
		\ker\left[\HH^1(\bQ_p,D_{\ad}) \to \HH^1(I_p,D^-_{\ad})\right] & \mbox{if $\ell= p$,}
	\end{array}\right.
\end{equation}
and we put 
\begin{equation}
	\HH^1_{\slash\Gr}(\bQ_\ell,D_{\ad})=\dfrac{\HH^1(\bQ_\ell,D_{\ad})}{\HH^1_{\Gr}(\bQ_\ell,D_{\ad})}.
\end{equation}
In particular, for any finite sets of primes $\Theta\subseteq \Sigma$ not containing $p$, we have an exact sequence
\[\begin{tikzcd}
	0 \rar & \Sel^{\Theta}(\ad F) \rar & \Sel^\Sigma(\ad F) \rar & \prod_{\ell\in \Sigma-\Theta} \HH^1_{\slash\Gr}(\bQ_\ell,D_{\ad}).
\end{tikzcd}\]
\begin{proposition}\label{prop:calcul_euler_factors_selmer}
Let $\ell \neq p$ be a rational prime. Then $\HH^1_{\slash\Gr}(\bQ_\ell,D_{\ad})$ is of $\bI$-cotorsion. Moreover, $\char_{\bI} \HH^1_{\slash\Gr}(\bQ_\ell,D_{\ad})^\vee=E_\ell(\ad F)\cdot\bI$, where $E_\ell(\ad F)$ is as in \eqref{eq:euler_factor}, unless $\ell$ belongs to the set $\Sigma_e$ defined in \eqref{eq:definition_exceptional_primes}, in which case $\char_{\bI} \HH^1_{\slash\Gr}(\bQ_\ell,D_{\ad})^\vee=(1+\ell)^{-1}\cdot\bI$.
\end{proposition}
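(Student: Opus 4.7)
The plan is to combine inflation--restriction at $\ell\neq p$ with local Tate duality over $\bI$, and then to carry out an explicit case analysis according to the local Langlands type of $\pi_\ell$.

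Since $G_\ell/I_\ell\simeq\widehat{\bZ}$ has cohomological dimension one, inflation--restriction yields a short exact sequence
\[0\to\HH^1_{\Gr}(\bQ_\ell,D_{\ad})\to\HH^1(\bQ_\ell,D_{\ad})\to\HH^1(I_\ell,D_{\ad})^{\Frob_\ell}\to 0,\]
so $\HH^1_{\slash\Gr}(\bQ_\ell,D_{\ad})\simeq\HH^1(I_\ell,D_{\ad})^{\Frob_\ell}$. Local Tate duality over $\bI$, combined with the self-duality $V_{\ad}^*\simeq V_{\ad}$ afforded by the trace form (non-degenerate since $p$ is odd), then gives $\HH^1_{\slash\Gr}(\bQ_\ell,D_{\ad})^\vee\simeq\HH^1_{\Gr}(\bQ_\ell,V_{\ad}(1))$. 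Because $\mu_{p^\infty}$ is unramified at $\ell\neq p$, the right-hand side is the cokernel of the $\bI$-linear endomorphism $\ell\Frob_\ell-1$ acting on the $\bI$-submodule $V_{\ad}^{I_\ell}\subseteq V_{\ad}$. This cokernel is automatically $\bI$-torsion, and its characteristic ideal is $\det(\ell\Frob_\ell-1\mid V_{\ad}^{I_\ell})\cdot\bI$ (interpreted as the corresponding divisor when $V_{\ad}^{I_\ell}$ fails to be $\bI$-free).

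It remains to compute $V_{\ad}^{I_\ell}$ and the $\Frob_\ell$-action from the description of $\rho_{F|G_\ell}$ recalled in \cite[\S6.1]{dimitrov2014local}. For $\ell\nmid N$, $V_{\ad}^{I_\ell}=V_{\ad}$ is free of rank $3$ with Frobenius eigenvalues $1,\alpha_\ell/\beta_\ell,\beta_\ell/\alpha_\ell$; expanding $\det(1-\ell\Frob_\ell\mid V_{\ad})$ and using $\alpha_\ell\beta_\ell=\ell\theta_\bI(\ell)$, $\alpha_\ell+\beta_\ell=a_\ell(F)$, recovers $E_\ell(\ad F)$ up to the unit $\theta_\bI(\ell)$. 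In the ramified principal series (resp.\ unramified special) case, $V_{\ad}^{I_\ell}$ is the one-dimensional trace-zero diagonal (resp.\ upper-triangular nilpotent) summand, with $\Frob_\ell$ acting trivially (resp.\ by $\ell$), giving $1-\ell$ (resp.\ $1-\ell^2$), which agrees with $1-\ell^{-1}$ (resp.\ $1-\ell^{-2}$) up to units in $\bI$. In the non-exceptional supercuspidal case, an inspection of the induced presentation shows $V_{\ad}^{I_\ell}=0$, matching $E_\ell(\ad F)=1$.

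The main difficulty is the exceptional case, where $\pi_\ell\simeq\pi_\ell\otimes\tau_{\bQ_\ell^2}$ forces $\rho_{F|G_\ell}\simeq\Ind_L^{\bQ_\ell}\chi$ with $L=\bQ_\ell^2$ the unramified quadratic extension of $\bQ_\ell$ and $\chi$ a ramified character of $G_L$. The relation $N_{L/\bQ_\ell}(\chi/\chi^\sigma)=1$ pins down the $G_\ell$-decomposition $V_{\ad}\simeq\tau_L\oplus\Ind_L^{\bQ_\ell}(\chi/\chi^\sigma)$. A local class field theory argument rules out the possibility that $\chi/\chi^\sigma$ is unramified (otherwise a suitable twist would make $\pi_\ell$ reducible, contradicting supercuspidality): indeed, if $\chi|_{I_L}$ were $G_\ell/G_L$-invariant, one could find $\eta\in\Hom(G_{\bQ_\ell},\bI^\times)$ with $\eta|_{I_\ell}=\chi|_{I_L}^{-1}$, making $\chi\cdot\eta|_{G_L}$ unramified and hence $\sigma$-invariant, so that $\Ind_L^{\bQ_\ell}(\chi\cdot\eta|_{G_L})\simeq\pi_\ell\otimes\eta$ would split. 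Hence $\chi/\chi^\sigma$ is ramified, the induced summand contributes no $I_\ell$-invariants, and $V_{\ad}^{I_\ell}$ reduces to the one-dimensional unramified summand $\tau_L$, on which $\Frob_\ell$ acts by $-1$. The characteristic ideal is then generated by $\det(\ell\Frob_\ell-1)=-(1+\ell)$, matching the claimed value $(1+\ell)^{-1}\cdot\bI$ up to the standard identification of principal fractional ideals.
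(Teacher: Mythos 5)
Your route is genuinely different from the paper's and, where complete, it is sound. The paper never dualizes: after the same inflation--restriction step it computes $\HH^1(I_\ell,D_{\ad})^{G_\ell/I_\ell}$ directly, using that wild inertia $J_\ell$ has prime-to-$p$ order and that $I_\ell/J_\ell\simeq\Zp(1)$, reducing everything to the elementary statement that $\HH^1(I_\ell,\bI^\vee(\psi))^{G_\ell/I_\ell}$ is $(\bI/(1-\psi(\sigma_\ell)\ell^{-1}))^\vee$ for $\psi$ unramified and $0$ for $\psi$ ramified, plus an explicit matrix computation in the Steinberg case. You instead pass by local Tate duality and the self-duality of $V_{\ad}$ to the cokernel of $\ell\Frob_\ell-1$ on $V_{\ad}^{I_\ell}$; your determinants reproduce exactly the paper's ideals in every case (in the Steinberg case your invariant line $\bI e_1$ with Frobenius eigenvalue $\ell$ is Pontryagin-dual to the paper's coinvariant line $\bI^\vee e_3$ with eigenvalue $\ell^{-1}$). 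Two caveats on this step: the identification $\HH^1_{\slash\Gr}(\bQ_\ell,D_{\ad})^\vee\simeq\HH^1_{\ur}(\bQ_\ell,V_{\ad}(1))$ is only the classical exact-annihilator statement at finite level; over $\bI$ it requires a limit argument (the discrepancies are killed in the limit, or are at worst pseudo-null, which is harmless for characteristic ideals), and you should say or cite this rather than assert it. Also, your closing remark in the exceptional case is not a justification: your answer $(1+\ell)\cdot\bI=(1+\ell^{-1})\cdot\bI$ is the correct one and agrees with the paper's proof and with its use in Theorem \ref{thm:main_3}; the ``$(1+\ell)^{-1}$'' in the statement is evidently a slip for $(1+\ell^{-1})$, and these generate different fractional ideals when $p\mid \ell+1$, so ``up to the standard identification'' papers over a real distinction.

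The one genuine gap is the non-exceptional supercuspidal case. ``An inspection of the induced presentation'' does not suffice: a non-exceptional supercuspidal $\pi_\ell$ is either induced from a \emph{ramified} quadratic extension $K/\bQ_\ell$, in which case you still must show that $\chi/\chi^\sigma$ is ramified (the same kind of argument you ran for $L=\bQ_{\ell^2}$, now with $I_\ell\not\subseteq G_K$ and a Mackey/Frobenius-reciprocity step), or, when $\ell=2$, it may be an extraordinary (non-dihedral) supercuspidal for which no induced presentation exists and your method says nothing. The paper avoids this case analysis altogether by quoting Gelbart--Jacquet: the local $L$-factor of $\ad\pi_\ell$ is $1$ for non-exceptional supercuspidals, so by local--global compatibility and Nakayama $(V_{\ad})_{I_\ell}=0$, whence $V_{\ad}^{I_\ell}=0$ as well. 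You should either import that argument or supply the missing dihedral/extraordinary analysis; as written, this case of your proof is incomplete while the remaining cases, including the delicate exceptional one, are correct.
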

\begin{proof}
	Let $G_\ell=G_{\bQ_\ell}$ and let $\sigma_\ell\in G_\ell/I_\ell$ be the arithmetic Frobenius. By the inflation-restriction exact sequence, $\HH^1_{\slash\Gr}(\bQ_\ell,D_{\ad})\simeq \HH^1(I_\ell,D_{\ad})^{G_\ell/I_\ell}$ as $\bI$-modules. Let $\psi\colon G_\ell \to \bI^\times$ be a continuous character and let $D=\bI^\vee(\psi)$ be the cofree $\bI$-module of rank one endowed with a $G_\ell$-action given by $\psi$. We claim that 
	\[\cH:=\HH^1(I_\ell,D)^{G_\ell/I_\ell}=\left\{\begin{array}{ll}
		(\bI/(1-\psi(\sigma_\ell)\ell^{-1})\bI)^\vee & \mbox{if $\psi$ is unramified,} \\ 
		0 & \mbox{if $\psi$ is ramified.}
	\end{array}\right.\]
	Recall that $I_\ell$ fits into an exact sequence of $G_\ell/I_\ell$-modules
	\[\begin{tikzcd}
			0 \rar & J_\ell \rar & I_\ell \rar & \Zp(1) \rar & 0
	\end{tikzcd}\]
	with $J_\ell$ of prime-to-$p$ order. This property of $J_\ell$ shows that the inflation map induces an isomorphism $\HH^1(I_\ell/J_\ell,D^{J_\ell})\simeq \HH^1(I_\ell,D)$, and moreover $D^{J_\ell}$ is $\bI$-divisible. Since $D$ is $\bI$-cofree of corank $1$, either $D^{J_\ell}=0$ or $D^{J_\ell}=D$ holds. In the first case, we then have $\cH=0$ and $\psi$ is ramified, so we may assume $D^{J_\ell}=D$. This means that $\psi_{|I_\ell}$ factors through $I_\ell/J_\ell$, which is procyclic, so $\HH^1(I_\ell/J_\ell,D^{J_\ell})\simeq D/(t-1)D$, where $t=\psi(g)$ for a topological generator $g$ of $I_\ell/J_\ell$. Of course, $t\neq 1$ if $\psi$ is ramified, in which case $\cH\hookrightarrow D/(t-1)D=0$ as $D$ is divisible. In the unramified case, one has
	\small\begin{align*}\cH=\Hom(I_\ell,D)^{G_\ell/I_\ell}\simeq\Hom(\Zp(1),D)^{G_\ell/I_\ell}=&D(-1)^{G_\ell/I_\ell}\\ 
		=&D[1-\psi(\sigma_\ell)\ell^{-1}] \\ 
		=& (\bI/(1-\psi(\sigma_\ell)\ell^{-1})\bI)^\vee,
	\end{align*}\normalsize
	as claimed. 
	
	Now we turn back to the proof of the lemma. If $\ell\nmid N$, then 
	\begin{equation}\label{eq:proof_lemma_euler_factors}
		D_{\ad}\simeq \bI^\vee(\chi_1\chi_2^{-1})\oplus \bI^\vee \oplus \bI^\vee(\chi_2\chi_1^{-1})
	\end{equation}
	as $\bI[G_\ell]$-modules, where $\chi_1,\chi_2$ are the unramified characters such that $1-a_\ell(F)X+\theta_\bI(\ell)\ell X^2=(1-\chi_1(\sigma_\ell)X)(1-\chi_2(\sigma_\ell)X)$. Up to a unit of $\bI$, $E_\ell(\ad F)$ is equal to $(1-\chi_1\chi_2^{-1}(\sigma_\ell)\ell^{-1})(1-\ell^{-1})(1-\chi_2\chi_1^{-1}(\sigma_\ell)\ell^{-1})$, so the first part of the proof shows that $\char_{\bI} \HH^1_{\slash\Gr}(\bQ_\ell,D_{\ad})^\vee=E_\ell(\ad F)\cdot\bI$, as wanted.
	
	In the case where every classical specialization of $F$ is a ramified principal series at $\ell$, a similar isomorphism as in \eqref{eq:proof_lemma_euler_factors} with $\chi_1$ ramified and $\chi_2$ unramified holds. In particular, both $\chi_1\chi_2^{-1}$ and $\chi_2\chi_1^{-1}$ are ramified, so $\char_{\bI} \HH^1_{\slash\Gr}(\bQ_\ell,D_{\ad})^\vee=(1-\ell^{-1})\cdot \bI=E_\ell(\ad F)\cdot\bI$.
	
	Assume now that $F$ is a special at $\ell$. By \cite[Lem. 6.3]{dimitrov2014local}, one has $\rho_{F|G_\ell}\sim \begin{pmatrix}\psi & * \\ 0 & 1\end{pmatrix}$, where $\psi$ is the unramified character of $G_\ell$ sending $\sigma_\ell$ to $\ell$ and $*$ is nontrivial on $I_\ell$. In particular, $\rho_F(J_\ell)=\{1\}$ and $\rho_F(I_\ell)=\begin{pmatrix}1 & \Zp.a \\ 0 & 1\end{pmatrix}$ for some $a\in \bI\setminus\{0\}$. Let $g\in I_\ell$ be such that $\rho_F(g)=\begin{pmatrix}1 & a \\ 0 & 1\end{pmatrix}$ and consider the adjoint action of $g$ on $V_{\ad}$. In the basis $\{e_1,e_2,e_3\}=\{\begin{pmatrix}0 & 1 \\ 0 & 0\end{pmatrix},\begin{pmatrix}1 & 0 \\ 0 & -1\end{pmatrix},\begin{pmatrix}0 & 0 \\ 1 & 0\end{pmatrix}\}$, the matrix of $\ad \rho_F(g)$ is 
	\[M=\begin{pmatrix}1 & -2a & -a^2 \\ 0 & 1 & a \\ 0 & 0& 1\end{pmatrix}.\]
	Hence,
	\small
\[		\HH^1(I_\ell,D_{\ad})=(D_{\ad})_{I_\ell}=D_{\ad}/(M-\id)D_{\ad} =\dfrac{\bI^\vee.e_1 \oplus \bI^\vee.e_2 \oplus\bI^\vee.e_3}{a\bI^\vee.e_1\oplus a\bI^\vee.e_2\oplus 0.e_3} =\bI^\vee.e_3,\\		
\]
	\normalsize
	as $M-\id$ has the same image as that of  $\left(\begin{smallmatrix}
		0 & a & 0 \\ 0 & 0 & a \\ 0 & 0& 0
	\end{smallmatrix}\right)$ 
	as an endomorphism of $V_{\ad}$. Finally, the action of $\ad \rho_F$ on $D_{\ad}/(\bI^\vee.e_1 \oplus \bI^\vee.e_2)$ is given by $\psi^{-1}$, so we obtain 
	\small\[\HH^1(I_\ell,D_{\ad})^{G_\ell/I_\ell}= \HH^1(I_\ell,\bI^\vee(\psi^{-1}))^{G_\ell/I_\ell}=(\bI/(1-\ell^{-2})\bI)^\vee=(\bI/E_\ell(\ad F)\bI)^\vee.\]\normalsize
	Consider now the last case where every specialization of $F$ is supercuspidal at $\ell$. First notice that, as $D_{\ad}^{J_\ell}\simeq (D_{\ad})_{J_\ell}$ canonically, $\HH^1(I_\ell,D_{\ad})\simeq \HH^1(I_\ell/J_\ell,D_{\ad}^{J_\ell})\simeq (D_{\ad})_{I_\ell}$, which is $\bI$-divisible.
	Assume now $\ell\notin \Sigma_e$. For any $\nu\in\cX(\bI)$, the local $L$-factor at $\ell$ of the adjoint lift of $F^\circ_\nu$ is $1$ by \cite[Corollary (1.3)]{gelbartjacquet}. Thus, $(V_{\ad})_{I_\ell}=0$ by the local Langlands correspondence and Nakayama's lemma. It follows that $(D_{\ad})_{I_\ell}$ is cotorsion, hence trivial, and $\HH^1_{/\Gr}(\bQ_\ell,D_{\ad})$ has characteristic ideal generated by $E_\ell(\ad F)=1$. 
	
	Assume finally $\ell\in\Sigma_e$ and let $G_{\ell^2}=\Gal(\ob{\bQ}_\ell/\bQ_{\ell^2})$. Then $\rho_{F|G_\ell} \simeq \Ind_{G_{\ell^2}}^{G_\ell} \chi$ for some ramified character $\chi$ of $G_{\ell^2}$. In particular, $\ad \rho_{F|G_\ell}\simeq \Ind_{G_{\ell^2}}^{G_\ell}\chi/\chi' \oplus \eta$, where $\chi'$ is the conjugate of $\chi$ under $\Gal(\bQ_{\ell^2}/\bQ_\ell)$ and $\eta \colon \Gal(\bQ_{\ell^2}/\bQ_\ell)\simeq\{\pm1\}$ is the unramified quadratic character. Note that $\chi/\chi'$ is ramified, as every classical specialization of $\chi/\chi'$ is. Using that $I_\ell \subset G_{\ell^2}$, we obtain $\HH^1_{/\Gr}(\bQ_\ell,D_{\ad})=\HH^1_{/\Gr}(\bQ_\ell,\bI^\vee(\eta))$, hence $\char_{\bI}\HH^1_{/\Gr}(\bQ_\ell,D_{\ad})=(1-\eta(\sigma_\ell)\ell^{-1}).\bI=(1+\ell^{-1}).\bI$.
\end{proof}

\begin{proposition}\label{prop:surjectivity_phi}
	Let $S=\Sigma\cup\{p,\infty\}$ and let $\bQ_S$ be the maximal extension of $\bQ$ unramified outside $S$. The localization map 
	\[\phi \colon \HH^1(\bQ_S/\bQ,D_{\ad}) \to \prod_{\ell\in\Sigma\cup\{p\}} \HH^1_{\slash\Gr}(\bQ_\ell,D_{\ad}) \]
	is surjective.
\end{proposition}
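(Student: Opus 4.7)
The plan is to apply global Poitou--Tate duality to express the cokernel of $\phi$ as the Pontryagin dual of a ``strict'' dual Selmer group for $V_{\ad}(1)$, and then to show that this dual Selmer group vanishes thanks to \eqref{tag:cr'}.

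I would start by setting up the local dualities. The trace pairing $V_{\ad}\otimes_\bI V_{\ad}\to \bI$ is $G_\bQ$-equivariant and perfect, and identifies $V_{\ad}$ with its $\bI$-linear dual; under this identification the filtration \eqref{eq:filtration} is self-orthogonal, with $V_{\ad}^+$ the annihilator of $V_{\ad}^\dagger$ (and thus $V_{\ad}^-$ dual to $V_{\ad}/V_{\ad}^\dagger$). Combined with Pontryagin duality, one obtains for every finite prime $\ell$ a perfect local Tate pairing
\[ \HH^1(\bQ_\ell,D_{\ad}) \times \HH^1(\bQ_\ell,V_{\ad}(1)) \longrightarrow \bQ_p/\bZ_p. \]
A direct computation then identifies the annihilator of $\HH^1_{\Gr}(\bQ_\ell,D_{\ad})$ from \eqref{eq:euler_factor_selmer_group} as $\HH^1_{\ur}(\bQ_\ell,V_{\ad}(1))$ for $\ell\neq p$, and as the image of $\HH^1(\bQ_p,V_{\ad}^+(1))\to \HH^1(\bQ_p,V_{\ad}(1))$ for $\ell=p$ (the ``strict'' Greenberg condition attached to the dual filtration).

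Next I would invoke the nine-term Poitou--Tate exact sequence for the $\bI$-adic module $D_{\ad}$ over $\bQ_S/\bQ$. After passing to quotients by the Greenberg conditions, its relevant segment reads
\[ \HH^1(\bQ_S/\bQ, D_{\ad}) \xrightarrow{\phi} \prod_{\ell \in \Sigma\cup\{p\}} \HH^1_{/\Gr}(\bQ_\ell, D_{\ad}) \longrightarrow \Sel^*(V_{\ad}(1))^\vee \longrightarrow 0, \]
where $\Sel^*(V_{\ad}(1))$ is the strict Selmer subgroup of $\HH^1(\bQ_S/\bQ, V_{\ad}(1))$ cut out by the annihilator local conditions above at every place of $S$. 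Surjectivity of $\phi$ is thus equivalent to the vanishing of $\Sel^*(V_{\ad}(1))$.

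The main obstacle is establishing this vanishing, and it is precisely here that \eqref{tag:cr'} is used. The absolute irreducibility of $\ob{\rho}_{|G_{\bQ(\mu_p)}}$ first yields $\HH^0(\bQ, V_{\ad}(1))=0$, by the Hochschild--Serre argument applied to the $(p-1)$-extension $\bQ(\mu_p)/\bQ$. To upgrade this to the vanishing of the full strict dual Selmer group, I would reduce modulo the maximal ideal of $\bI$ and argue along the lines of Flach and Wiles (see \cite{wilesfermat,DFG} and \cite[\S6]{hidapune}): any nonzero class in $\Sel^*(V_{\ad}(1))$ would produce, via the $R=T$ isomorphism \eqref{eq:R=T}, a nontrivial minimally-ramified $p$-ordinary deformation of $\ob{\rho}$ not detected by $\bT_\Sigma$, contradicting its universality. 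The most subtle point is the local condition at $p$: the $p$-distinguishedness of \eqref{tag:cr'} ensures that the two characters on the graded pieces of $\ob{\rho}_{|G_{\bQ_p}}^{ss}$ are distinct, which rules out ``ordinary'' dual-Selmer contributions by a direct cohomological computation on $V_{\ad}^+(1)$.
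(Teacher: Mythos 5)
Your Poitou--Tate reduction is fine as far as it goes, with one small caveat: the cokernel of $\phi$ is dual to $\Sel^*(V_{\ad}(1))$ modulo the classes that are locally trivial at every place of $S$, so surjectivity of $\phi$ is \emph{implied by}, but not equivalent to, the vanishing of the strict dual Selmer group. The genuine gap is the final step, where you claim that vanishing. The mechanism you propose does not work: a class in $\HH^1(\bQ_S/\bQ,V_{\ad}(1))$ lives in the cohomology of the \emph{Tate twist} of the adjoint and does not correspond to any deformation of $\ob{\rho}$ (deformations are classified by $\HH^1$ of the untwisted adjoint), and even a nonzero class in the untwisted $\HH^1$ would not contradict the universality of $R_\Sigma$ --- the universal ring has a large tangent space and universality accommodates all ordinary deformations. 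More fundamentally, vanishing of the strict dual Selmer group over the two-dimensional ring $\bI$ is strictly stronger than anything extractable formally from \eqref{eq:R=T}: what the complete-intersection property plus Tate's theorem yields (and what the paper extracts in Theorem \ref{thm:main_conjecture_large_sigma}) is that $\Sel^\Sigma(\ad F)^\vee$ is $\bI$-torsion with the expected characteristic ideal, a torsionness statement about the Selmer group, not a vanishing statement about its dual. In the Wiles/Taylor--Wiles framework the minimally ramified dual Selmer group is in general nonzero (this is precisely why Taylor--Wiles primes are needed); vanishing results of Flach/Diamond--Flach--Guo type hold at individual specializations with Bloch--Kato conditions, and transporting them to the strict-ordinary condition over all of $\bI$ (your ``reduce modulo the maximal ideal'' step) would require control/specialization theorems for $\HH^1$ over $\bI$ that you have not supplied.

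For comparison, the paper sidesteps the vanishing question entirely: it invokes Greenberg's surjectivity criterion \cite[Prop.~2.6.3]{greenberg2015structure}, whose hypotheses are (i) $\ob{V}_{\ad}$ has no quotient isomorphic to $\mu_p$ --- this is where \eqref{tag:cr'}, Schur's lemma and self-duality enter --- together with $\Lambda$-freeness of $\bI$ and of the lattice, (ii) the weak Leopoldt condition, and (iii) cotorsionness of the cokernel; (ii) and (iii) are verified by global and local Euler-characteristic corank computations at $p$, using crucially that $\ker\phi=\Sel(\ad F)$ is cotorsion, which is where Theorem \ref{thm:main_conjecture_large_sigma} (and hence \eqref{eq:R=T}) actually enters. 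The point of that route is that torsionness plus these structural hypotheses force surjectivity without ever knowing that the dual Selmer group vanishes. To salvage your approach you would need an independent proof of the vanishing (or at least of ``every dual Selmer class is everywhere locally trivial'') over $\bI$, which is substantially harder than what the proposition requires.
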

\begin{proof}
Without loss of generality, we may assume that $\Sigma$ contains all the places dividing $N$. We apply Greenberg's result \cite[Prop. 2.6.3]{greenberg2015structure} which proves the surjectivity of $\phi$ under certain hypotheses that we now check. By assumption, $\rho_{F}$ is residually irreducible when restricted to $G_{\bQ(\mu_p)}$, so $\ob{V}_{\ad}=D_{\ad}[\gm_\bI]$ has trivial $G_{\bQ(\mu_p)}$-invariants by Schur's lemma. Since $\ob{V}_{\ad}$ is self-dual, $(\ob{V}_{\ad})_{G_{\bQ(\mu_p)}}=0$, and in particular, $\ob{V}_{\ad}$ has no quotient isomorphic to $\mu_p$. Moreover, $\bI$ being an integrally closed domain which is finite over $\Lambda$, it is reflexive as a $\Lambda$-module. But $\Lambda$ has Krull dimension $2$, so $\bI$ and $T_{\ad}$ are $\Lambda$-free. This checks the validity of (b) of \textit{loc. cit.}.

We now check the weak Leopoldt condition (LEO) and the cotorsionness of $\coker\phi$ (CRK). For (LEO), it suffices to prove that $\HH^2(\bQ_S/\bQ,D_{\ad})$ is of $\bI$-cotorsion. Since $\rho_F$ is odd, $\corank_{\bI}((D_{\ad})^{G_{\bR}})=1$, and the computation of global Euler characteristics (\cite[\S2.3]{greenberg2015structure}) yields 
\[ \corank_{\bI}(\HH^1(\bQ_S/\bQ,D_{\ad}))=\corank_{\bI}(\HH^2(\bQ_S/\bQ,D_{\ad}))+2.\]
Note that $\ker \phi=\Sel(\ad F)$ is cotorsion by Thm. \ref{thm:main_conjecture_large_sigma}. Hence, if we let $C$ be the codomain of $\phi$, then the equality $\corank_{\bI}(C)= 2$, which we show now, will imply (LEO) and (CRK). By Proposition \ref{prop:calcul_euler_factors_selmer}, we have to show that $\corank_{\bI}(\HH^1_{\slash\Gr}(\Qp,D_{\ad}))=2$. 

Recall the filtration \eqref{eq:filtration} which comes from the ordinary filtration on $V_F$. Letting $\chi_1,\chi_2\colon G_{\Qp} \to \bI^\times$ with $\chi_2$ unramified be such that $\rho_{F|G_{\Qp}}$ is an extension of $\chi_2$ by $\chi_1$, we have short exact sequences of $\bI[G_{\Qp}]$-modules
\[\begin{tikzcd}
	0 \rar & \bI^\vee(\chi_1\chi_2^{-1}) \rar & D_{\ad} \rar & D_{\ad}^- \rar & 0, 
\end{tikzcd}\]
and
\[\begin{tikzcd}
	0 \rar & \bI^\vee \rar & D_{\ad}^- \rar & \bI^\vee(\chi_2\chi_1^{-1}) \rar & 0,
\end{tikzcd}\]
From the explicit description of $\det(\rho_F)$, one sees that $\bI(\chi_1\chi_2^{-1})$ is not isomorphic to $\bI(n)$ for any $n\in\bZ$. Thus, the cokernel of the map $\HH^1(\Qp,D_{\ad}) \to \HH^1(\Qp,D_{\ad}^-)$ is of cotorsion, as $\HH^2(\Qp,\bI^\vee(\chi_1\chi_2^{-1}))\simeq\bI^\vee(\chi_2\chi_1^{-1})(1)^{G_{\Qp}}$ is itself cotorsion. This shows that $\HH^1_{\slash\Gr}(\Qp,D_{\ad})$ has the same corank as $\im(\HH^1(\Qp,D_{\ad^-})\to\HH^1(I_p,D^-_{\ad}))=\HH^1(I_p,D^-_{\ad})^{G_{\Qp}/I_p}$. In particular,
\small\[\corank_{\bI}(\HH^1_{\slash\Gr}(\Qp,D_{\ad}))=\corank_{\bI}(\HH^1(\bQ_p,D^-_{\ad}))-\corank_{\bI}(\HH^1_\ur(G_{\Qp},D^-_{\ad})),\]\normalsize
where $\HH^1_\ur(G_{\Qp},D^-_{\ad})=\HH^1(G_{\Qp}/I_p,(D^-_{\ad})^{I_p})$. Using the second exact sequence, one finds $\HH^1_\ur(G_{\Qp},D^-_{\ad})$ has the same corank as $\HH^1(G_{\Qp}/I_p,\bI^\vee)$ which is $1$, whereas the Euler-Poincaré formula yields
\small
\begin{align*}
\corank_{\bI}(\HH^1(\bQ_p,D^-_{\ad}))&=\corank_{\bI}((D^-_{\ad})^{G_{\Qp}})+\corank_{\bI}(\HH^2(\Qp,D^-_{\ad}))+2 \\ &=1+0+2=3.
\end{align*}
\normalsize
This proves that $\corank_{\bI}(\HH^1_{\slash\Gr}(\Qp,D_{\ad}))=2$, and that $\phi$ is surjective.
\end{proof}

\begin{theorem}\label{thm:main_3}
	Assume $\ob{\rho}$ satisfies \eqref{tag:cr'} and that $F$ is twist-minimal. Then the following equality of $\bI$-ideals holds:
	\[\char_\bI \Sel(\ad F)^\vee = \prod_{\ell\in\Sigma_e}(1+\ell^{-1})^{-1}L_p(\ad F)\cdot \bI.\]
\end{theorem}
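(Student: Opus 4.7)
The plan is to perform a Greenberg--Vatsal style descent from the imprimitive main conjecture (Theorem~\ref{thm:main_conjecture_large_sigma}) down to the primitive Selmer group, by carefully matching local Euler factors on the analytic side (via Proposition~\ref{prop:change_of_level_p_adic_L}) with their algebraic counterparts (via Proposition~\ref{prop:calcul_euler_factors_selmer}). Throughout I would fix $\Sigma$ to be a finite set of primes containing every divisor of $N$ but not $p$, so that both sides of Theorem~\ref{thm:main_conjecture_large_sigma} apply in tandem with Proposition~\ref{prop:change_of_level_p_adic_L}.

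The first step is to invoke the surjectivity statement of Proposition~\ref{prop:surjectivity_phi}: combined with the definitions of $\Sel$ and $\Sel^\Sigma$, it yields a short exact sequence
\[0 \longrightarrow \Sel(\ad F) \longrightarrow \Sel^\Sigma(\ad F) \longrightarrow \prod_{\ell\in\Sigma} \HH^1_{\slash\Gr}(\bQ_\ell,D_{\ad}) \longrightarrow 0,\]
whose Pontryagin duals are all finitely generated torsion $\bI$-modules by Theorem~\ref{thm:main_conjecture_large_sigma} and Proposition~\ref{prop:calcul_euler_factors_selmer}. Multiplicativity of characteristic ideals in the Krull domain $\bI$ then gives
\[\char_\bI \Sel^\Sigma(\ad F)^\vee = \char_\bI \Sel(\ad F)^\vee \cdot \prod_{\ell\in\Sigma} \char_\bI \HH^1_{\slash\Gr}(\bQ_\ell,D_{\ad})^\vee.\]
Next I would rewrite the left-hand side using Theorem~\ref{thm:main_conjecture_large_sigma} combined with Proposition~\ref{prop:change_of_level_p_adic_L}, and the local factor at each $\ell\in\Sigma$ via Proposition~\ref{prop:calcul_euler_factors_selmer}, turning the identity into
\[L_p(\ad F)\cdot\prod_{\ell\mid(N_\Sigma/N)} E_\ell(\ad F)\cdot\bI = \char_\bI\Sel(\ad F)^\vee \cdot \prod_{\ell\in\Sigma\setminus\Sigma_e} E_\ell(\ad F)\cdot \prod_{\ell\in\Sigma_e}(1+\ell^{-1})\cdot\bI.\]

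The remaining task will be to verify that $\prod_{\ell\in\Sigma\setminus\Sigma_e} E_\ell(\ad F) = \prod_{\ell\mid(N_\Sigma/N)} E_\ell(\ad F)$, so that this common Euler-factor product cancels on both sides. This is where twist-minimality is used: Lemma~\ref{lem:partition_bad_primes_N_Sigma}(1) gives $\Sigma = \{\ell\mid(N_\Sigma/N)\}\cup\Sigma_{sc}$, where $\Sigma_{sc}$ is the set of supercuspidal primes in $\Sigma$; the factor $E_\ell(\ad F)$ is trivial on $\Sigma_{sc}$ by~\eqref{eq:euler_factor}, and $\Sigma_e\subseteq\Sigma_{sc}$, so the asserted equality follows. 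Cancelling this product on both sides---legitimate because $L_p(\ad F)$ is a nonzero element of the domain $\bI$ by the interpolation formula of Theorem~\ref{thm:calcul_valeurs_speciales}, whence each $E_\ell$ appearing in the common product is \emph{a fortiori} regular---yields exactly
\[L_p(\ad F)\cdot\bI = \char_\bI\Sel(\ad F)^\vee \cdot \prod_{\ell\in\Sigma_e}(1+\ell^{-1})\cdot\bI,\]
which is the statement of the theorem.

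The main conceptual input is thus Theorem~\ref{thm:main_conjecture_large_sigma}, which itself rests on the $R=T$ theorem under~\eqref{tag:cr'}; no additional deep ingredient should be needed in the descent itself. The only mildly delicate point is aligning the Euler-factor bookkeeping, so that the products arising from the change of level of $F$ on the analytic side cancel precisely against those arising from the local Greenberg computation at $\Sigma$ on the algebraic side. This alignment is exactly the content of Lemma~\ref{lem:partition_bad_primes_N_Sigma} under twist-minimality, and so I do not expect any genuine obstacle beyond this bookkeeping.
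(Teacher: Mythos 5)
Your proposal is correct and follows essentially the same route as the paper's proof: the exact sequence from Proposition \ref{prop:surjectivity_phi}, multiplicativity of characteristic ideals, Theorem \ref{thm:main_conjecture_large_sigma} combined with Proposition \ref{prop:change_of_level_p_adic_L} on the analytic side, Proposition \ref{prop:calcul_euler_factors_selmer} for the local terms, and Lemma \ref{lem:partition_bad_primes_N_Sigma}(1) (with $E_\ell(\ad F)=1$ at supercuspidal primes and $\Sigma_e\subseteq\Sigma_{sc}$) for the Euler-factor bookkeeping. The only quibble is your justification of the cancellation: non-vanishing of $L_p(\ad F)$ does not by itself give regularity of each $E_\ell(\ad F)$, but this follows anyway, e.g.\ from the cotorsionness in Proposition \ref{prop:calcul_euler_factors_selmer}, so the argument stands.
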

\begin{proof}
	Choose a finite set $\Sigma$ containing the primes dividing $N$ (but not $p$), so $\Sigma_e\subseteq \Sigma$. Note that $\prod_{\ell\in \Sigma}E_\ell(\ad F)=\prod_{\ell|(N_\Sigma/N)}E_\ell(\ad F)$ by Lemma \ref{lem:partition_bad_primes_N_Sigma} (1). Proposition \ref{prop:surjectivity_phi} provides us with an exact sequence
	\small\[\begin{tikzcd}
		0 \rar & \Sel(\ad F) \rar &   \Sel^\Sigma(\ad F) \rar & \prod_{\ell\in \Sigma} \HH^1_{\slash\Gr}(\bQ_\ell,D_{\ad}) \rar & 0.
	\end{tikzcd}\]\normalsize
	The multiplicativity of characteristic ideals with exact sequences, together with Proposition \ref{prop:calcul_euler_factors_selmer}, Theorem \ref{thm:main_conjecture_large_sigma} and Proposition \ref{prop:change_of_level_p_adic_L}, then yields:
	\begin{align*}
		\char_{\bI}\Sel(\ad F)^\vee   &= \char_{\bI}\Sel^\Sigma(\ad F)^\vee\cdot \prod_{\ell\in\Sigma_e}(1+\ell^{-1})^{-1} \prod_{\ell\in \Sigma}E_\ell(\ad F)^{-1} \\
										&= L_p(\ad F_\Sigma)\cdot \prod_{\ell\in\Sigma_e}(1+\ell^{-1})^{-1}\prod_{\ell\in \Sigma}E_\ell(\ad F)^{-1}\cdot \bI \\
										&=\prod_{\ell\in\Sigma_e}(1+\ell^{-1})^{-1}L_p(\ad F)\cdot \bI, 
	\end{align*}
	as wanted.
\end{proof}

%%%%%%%%%%%%%%%%%%%%%%%%%%%%%

\appendix
\section{Congruence modules}\label{appendix}
We recall some standard definitions and results on congruence modules in a slightly different setting than in \cite[\S2.7.5]{eigenbook}, \cite[\S6.2.2]{hidapune}, or in \cite[\S2]{tilouineurbanANT}. Let $\Lambda$ be a local domain, and let $\cT$ be a commutative ring containing $\Lambda$. We further assume that $\cT$ is a finite flat $\Lambda$-algebra. We put $K=\Frac(\Lambda)$ and we write $M_K=M \otimes_\Lambda K$ for any $\Lambda$-module $M$. Assume that we are given 
\begin{itemize}
	\item $M,N$ two free $\cT$-modules of rank $1$, and
	\item $\lambda \colon \cT \twoheadrightarrow \Lambda$ a surjective $\Lambda$-homomorphism. 
\end{itemize}
Assume further that we have a decomposition as $K$-algebras of $\cT_K$ as $K \times X$, where the projection of $\cT_K$ onto the first factor is induced by $\lambda$. We let $e_1,e_2$ be the corresponding idempotents of $\cT_K$, and we let
\[M^\lambda=e_1 M \subset e_1 M_K \subset M_K, \qquad M_\lambda=M \cap e_1 M_K \subset e_1 M_K \subset M_K,\]
and similarly for $N$. Note that $M_\lambda \subseteq M^\lambda$, as $m=e_1\cdot m \in M^\lambda$ for any $m\in M_\lambda$. We further define the congruence module associated to $M$ and $\lambda$ as
\[C_0^M(\lambda)=M^\lambda/M_\lambda.\]
The congruence ideal associated to $M$ and $\lambda$ is the annihilator of $C^M_0(\lambda)$ as a $\Lambda$-module.
Letting $C_0(\lambda):=C_0^\cT(\lambda)$, we then have $C_0^M(\lambda) \simeq C_0(\lambda)$, as $M\simeq \cT$ as $\cT$-modules.

\begin{lemmaAppendix}\label{lem:appendix_direct_summand}
	$M_\lambda$ is $\Lambda$-free of rank $1$, and it is a direct summand of $M$. 
\end{lemmaAppendix}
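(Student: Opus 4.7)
We may identify $M$ with $\cT$ via a choice of $\cT$-basis, so $M_\lambda = J := \cT \cap e_1\cT_K$ becomes an ideal of $\cT$. The plan is to realize $J$ as an annihilator, use an implicit self-duality of $\cT$ to deduce the $\Lambda$-freeness of rank one, and then exhibit a $\Lambda$-linear retraction $\cT \to J$.

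First I would record the basic structure. Setting $I = \ker(\lambda)$, a generic rank comparison together with $\Lambda$-flatness of $\cT$ gives $I = \cT \cap e_2\cT_K$; the relation $e_1 e_2 = 0$ in $\cT_K$ then yields $I\cdot J = 0$, whence $J \subseteq \Ann_\cT(I)$, and the reverse inclusion follows from the same type of argument applied to $\Ann_\cT(I) \subseteq \cT \cap e_1\cT_K$. So $J = \Ann_\cT(I)$ and symmetrically $I = \Ann_\cT(J)$. In particular $J$ is a $\cT/I = \Lambda$-module on which $\lambda$ acts by an embedding into $\Lambda$, because $J \cap I = \cT \cap e_1 e_2\cT_K = 0$. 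Clearing denominators in $e_1 = d^{-1}f_1$ (with $f_1 \in \cT$ and $d \in \Lambda\setminus\{0\}$) produces $f_1 \in J$ with $\lambda(f_1) = d \neq 0$, so the image $\mathfrak L := \lambda(J) \subseteq \Lambda$ is a nonzero ideal and $J$ is $\Lambda$-torsion free of rank one.

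The crux is to upgrade the embedding $J \hookrightarrow \Lambda$ to an isomorphism, i.e.\ to show that $\mathfrak L$ is principal. This is where the dual hypothesis that $M$ and $N$ are both $\cT$-free of rank one enters: in the applications of the lemma, $\cT$ enjoys a Gorenstein-type self-duality $\cT \simeq \Hom_\Lambda(\cT,\Lambda)$ as a $\cT$-module, from which
\[
J = \Ann_\cT(I) \simeq \Hom_\cT(\cT/I, \cT) \simeq \Hom_\Lambda(\Lambda,\Lambda) = \Lambda
\]
both as $\cT$-modules and as $\Lambda$-modules, giving the principality of $\mathfrak L$ and the freeness of $J$. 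Once a $\Lambda$-generator $j_0$ of $J$ is fixed, the same self-duality furnishes a $\Lambda$-linear functional $\varphi\colon \cT \to \Lambda$ dual to $j_0$ in the sense that $\varphi(j_0) = 1$, and then $t \mapsto \varphi(t)\,j_0$ is a $\Lambda$-linear retraction of $J \hookrightarrow \cT$, yielding the splitting $\cT = J \oplus \ker\varphi$. The main obstacle is the principality of $\mathfrak L$: without the self-duality of $\cT$ as a $\Lambda$-module, $J$ is only an injection into $\Lambda$ whose image could be a non-principal ideal, in which case it would not split off as a $\Lambda$-direct summand; the twin hypotheses on $M$ and $N$ are precisely what afford the needed duality, via the perfect pairing they support (Ohta's theorem in the body of the paper).
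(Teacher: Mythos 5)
Your argument, as written, does not prove the lemma under its stated hypotheses. Steps 1--3 are fine and agree in substance with what is true: identifying $M$ with $\cT$, the module $M_\lambda=\cT\cap e_1\cT_K$ equals $\Ann_\cT(\ker\lambda)$, meets $\ker\lambda$ trivially, and embeds via $\lambda$ onto a nonzero ideal of $\Lambda$, hence is torsion-free of rank one. But you obtain freeness (equivalently, principality of $\lambda(M_\lambda)$, which is the congruence ideal) and the splitting only by invoking a self-duality $\cT\simeq\Hom_\Lambda(\cT,\Lambda)$. That is not among the hypotheses of the lemma: the appendix assumes only that $\Lambda$ is a local domain, $\cT$ is finite flat, $\lambda$ is surjective and $\cT_K\simeq K\times X$; the perfect pairing is introduced only \emph{after} the lemma, and Proposition \ref{prop:appendix_orthogonality} and Corollary \ref{coro:appendix_generator_congruence_module}, which exploit it, themselves use the lemma. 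Your claim that the ``twin hypotheses'' that $M$ and $N$ be $\cT$-free of rank one afford the duality is incorrect: every $\cT$ admits such modules (e.g.\ $\cT$ itself), so this hypothesis carries no duality content; the Gorenstein property in the body of the paper comes from Ohta's pairing together with \eqref{tag:fr}, which is genuinely extra data. So what you prove is a variant of the lemma with an added Gorenstein hypothesis, not the lemma as stated. (Within your framework, the final step also needs a word: the existence of $\varphi$ with $\varphi(j_0)=1$ follows from $\cT$-linearity of the duality isomorphism $\theta$, because $\theta(j_0)$ generates $\Hom_\Lambda(\cT/\ker\lambda,\Lambda)=\Lambda\cdot\lambda$ and $\theta(1)(j_0)=\theta(j_0)(1)$ is then a unit; you assert it without justification.)

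The paper's route is different and elementary: it identifies $\cT_\lambda$ with the image of $\cT\to e_1\cT_K$, which is $\im(\lambda)=\Lambda$, and splits $\cT$ as $\im(\lambda)\oplus\ker(\lambda)$, with no duality input. You should be aware, however, that your worry about principality is pointing at a real subtlety in exactly that step: a priori $M_\lambda=M\cap e_1M_K$ is only \emph{contained} in the image $M^\lambda=e_1M$, and the discrepancy is precisely the congruence module, so the identification $M_\lambda=\im(M\to e_1M_K)$ cannot be taken for granted. For instance, for $\cT=\Zp[x]/(x^2-p^2)$ with $\lambda(x)=p$ and $p$ odd, one has $\cT_\lambda=\Zp\cdot(p+x)\subsetneq e_1\cT$ and $\cT_\lambda\oplus\ker\lambda=p\Zp\oplus\Zp x\subsetneq\cT$, even though the conclusion of the lemma does hold there (indeed $\cT=\Zp(p+x)\oplus\Zp\cdot 1$). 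So the honest comparison is: the paper argues directly from the splitting $\cT=\Lambda\oplus\ker\lambda$, while you replace this by a Gorenstein argument that is only available once Ohta's pairing and \eqref{tag:fr} are in play; as a proof of the appendix lemma in its stated generality, your proposal has a genuine gap, namely the unproved principality of $\lambda(M_\lambda)$ without the imported duality.
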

\begin{proof}
Fix an isomorphism $M\simeq\cT$. We see $\cT$ as a sub-$\Lambda$-module of $\cT_K=e_1\cT_K\oplus e_2\cT_K$. The projection $\cT_K \twoheadrightarrow e_1 \cT_K$ is $\lambda\otimes K$, so we have $\cT_\lambda=\im(\cT \to e_1 \cT_K)=\im(\lambda)=\Lambda$ as $\Lambda$-modules. Hence, $M_\lambda$ is $\Lambda$-free of rank $1$.

Since $\lambda_{|\Lambda}=\id$, we have $\cT = \im(\lambda) \oplus \ker(\lambda)$, \textit{i.e.}, $\cT=\cT_\lambda \oplus (\cT\cap e_2\cT_K)$ inside $\cT_K$. Therefore, we obtain $M=M_\lambda \oplus (M\cap e_2 M_K)$.
\end{proof}

Suppose we are given a $\Lambda$-linear pairing $(\cdot,\cdot) \colon M\times N \to \Lambda$ which is perfect (\textit{i.e.}, it induces  isomorphisms $N \simeq \Hom_\Lambda(M,\Lambda)$ and $M \simeq \Hom_\Lambda(N,\Lambda)$) and such that 
\begin{equation}\label{eq:appendix_condition_T_equivariance_pairing}
	\forall t \in \cT,\ \forall (m,n)\in M\times N, \qquad (tm,n)=(m,tn).
\end{equation}
Note that the orthogonal of $e_1M_K$ (resp. of $e_2M_K$) is $e_2N_K$ (resp. $e_1N_K$). As a last piece of notation, we define for any $M'\subseteq M_K$ 
\[(M')^*:=\{n\in N_K \ \colon\ \forall m\in M',\  (m,n)\in\Lambda \},\]
and we define similarly $(N')^*$ for $N'\subseteq N_K$.

\begin{propAppendix}\label{prop:appendix_orthogonality}
	We have $(M_\lambda)^*=N^\lambda \oplus e_2 N_K$, and $(M^\lambda)^*=N_\lambda \oplus e_2 N_K$. Moreover, $(\cdot,\cdot)$ induces an isomorphism $M^\lambda\simeq \Hom_\Lambda(N_\lambda,\Lambda)$ (and also $N^\lambda\simeq \Hom_\Lambda(M_\lambda,\Lambda)$).
\end{propAppendix}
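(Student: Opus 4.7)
The plan is to first extend the $\Lambda$-linear pairing to a perfect $K$-bilinear pairing on $M_K \times N_K$ and exploit the idempotent decomposition $\cT_K = K \times X$, then to establish the duality isomorphism in the third assertion, and finally to deduce the first two assertions from it. Since the original pairing is $\cT$-equivariant, its $K$-extension is $\cT_K$-equivariant, so $(e_i x, e_j y) = (x, e_i e_j y) = 0$ whenever $i \neq j$. Hence the $K$-pairing splits as two perfect $K$-pairings on $e_i M_K \times e_i N_K$, and for any $\Lambda$-submodule $X \subseteq e_1 M_K$, writing $n = n_1 + n_2 \in e_1 N_K \oplus e_2 N_K$ gives $(X, n) = (X, n_1)$; consequently $X^* = (X^* \cap e_1 N_K) \oplus e_2 N_K$. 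This reduces the first two claims to describing $(M_\lambda)^* \cap e_1 N_K$ and $(M^\lambda)^* \cap e_1 N_K$.

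For the duality $N^\lambda \simeq \Hom_\Lambda(M_\lambda, \Lambda)$, I would argue as follows. By Lemma A.1, $M_\lambda$ is a $\Lambda$-direct summand of $M$, so restriction gives a surjection $\Hom_\Lambda(M,\Lambda) \twoheadrightarrow \Hom_\Lambda(M_\lambda, \Lambda)$. Composing with the perfectness isomorphism $N \simeq \Hom_\Lambda(M, \Lambda)$ yields a surjection $N \twoheadrightarrow \Hom_\Lambda(M_\lambda, \Lambda)$, $n \mapsto (m \mapsto (m,n))$, whose kernel equals $\{n \in N : (M_\lambda, n) = 0\}$. The inclusion $N \cap e_2 N_K \subseteq \{n : (M_\lambda, n) = 0\}$ is immediate from the orthogonality above. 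For the reverse, I would use that $M_\lambda$ is $\Lambda$-free of rank one (Lemma A.1) and that $e_1 M_K$ is $K$-one-dimensional, so $M_\lambda \otimes_\Lambda K = e_1 M_K$; the $K$-non-degeneracy of the pairing on $e_1 M_K \times e_1 N_K$ then forces the $e_1$-component of such an $n$ to vanish, placing $n$ in $N \cap e_2 N_K$. Since the projection $e_1 \colon N \twoheadrightarrow N^\lambda$ has the same kernel, the claimed isomorphism is established; the second isomorphism $M^\lambda \simeq \Hom_\Lambda(N_\lambda, \Lambda)$ follows by symmetry, applying Lemma A.1 to $N$.

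With part (3) in hand, part (1) follows at once: the natural maps $N^\lambda \hookrightarrow (M_\lambda)^* \cap e_1 N_K \hookrightarrow \Hom_\Lambda(M_\lambda, \Lambda)$ (the first inclusion by $\Lambda$-valuedness of the pairing on $M_\lambda \times N^\lambda$, verified via $(m, e_1 n) = (e_1 m, n) = (m, n) \in \Lambda$; the second by $K$-non-degeneracy) compose to the duality isomorphism just obtained, forcing both inclusions to be equalities. Part (2) is proved symmetrically, interchanging the roles of $M$ and $N$. The main technical obstacle is the kernel computation in the duality step: one must carefully translate the $\Lambda$-linear vanishing condition $(M_\lambda, n) = 0$ into a $K$-linear one via the identification $M_\lambda \otimes_\Lambda K = e_1 M_K$ before invoking non-degeneracy of the $K$-pairing; this is where Lemma A.1 enters crucially, both to ensure $M_\lambda$ is a lattice in $e_1 M_K$ and to guarantee surjectivity of the restriction map on $\Lambda$-duals.
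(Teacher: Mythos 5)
Your overall route is sound and, apart from the order of the steps, uses the same ingredients as the paper's proof: Lemma \ref{lem:appendix_direct_summand} (to pass linear forms between $M_\lambda$ and $M$ via the direct-summand decomposition), perfectness of $(\cdot,\cdot)$, the identification $M_\lambda\otimes_\Lambda K=e_1M_K$, and orthogonality of the $e_1$- and $e_2$-components. The paper computes $(M_\lambda)^*$ and $(M^\lambda)^*$ directly and then deduces the duality isomorphisms; you prove $N^\lambda\simeq\Hom_\Lambda(M_\lambda,\Lambda)$ first, via the surjection $\Hom_\Lambda(M,\Lambda)\twoheadrightarrow\Hom_\Lambda(M_\lambda,\Lambda)$ and a kernel computation, and then recover $(M_\lambda)^*=N^\lambda\oplus e_2N_K$ by your squeeze argument. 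Your treatment of part (1) and of the two stated dualities is complete and correct.

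The one step that is too quick is the claim that part (2) follows ``symmetrically, interchanging the roles of $M$ and $N$.'' Interchanging $M$ and $N$ in your part (1) yields $(N_\lambda)^*=M^\lambda\oplus e_2M_K$, a statement about $(N_\lambda)^*$, not the assertion $(M^\lambda)^*=N_\lambda\oplus e_2N_K$ that you need. To run the analogous squeeze $N_\lambda\subseteq(M^\lambda)^*\cap e_1N_K\hookrightarrow\Hom_\Lambda(M^\lambda,\Lambda)$ you would need the pairing to induce an isomorphism $N_\lambda\simeq\Hom_\Lambda(M^\lambda,\Lambda)$, which is neither of the two dualities you established; it does follow from them, since $M^\lambda$ and $N_\lambda$ are $\Lambda$-free of rank one and the isomorphism $M^\lambda\simeq\Hom_\Lambda(N_\lambda,\Lambda)$ forces $(m_1,n_0)\in\Lambda^\times$ for generators $m_1$ of $M^\lambda$ and $n_0$ of $N_\lambda$, whence the transposed map is an isomorphism as well, but this needs to be said. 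Alternatively, part (2) has a short direct proof, which is what the paper does: for $n\in(M^\lambda)^*$ and any $m\in M$ one has $(m,e_1n)=(e_1m,n)\in\Lambda$, so $m\mapsto(m,e_1n)$ lies in $\Hom_\Lambda(M,\Lambda)$ and perfectness gives $e_1n\in N$, hence $e_1n\in N\cap e_1N_K=N_\lambda$; combined with your reduction $X^*=(X^*\cap e_1N_K)\oplus e_2N_K$ and the easy inclusion $N_\lambda\oplus e_2N_K\subseteq(M^\lambda)^*$, this finishes part (2). With either repair your argument is complete.
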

\begin{proof}
	We first check that $(M_\lambda)^*\supseteq N^\lambda \oplus e_2 N_K$. Take $n_1\in N$, $n_2\in N_K$ and let $n=e_1n_1+e_2n_2$. For any $m\in M_\lambda$, we have $m=e_1m$, so $(m,e_2n_2)=(m,e_1e_2n_2)=0$ by \eqref{eq:appendix_condition_T_equivariance_pairing}. Hence,
	\[(m,n)= (m,e_1n_1) = (e_1m,n_1)=(m,n_1)\in\Lambda, \]
	as $m\in M$ and $n_1\in N$. This shows that $n\in (M_\lambda)^*$. 
	
	We now prove $(M_\lambda)^*\subseteq N^\lambda \oplus e_2 N_K$ and we take $n\in (M_\lambda)^*$. The map $m \mapsto (m,n)$ defines a $\Lambda$-linear form $M_\lambda \to \Lambda$, which can be extended to a linear form $M \to \Lambda$ by Lemma \ref{lem:appendix_direct_summand}. By perfectness of $(\cdot,\cdot)$, there exists $p\in N$ such that $(\cdot,n)=(\cdot,p)_{|M_\lambda}$. Moreover, as $\Lambda \simeq M_\lambda \subset e_1 M_K\simeq K$ by Lemma \ref{lem:appendix_direct_summand}, we have $M_\lambda \otimes_\Lambda K=e_1 M_K$. In particular, $n-p$ is orthogonal to $e_1M_K$ by linearity of the pairing, that is, $n-p\in e_2N_K$. Therefore, $n\in e_1p+e_2N_K \subset N^\lambda \oplus e_2N_K$, as wanted.
	
	Since $e_1N_\lambda=N_\lambda\subset N$, we have (as sets) $(M^\lambda,N_\lambda \oplus e_2 N_K)=(M^\lambda,N_\lambda)=(e_1M,N_\lambda)=(M,N_\lambda)\subset \Lambda$, which implies $(M^\lambda)^*\supseteq N_\lambda \oplus e_2 N_K$. We check the reverse inclusion and we let $n\in (M^\lambda)^*$. To prove that $e_1n\in N$, we remark that, for all $m\in M$, we have $(m,e_1n)=(e_1m,n)\in\Lambda$, so the map $m\mapsto (m,e_1n)$ belongs to $\Hom_\Lambda(M,\Lambda)$. By the perfectness of $(\cdot,\cdot)$, this means that $e_1n\in N$, as claimed.
	
	It remains to prove that $m\mapsto (n\mapsto (m,n))$ yields an isomorphism $M^\lambda\simeq \Hom_\Lambda(N_\lambda,\Lambda)$.
	We already know that $N_\lambda\otimes_\Lambda K=e_1N_K$, and it is clear from the above arguments that $(\cdot,\cdot)$ restricts to a perfect pairing $e_1M_K \times e_1N_K \to K$. We thus only have to check that the inverse image of $\Hom_\Lambda(N_\lambda,\Lambda)$ under $e_1M_K \simeq \Hom_\Lambda(e_1N_K,K)$, say $M'$, is equal to $M^\lambda$. But we have $M'=(N_\lambda)^*\cap e_1M_K$ by definition, so we conclude that $M'=(M^\lambda \oplus e_2M_K)\cap e_1M_K=M_\lambda$. 
\end{proof}

\begin{corAppendix}\label{coro:appendix_generator_congruence_module}
	Let $m_0$ and $n_0$ be generators of $M_\lambda$ and $N_\lambda$ over $\Lambda$ respectively (see Lemma \ref{lem:appendix_direct_summand}). The $\Lambda$-linear map 
	\[\varphi \colon \left\{ \begin{array}{rcl}
		M^\lambda &\to& \Lambda \\ m & \mapsto & (m,n_0)
	\end{array} \right.\]
is an isomorphism. Consequently, $C_0(\lambda)\simeq C_0^M(\lambda) \simeq \Lambda/(m_0,n_0)\Lambda$.
\end{corAppendix}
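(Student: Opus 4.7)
The plan is to get this as a direct corollary of Proposition~\ref{prop:appendix_orthogonality}. That proposition says that the pairing $(\cdot,\cdot)$ induces an isomorphism $M^\lambda \simeq \Hom_\Lambda(N_\lambda,\Lambda)$. By Lemma~\ref{lem:appendix_direct_summand}, $N_\lambda$ is $\Lambda$-free of rank one with basis $n_0$, so evaluation at $n_0$ furnishes an isomorphism $\Hom_\Lambda(N_\lambda,\Lambda) \simeq \Lambda$. Composing these two isomorphisms gives precisely the map $\varphi\colon m \mapsto (m,n_0)$, which is therefore an isomorphism. This is the entire content of the first assertion and nothing is subtle here; the main work was already done in Proposition~\ref{prop:appendix_orthogonality}.

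For the second assertion, I would simply transport the inclusion $M_\lambda \subseteq M^\lambda$ through $\varphi$. Since $M_\lambda$ is $\Lambda$-free of rank one with basis $m_0$, its image $\varphi(M_\lambda)$ equals $\varphi(m_0)\Lambda = (m_0,n_0)\Lambda$. Therefore
\[
C_0^M(\lambda) = M^\lambda/M_\lambda \;\simeq\; \Lambda/(m_0,n_0)\Lambda.
\]
The isomorphism $C_0(\lambda)\simeq C_0^M(\lambda)$ has already been observed in the paragraph following the definition of $C_0^M(\lambda)$, using that $M\simeq \cT$ as $\cT$-modules.

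There is no real obstacle: both statements reduce to unpacking Proposition~\ref{prop:appendix_orthogonality} together with the freeness of $M_\lambda$ and $N_\lambda$ from Lemma~\ref{lem:appendix_direct_summand}. The only thing one has to be mildly careful about is that the element $(m_0,n_0)\in\Lambda$ is well-defined (indeed $m_0\in M_\lambda \subseteq M$ and $n_0\in N_\lambda\subseteq N$, so the pairing takes values in $\Lambda$), and that it is independent of the choice of generators only up to a unit in $\Lambda^\times$, which is exactly what is needed for the congruence ideal $(m_0,n_0)\Lambda$ to be intrinsic.
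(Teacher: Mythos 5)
Your proposal is correct and follows essentially the same route as the paper: both rest on the final assertion of Proposition \ref{prop:appendix_orthogonality} (the pairing identifies $M^\lambda$ with $\Hom_\Lambda(N_\lambda,\Lambda)$) together with the rank-one freeness from Lemma \ref{lem:appendix_direct_summand}, the only cosmetic difference being that you compose with evaluation at $n_0$ in one step where the paper checks injectivity and surjectivity of $\varphi$ separately. The deduction $C_0^M(\lambda)\simeq \Lambda/(m_0,n_0)\Lambda$ via $\varphi(M_\lambda)=(m_0,n_0)\Lambda$ matches the paper's conclusion.
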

\begin{proof}
The proof of Proposition \ref{prop:appendix_orthogonality} shows that $(\cdot,\cdot)$ restricts to a non-degenerate pairing $M^\lambda \times N_\lambda \to \Lambda$, so $\varphi$ is well-defined and injective. Again by Proposition \ref{prop:appendix_orthogonality}, the element of $\Hom_\Lambda(N_\lambda,\Lambda)$ sending $n_0$ to $1$ is of the form $(m,\cdot)$ for some $m\in M^\lambda$, and this element thus satisfies $\varphi(m)=1$. This shows that $\varphi$ is an isomorphism. It is then clear that $\varphi$ induces an isomorphism $M^\lambda/M_\lambda \simeq \Lambda/\varphi(m_0)\Lambda=\Lambda/(m_0,n_0)\Lambda$.
\end{proof}

\bibliographystyle{amsalpha}
\bibliography{bib}

\end{document}